\documentclass[12pt]{amsart}

\usepackage[letterpaper,margin=1.1in]{geometry}
\usepackage{amssymb} % For \lesssim
\usepackage{graphicx}
\usepackage{xcolor}

% Editing commands

 %improved readability

% Theorem Environments

\newtheorem{theorem}{Theorem}[section]
\newtheorem{lemma}[theorem]{Lemma}
\newtheorem{proposition}[theorem]{Proposition}

\theoremstyle{definition}
\newtheorem{definition}[theorem]{Definition}

\newtheorem{problem}[theorem]{Problem}

\theoremstyle{remark}
\newtheorem{remark}[theorem]{Remark}
\newtheorem{example}[theorem]{Example}
\newtheorem*{remark-no-number}{Remark}

% Some Miscellaneous Symbols
\newcommand{\RR}{\mathbb{R}}

\newcommand{\XX}{\mathbb{X}}
\newcommand{\Haus}{\mathcal{H}}

\newcommand{\tJ}{\widetilde{J}}

% Some Miscellaneous Operators
\newcommand{\dist}{\mathop\mathrm{dist}\nolimits}
\newcommand{\diam}{\mathop\mathrm{diam}\nolimits}
\newcommand {\side}{\mathop\mathrm{side}\nolimits}
\newcommand{\lD}[1]{\mathop{\underline{D}^{#1}}\nolimits}
\newcommand{\uD}[1]{\mathop{\overline{D}^{\,#1}}\nolimits}
\newcommand{\spt}{\mathop\mathrm{spt}\nolimits}

\newcommand{\res}{\hbox{ {\vrule height .22cm}{\leaders\hrule\hskip.2cm} }} % Measure Restriction Symbol

\newcommand{\rect}{{rect}}
\newcommand{\pu}{{pu}}

\usepackage{mathrsfs}

\numberwithin{figure}{section}
\numberwithin{equation}{section}

\begin{document}

\title[Generalized rectifiability of measures]{Generalized rectifiability of measures and\\ the identification problem}
\thanks{The author was partially supported by NSF DMS grants 1500382 and 1650546.}
\date{January 20, 2019}
\subjclass[2010]{Primary 28A75. Secondary 26A16, 42B99, 54F50}
\keywords{structure of measures, atoms, generalized rectifiability, fractional rectifiability, density ratios, flatness,  geometric square functions}
\author{Matthew Badger}
\address{Department of Mathematics\\ University of Connecticut\\ Storrs, CT 06269-3009}
\email{matthew.badger@uconn.edu}

\begin{abstract}One goal of geometric measure theory is to understand how measures in the plane or a higher dimensional Euclidean space  interact with families of lower dimensional sets. An important dichotomy arises between the class of rectifiable measures, which give full measure to a countable union of the lower dimensional sets, and the class of purely unrectifiable measures, which assign measure zero to each distinguished set. There are several commonly used definitions of rectifiable and purely unrectifiable measures in the literature (using different families of lower dimensional sets such as Lipschitz images of subspaces or Lipschitz graphs), but all of them can be encoded using the same framework. In this paper, we describe a framework for generalized rectifiability, review a selection of classical results on rectifiable measures in this context, and survey recent advances on the identification problem for Radon measures that are carried by Lipschitz or H\"older or $C^{1,\alpha}$ images of Euclidean subspaces, including theorems of Azzam-Tolsa, Badger-Schul, Badger-Vellis, Edelen-Naber-Valtorta, Ghinassi, and Tolsa-Toro.

This survey paper is based on a talk at the Northeast Analysis Network Conference held in Syracuse, New York in September 2017.
\end{abstract}

\maketitle

\setcounter{tocdepth}{1}
\tableofcontents

\section{Introduction}

Given a measure, perhaps one of the most fundamental problems is to determine which sets have positive measure and which sets have zero measure. In this paper, we are interested in a \emph{dual problem}: given a class of sets, we want to determine which measures assign all of their mass to those sets and which measures vanish on each of those sets. Special cases of the dual problem are commonly studied in geometric measure theory, under the heading of rectifiability of measures. To formally state this (see Problem \ref{prob:ident}), we need to first introduce some terminology (see Definition \ref{def:carry}), which seems to be missing from the standard lexicon.
Recall that a \emph{measurable space} $(\mathbb{X},\mathcal{M})$ is a nonempty set $\mathbb{X}$, equipped with a $\sigma$-algebra $\mathcal{M}$, i.e.~a nonempty family of subsets of $\XX$ that is closed under taking complements and countable unions. By \emph{measure}, we mean a positive measure, i.e.~a function $\mu:\mathcal{M}\rightarrow[0,\infty]$ with $\mu(\emptyset)=0$ that is countably additive on disjoint sets.

\begin{definition}\label{def:carry} Let $(\mathbb{X},\mathcal{M})$ be a measurable space, let $\mu$ be a measure defined on $\mathcal{M}$, and let $\mathcal{N}\subseteq\mathcal{M}$ be a family of measurable sets. We say that \begin{enumerate}
\item \emph{$\mu$ is carried by $\mathcal{N}$} if there exist countably many $N_i\in\mathcal{N}$ such that $\mu(\mathbb{X}\setminus\bigcup_i N_i)=0$;
\item \emph{$\mu$ is singular to $\mathcal{N}$} if $\mu(N)=0$ for every $N\in\mathcal{N}$.
\end{enumerate}
\end{definition}

In a measurable metric space $(\mathbb{X},\mathcal{M})$ such that  $\mathcal{M}$ contains the Borel $\sigma$-algebra $\mathcal{B}_\XX$, the \emph{support} of a measure $\mu$ defined on $\mathcal{M}$ is the closed set defined by $$\spt\mu:=\{x\in\XX:\mu(B(x,r))>0\text{ for all }r>0\}.$$ Equivalently, the support of $\mu$ is the smallest closed set $F\subseteq\XX$ such that $\mu(\XX\setminus F)=0$.

\begin{figure}\includegraphics[width=.8\textwidth]{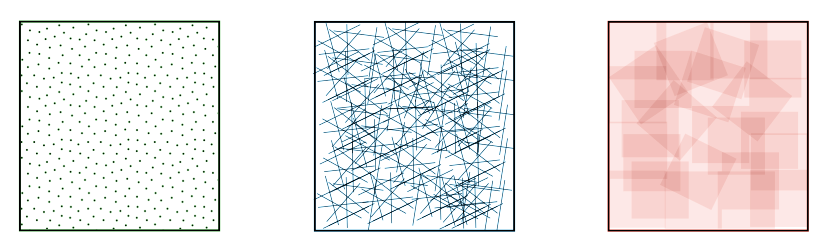}
\caption{Three measures in $\RR^2$ carried by points, line segments, and squares} \label{fig:3m}\end{figure}

\begin{example}\label{ex:3} Let $\{x_i\}_1^\infty$, $\{I_i\}_1^\infty$, $\{S_i\}_{1}^\infty$ be dense sequences of points, unit line segements, and unit squares in $\RR^2$ (see Figure \ref{fig:3m}), meaning that $\overline{\bigcup_{i=1}^\infty \{x_i\}}=\overline{\bigcup_{i=1}^\infty I_i}=\overline{\bigcup_{i=1}^\infty S_i}=\RR^2,$ and let $a_i>0$ be a sequence of weights such that $\sum_1^\infty a_i=1$. Define three Borel measures $\mu_0$, $\mu_1$, and $\mu_2$  on $\RR^2$ by $$\mu_0:=\sum_{1}^\infty a_i \delta_{x_i},\qquad \mu_1:=\sum_1^\infty a_i \mathscr{L}^1\res I_i,\qquad \mu_2:=\sum_1^\infty a_i \mathscr{L}^2\res S_i,$$ where $\delta_{x_i}$ is the Dirac measure at $x_i$, $\mathscr{L}^1\res I_i$ is 1-dimensional Lebesgue measure restricted to $I_i$, and $\mathscr{L}^2\res S_i$ is 2-dimensional Lebesgue measure restricted to  $S_i$. The measures $\mu_0$, $\mu_1$, and $\mu_2$ are probability measures on $\RR^2$ with common support, $$\spt\mu_0=\spt\mu_1=\spt\mu_2=\RR^2.$$ However, $\mu_0$ is carried by $0$-dimensional sets (points), $\mu_1$ is carried by $1$-dimensional sets (line segements), and $\mu_2$ is carried by $2$-dimensional sets (squares). On the other hand, $\mu_1$ and $\mu_2$ are singular to points and $\mu_2$ is singular to lines. Thus, the three measures are distinguished by their underlying carrying sets. This shows that \emph{the support of a measure is a rough approximation that may hide underlying structure of a measure}. Definition \ref{def:carry} provides us with language to discuss this structure.
\end{example}

A validation of Definition \ref{def:carry} is that every $\sigma$-finite measure can be uniquely written as the sum of a measure carried by $\mathcal{N}$ and a measure singular to $\mathcal{N}$. In the statement of Proposition \ref{p:decomp} and below, we let $\mu\res A$ denote the \emph{restriction} of a measure $\mu$ to a measurable set $A$; i.e., $\mu\res A$ is the measure defined by the rule $(\mu\res A)(B)=\mu(A\cap B)$ for all measurable sets $B$. A measure is \emph{$\sigma$-finite} if $\mathbb{X}=\bigcup_{i=1}^\infty X_i$ for some sets $X_i\in \mathcal{M}$ with $\mu(X_i)<\infty$ for all $i$.

\begin{proposition}[decomposition of $\sigma$-finite measures] \label{p:decomp} Let $(\mathbb{X},\mathcal{M})$ be a measurable space and let $\mathcal{N}\subseteq\mathcal{M}$ be a family of measurable sets. If $\mu$ is a $\sigma$-finite measure defined on $\mathcal{M}$, then $\mu$ can be uniquely written as \begin{equation}\mu=\mu_\mathcal{N}+\mu_\mathcal{N}^\perp,\end{equation} where $\mu_\mathcal{N}$ and $\mu_{\mathcal{N}}^\perp$ are measures defined on $\mathcal{M}$ such that $\mu_\mathcal{N}$ is carried by $\mathcal{N}$ and $\mu_\mathcal{N}^\perp$ is singular to $\mathcal{N}$. Moreover, there exists $A_\mu\in\mathcal{M}$ (a countable union of sets in $\mathcal{N}$) such that \begin{equation}\mu_\mathcal{N}=\mu\res A_\mu,\qquad \mu_\mathcal{N}^\perp=\mu\res (\mathbb{X}\setminus A_\mu).\end{equation} The measure $\mu$ is carried by $\mathcal{N}$ if and only if $\mu=\mu_\mathcal{N}$; and, similarly, $\mu$ is singular to $\mathcal{N}$ if and only if $\mu=\mu_{\mathcal{N}}^\perp$.\end{proposition}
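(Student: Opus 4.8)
The plan is to produce $A_\mu$ as a countable union of sets from $\mathcal{N}$ that is \emph{maximal} in $\mu$-measure, and then simply set $\mu_{\mathcal{N}}:=\mu\res A_\mu$ and $\mu_{\mathcal{N}}^\perp:=\mu\res(\XX\setminus A_\mu)$. For a \emph{finite} measure the construction is immediate: let
$$ s := \sup\Big\{\mu\Big(\bigcup_i N_i\Big) : N_i\in\mathcal{N}\Big\} \le \mu(\XX) < \infty, $$
choose countable unions of sets in $\mathcal{N}$ whose $\mu$-measures increase to $s$, and let $A_\mu$ be the union of all of them; a countable union of countable unions of sets in $\mathcal{N}$ is again one, so $A_\mu$ is admissible and $\mu(A_\mu)=s$. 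I expect the main obstacle to be that for a merely $\sigma$-finite $\mu$ this supremum can equal $+\infty$, in which case the decisive maximality inequality $\mu(A_\mu\cup N)\le\mu(A_\mu)$ degenerates to $\infty\le\infty$ and says nothing.

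To get around this, I would fix a decomposition $\XX=\bigsqcup_j X_j$ into disjoint measurable sets with $\mu(X_j)<\infty$ and run the maximization above separately against each finite measure $\mu_j:=\mu\res X_j$, obtaining for each $j$ a maximizer $A_j$ (a countable union of sets in $\mathcal{N}$) with $\mu_j(A_j)=s_j<\infty$. Then $A_\mu:=\bigcup_j A_j$ is once more a countable union of sets in $\mathcal{N}$, and $\mu_{\mathcal{N}}=\mu\res A_\mu$ is carried by $\mathcal{N}$ by construction. For the singularity of $\mu_{\mathcal{N}}^\perp$, I would test maximality one piece at a time: given $N\in\mathcal{N}$, the set $A_j\cup N$ is a countable union of sets in $\mathcal{N}$, so $\mu_j(A_j\cup N)\le s_j=\mu_j(A_j)$ forces $\mu_j(N\setminus A_j)=0$, hence $\mu_j(N\setminus A_\mu)=0$; summing over $j$ gives $\mu_{\mathcal{N}}^\perp(N)=\mu(N\setminus A_\mu)=0$.

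For uniqueness, suppose $\mu=\nu_1+\nu_2$ with $\nu_1$ carried by $\mathcal{N}$ and $\nu_2$ singular to $\mathcal{N}$, and note that (being positive with $\nu_1+\nu_2=\mu$) each $\nu_k$ is dominated by $\mu$, i.e.\ $\nu_k(E)\le\mu(E)$ for all $E\in\mathcal{M}$. Since $A_\mu$ is a countable union of sets in $\mathcal{N}$ and $\nu_2$ is singular to $\mathcal{N}$, subadditivity gives $\nu_2(A_\mu)=0$. Since $\nu_1$ is carried by some $B=\bigcup_i N_i$, the same per-piece maximality argument (with $B$ in place of $N$) yields $\mu(B\setminus A_\mu)=0$, and then $\nu_1(\XX\setminus A_\mu)\le\nu_1(\XX\setminus B)+\mu(B\setminus A_\mu)=0$ using domination. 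Thus $\nu_1$ is concentrated on $A_\mu$ while $\nu_2$ vanishes there, so for every $E\in\mathcal{M}$ we get $\nu_1(E)=\nu_1(E\cap A_\mu)=\mu(E\cap A_\mu)=(\mu\res A_\mu)(E)$, whence $\nu_1=\mu_{\mathcal{N}}$ and $\nu_2=\mu-\nu_1=\mu_{\mathcal{N}}^\perp$. The two ``if and only if'' assertions then drop out of uniqueness: if $\mu$ is carried by $\mathcal{N}$, then $\mu=\mu+0$ is a valid decomposition (the zero measure being singular to $\mathcal{N}$), forcing $\mu=\mu_{\mathcal{N}}$; the singular case is symmetric.
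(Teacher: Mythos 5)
Your proposal is correct and follows essentially the same route as the paper: both produce $A_\mu$ as a countable union of sets in $\mathcal{N}$ via an exhaustion/maximality argument over the finite-measure pieces of a $\sigma$-finite decomposition, the only cosmetic difference being that you attain the supremum exactly (using that countable unions of countable unions of $\mathcal{N}$-sets are again such unions) where the paper takes near-maximizers within $1/j$. You also carry out the singularity and uniqueness verifications that the paper explicitly leaves to the reader, and your arguments for those steps are sound.
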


\begin{proof} This is an elementary exercise in measure theory. We present a portion of the proof in order to motivate the identification problem (see Problem \ref{prob:ident}). Assume that $\mathbb{X}=\bigcup_{i=1}^\infty X_i$ for some sequence $X_i\in\mathcal{M}$ of measurable sets with $\mu(X_i)<\infty$. Let $\widetilde{\mathcal{N}}$ denote the collection of finite unions of sets in $\mathcal{N}$, and for each $j\geq 1$, define $$\alpha_j:=\sup_{A\in\widetilde{\mathcal{N}}}\mu((X_1\cup\dots\cup X_j)\cap A) \leq \sum_{i=1}^j\mu(X_i)<\infty.$$ By the approximation property of supremum, for each $j\geq 1$, we can choose a set $A_j\in\widetilde{\mathcal{N}}$ such that $\mu((X_1\cup\dots\cup X_j)\cap A_j) \geq \alpha_j - 1/j$. We now define $$\mu_{\mathcal{N}}:=\mu\res \left(\bigcup_{j=1}^\infty A_j\right),\qquad \mu_{\mathcal{N}}^\perp:=\mu\res\left(\mathbb{X}\setminus \bigcup_{j=1}^\infty A_j\right).$$ Clearly $\mu=\mu_{\mathcal{N}}+\mu_{\mathcal{N}}^\perp$ and the measure $\mu_\mathcal{N}$ is carried by $\mathcal{N}$. We leave it to the reader to verify the measure $\mu_\mathcal{N}^\perp$ is singular to $\mathcal{N}$ and that the decomposition of $\mu$ as the sum of a measure carried by $\mathcal{N}$ and a measure singular to $\mathcal{N}$ is unique. A complete proof can be found in the appendix of \cite{BV}.\end{proof}

\begin{example}Let $(\mathbb{X},\mathcal{M})$ be a measurable space. Let $\nu$ be a measure defined on $\mathcal{M}$ and consider $\mathcal{N}:=\{A\in\mathcal{M}: \nu(A)=0\}$. Applying Proposition \ref{p:decomp}, we recover the familiar fact that if $\mu$ is a $\sigma$-finite measure defined on $\mathcal{M}$, then $$\mu=\sigma+\rho,$$ where $\sigma:=\mu_{\mathcal{N}}$ and $\rho:=\mu_\mathcal{N}^\perp$ satisfy $\sigma\perp \nu$ and $\rho\ll\nu$. In other words, we obtain the Lebesgue-Radon-Nikodym decomposition of a $\sigma$-finite measure $\mu$ as the sum of a singular measure and an absolutely continuous measure with respect to an arbitrary auxiliary measure $\nu$.\end{example}

\begin{example}Let $(\mathbb{X},\mathcal{M})$ be a measurable space. Let $\mathcal{N}$ denote the collection of \emph{atoms} of $\mathcal{M}$; that is, $A\in\mathcal{N}$ if and only if $A\in\mathcal{M}$ is nonempty and $B\subsetneq A$ for some $B\in\mathcal{M}$ implies $B=\emptyset$. (In the common situtation that $\mathbb{X}$ is a metric space and $\mathcal{M}$ contains the Borel $\sigma$-algebra on $\mathbb{X}$, the family of atoms of $\mathcal{M}$ is precisely the set of singletons of $\mathbb{X}$.) For every atom $A$, the \emph{Dirac measure} $\delta_A$ is defined by the rule $\delta_A(B)=1$, if $A\subseteq B$, and $\delta_A(B)=0$, otherwise. By Proposition \ref{p:decomp}, we obtain that for any $\sigma$-finite measure $\mu$ defined on $\mathcal{M}$, $$\mu=\mu_{\bullet}+\mu_{\circ},$$ where $\mu_{\bullet}:=\mu_\mathcal{N}$ is \emph{atomic} (carried by the atoms of $\mathcal{M}$) and $\mu_{\circ}:=\mu_\mathcal{N}^\perp$ is \emph{atomless} (singular to the atoms of $\mathcal{M}$). Moreover, in this situation, one can easily verify that either $\mu_\bullet=0$ or there exist finitely or countably many atoms $A_i$ and weights $0<c_i<\infty$ such that $$\mu_\bullet=\sum_{i}c_i \delta_{A_i}.$$\end{example}

\begin{example}\label{ex:1rect} Assume that $(\mathbb{X},\mathcal{M})=(\RR^n,\mathcal{B}_{\RR^n})$, where $\mathcal{B}_{\RR^n}$ denotes the Borel $\sigma$-algebra on $\RR^n$. A \emph{rectifiable curve} in $\RR^n$ is the image $f([0,1])$ of a continuous map $f:[0,1]\rightarrow \RR^n$ of finite total variation, or equivalently, is the image of a Lipschitz continuous map $f:[0,1]\rightarrow\RR^n$ (see e.g.~\cite{AO}). Let $\mathcal{N}$ denote the collection of all rectifiable curves in $\RR^n$. By Proposition \ref{p:decomp}, every $\sigma$-finite Borel measure $\mu$ on $\RR^n$ can be uniquely written as $$\mu=\mu^1_\rect+\mu^1_\pu,$$ where $\mu^1_\rect:=\mu_\mathcal{N}$ is \emph{1-rectifiable} (carried by rectifiable curves) and $\mu^1_\pu:=\mu_\mathcal{N}^\perp$ is \emph{purely 1-unrectifiable} (singular to rectifiable curves). For instance, of the three measures in Example \ref{ex:3}, the measures $\mu_0$ and $\mu_1$ are 1-rectifiable, while the measure $\mu_2$ is purely 1-unrectifiable. Finding geometric and/or measure-theoretic properties that distinguish between 1-rectifiable and purely 1-unrectifiable measures and their higher dimensional variants constitutes a large program in geometric measure theory; see e.g. \cite{Federer}, \cite{Falconer}, \cite{Mattila}. We will define and discuss $m$-rectifiable measures and purely $m$-unrectifiable measures with $m>1$ in \S3. \end{example}

Although Proposition \ref{p:decomp} provides for the decomposition of any $\sigma$-finite measure into component measures carried by or singular to $\mathcal{N}$, the proof of this fact is abstract (as it relies on the completeness axiom of $\RR$ and the approximation property of the supremum) and does not provide a concrete method to identify the components for a particular measure. This leads us to the following problem, which is our main problem of interest.

\begin{problem}[identification problem] \label{prob:ident} Let $(\mathbb{X},\mathcal{M})$ be a measurable space, let $\mathcal{N}\subseteq\mathcal{M}$ be a family of measurable sets, and let $\mathscr{F}$ be a family of $\sigma$-finite measures defined on $\mathcal{M}$. Find properties $P(\mu,x)$ and $Q(\mu,x)$ defined for all $\mu\in\mathscr{F}$ and $x\in\mathbb{X}$ such that \begin{equation}\mu_\mathcal{N} = \mu\res\{x\in\mathbb{X}: P(\mu,x) \text{ holds}\}\quad\text{and}\quad\mu_\mathcal{N}^\perp = \mu\res\{x\in\mathbb{X}: Q(\mu,x) \text{ holds}\}.\end{equation} In other words, find (pointwise) properties that identify the part of $\mu$ carried by $\mathcal{N}$ and the part of $\mu$ singular to $\mathcal{N}$ for all measures in the class $\mathscr{F}$.
\end{problem}

There is room for debate on what constitutes a ``good" solution of Problem \ref{prob:ident}, but in the author's view a reasonable solution should generally involve the geometry of the space $\mathbb{X}$ or sets $\mathcal{N}$. If this includes the ability to sample a measure on a ball, then the atomic identification problem for locally finite measures in a metric space is easily solved.

\begin{example}Let $(\mathbb{X},\mathcal{M})=(\mathbb{X},\mathcal{B}_{\mathbb{X}})$, where $\mathbb{X}$ is a metric space and $\mathcal{B}_\mathbb{X}$ denotes the Borel $\sigma$-algebra on $\mathbb{X}$. Let $\mathcal{N}$ be the collection of singletons in $\mathbb{X}$ and let $\mathscr{F}$ denote the collection of locally finite, $\sigma$-finite Borel measures. If $\mu\in\mathscr{F}$, then $$\mu_\bullet\equiv \mu_{\mathcal{N}}=\mu\res\left\{x\in \mathbb{X}: \lim_{r\downarrow 0} \mu(B(x,r))>0\right\},$$
$$\mu_\circ\equiv\mu_\mathcal{N}^\perp=\mu\res\left\{x\in \mathbb{X}: \lim_{r\downarrow 0}\mu(B(x,r))=0\right\}$$ by continuity of measures from above.

Here the restriction to locally finite measures is crucial. For example, if $\{\ell_i\}_{i=1}^\infty$ is an enumeration of straight lines in the plane that pass through the origin and have rational slopes, then $\mu=\sum_{i=1}^\infty \mathscr{L}^1\res \ell_i$ is $\sigma$-finite and atomless, but $\mu(B(0,r))=\infty$ for all $r>0$.\end{example}

The identification problem for 1-rectifiable measures was first studied by Besicovitch \cite{Bes28,Bes38} in a broader investigation into the geometry of planar sets with positive and finite length, and later by Morse and Randolph \cite{MR}, Moore \cite{Moore}, Pajot \cite{Pajot97}, Lerman \cite{Lerman}, and Azzam and Mourgoglou \cite{AM15}. Complete solutions within the classes of pointwise doubling measures and Radon measures in $\RR^n$ were furnished very recently by Badger and Schul \cite{BS3}. A description of the latter will be presented in \S2.

\begin{example}\label{ex:bes} Let $(\mathbb{X},\mathcal{M})=(\RR^2,\mathcal{B}_{\RR^2})$, let $\mathcal{N}$ be the collection of rectifiable curves in $\RR^2$, and let $\mathscr{F}:=\{\Haus^1\res E:E\in\mathcal{B}_{\RR^2}\text{ and } 0<\Haus^1(E)<\infty\}$, where $\Haus^s$ denotes the $s$-dimensional Hausdorff measure defined by \begin{equation}\label{e:haus}\Haus^s(E):=\lim_{\delta\downarrow 0} \inf\left\{\sum_{i=1}^\infty (\diam E_i)^s: E\subseteq\bigcup_{i=1}^\infty E_i\text{ and }\diam E_i\leq \delta\right\}.\end{equation} Besicovitch \cite{Bes28,Bes38} proved that if $\mu=\Haus^1\res E\in\mathscr{F}$, then $$\mu^1_\rect\equiv\mu_\mathcal{N}=\Haus^1\res\left\{x\in E: \lim_{r\downarrow 0} \frac{\Haus^1(E\cap B(x,r))}{2r}=1\right\},$$
$$\mu^1_\pu\equiv\mu_\mathcal{N}^\perp=\Haus^1\res\left\{x\in E: \liminf_{r\downarrow 0} \frac{\Haus^1(E\cap B(x,r))}{2r}\leq\frac{3}{4}\right\}.$$ Besicovitch conjectured that the constant 3/4 may be replaced by 1/2, but finding the optimal constant is still an open problem. The best result to date is due to Preiss and Ti\v{s}er \cite{PT92}, who showed that 3/4 may be replaced by the constant $(2+\sqrt{46})/12=0.731\dots$; moreover, Preiss and Ti\v{s}er showed this holds in any metric space. A special case of the 1/2 conjecture for sets with an \emph{a priori} flatness condition was settled by Farag \cite{Farag1,Farag2}.

In \cite{Bes28}, Besicovitch also showed that if $\mu=\Haus^1\res E\in\mathscr{F}$, then $$\mu^1_\rect=\Haus^1\res\left\{x\in E:E\text{ has an $\Haus^1$ approximate tangent line at }x\right\},$$
$$\mu^1_\pu=\Haus^1\res\left\{x\in E: E\text{ does not have an $\Haus^1$ approximate tangent line at }x\right\}.$$ Here we say that $E$ has an \emph{$\Haus^1$ approximate tangent line} at $x$ if $$\limsup_{r\downarrow 0}\frac{\Haus^1(E\cap B(x,r))}{2r}>0$$ and there exists a line $L$ containing $x$ such that $$\limsup_{r\downarrow 0} \frac{\Haus^1(E\cap B(x,r)\setminus X(x,L,\alpha))}{2r}=0\quad\text{for all }\alpha\in(0,\pi/2],$$ where $X(x,L,\alpha)$ denotes the cone of points $y\in\RR^2$ such that the line $\{x+t(y-x):t\in\RR\}$ meets the line $L$ at an angle at most $\alpha$ (see Figure 1.2). For a contemporary, self-contained presentation of these results, see \cite{Falconer}. \end{example}

\begin{figure}\begin{center}\includegraphics[width=.7\textwidth]{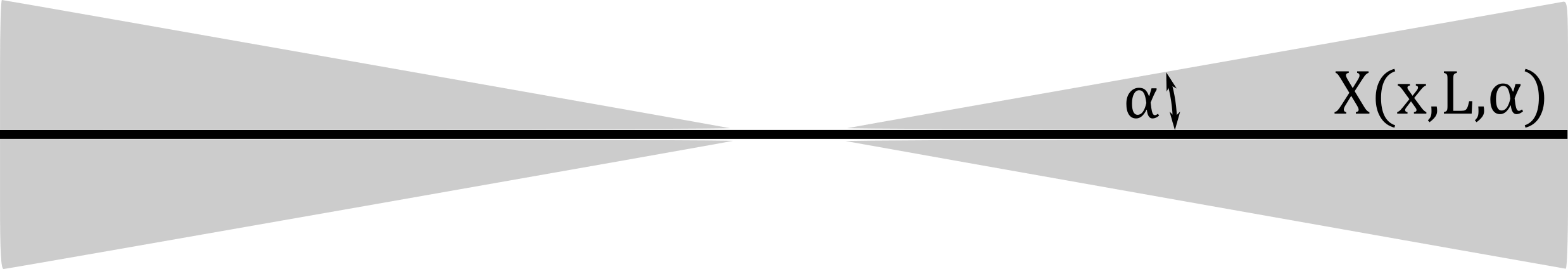}\end{center}\caption{the cone $X(x,L,\alpha)$}\end{figure}

\begin{example}Let $(\mathbb{X},\mathcal{M})=(\RR^2,\mathcal{B}_{\RR^2})$, let $\mathcal{N}$ be the collection of rectifiable curves in $\RR^2$, and let $\mathscr{F}$ denote the collection of Radon measures (locally finite, Borel regular measures) $\mu$ on $\RR^2$ \textbf{such that} $\mu\ll\Haus^1$. Morse and Randolph \cite{MR} proved that if $\mu\in\mathscr{F}$, then $$\mu^1_\rect\equiv\mu_\mathcal{N}=\mu\res\left\{x\in\RR^2: \lim_{r\downarrow 0}\frac{\mu(B(x,r))}{2r}\text{ exists, and }0<\lim_{r\downarrow 0} \frac{\mu(B(x,r))}{2r}<\infty\right\},$$
$$\mu^1_\pu\equiv\mu_\mathcal{N}^\perp=\mu\res\left\{x\in\RR^2: \liminf_{r\downarrow 0} \frac{\mu(B(x,r))}{2r}\leq \frac{100}{101}\limsup_{r\downarrow 0} \frac{\mu(B(x,r))}{2r}\right\}.$$ Moore \cite{Moore} extended Morse and Randolph's density characterizations of $\mu^1_\rect$ and $\mu^1_\pu$ to Radon measures $\mu$ in $\RR^n$ such that $\mu\ll\Haus^1$ for all dimensions $n\geq 2$. Higher dimensional analogues of the density characterization of absolutely continuous $m$-rectifiable measures in $\RR^n$ were established by Preiss \cite{Preiss}.

In \cite{MR}, Morse and Randolph also proved that if $\mu$ is a Radon measure in $\RR^2$ such that $\mu\ll\Haus^1$, then $$\mu^1_\rect=\mu\res\left\{x\in\RR^n:\text{there is a $\mu$ approximate tangent line at }x\right\},$$
$$\mu^1_\pu=\mu\res\left\{x\in \RR^n: \text{ there is not a $\mu$ approximate tangent line at }x\right\},$$ where \emph{$\mu$ approximate tangent lines} are defined by analogy with $\Haus^1\res E$ approximate tangent lines in an obvious way (replacing $\Haus^1\res E$ with $\mu$). Higher dimensional analogues of the tangential characterization of absolutely continuous $m$-rectifiable measures in $\RR^n$ were supplied by Federer \cite{Fed47}.
\end{example}

\begin{example}\label{ex:beta} For any Radon measure $\mu$ on $\RR^n$, location $x\in\RR^n$, and scale $r>0$, define the \emph{homogeneous 1-dimensional $L^2$ Jones beta number} $\beta^h_2(\mu,x,r)$ by \begin{equation}\label{e:beta-h}\beta^h_2(\mu,x,r)^2=\inf_{\ell}\int_{B(x,r)} \left(\frac{\dist(y,\ell)}{r}\right)^2 \frac{d\mu(y)}{r},\end{equation} where the infimum runs over all straight lines (1-dimensional affine subspaces) in $\RR^n$. Roughly speaking, $\beta^h_2(\mu,x,r)$ records in an $L^2$ gauge how well the measure $\mu\res B(x,r)$ can be approximated by a measure supported on a line. Here \emph{homogeneous} refers to the normalization of the measure in \eqref{e:beta-h}---see \S2 below for further discussion. The number $\beta^h_2(\mu,x,r)$ is one instance of a larger family of unilateral approximation numbers that originated in work of Jones \cite{Jones-TSP}, Okikolu \cite{Ok-TST}, Bishop and Jones \cite{BJ}, and David and Semmes \cite{DS91,DS93} on quantitative rectifiability of sets. The \emph{$L^2$ Jones function} or \emph{geometric square function} defined by \begin{equation}\label{e:Jones-h} J_2(\mu,x):=\int_0^1 \beta^h_2(\mu,x,r)^2\, \frac{dr}{r}\end{equation} records the total square error of the linear approximation numbers $\beta^h_2(\mu,x,r)$ on vanishing coarse scales.

Let $(\mathbb{X},\mathcal{M})=(\RR^n,\mathcal{B}_{\RR^n})$, let $\mathcal{N}$ be the collection of rectifiable curves in $\RR^n$, and let $\mathscr{F}:=\{\Haus^1\res E:E\text{ is compact and } \Haus^1(E\cap B(x,r))\sim_E r\text{ for all }0<r\leq \diam E\}$ denote the collection of  Hausdorff measures restricted to \textbf{compact 1-dimensional Ahlfors regular sets}. Pajot \cite{Pajot97} proved that if $\mu=\Haus^1\res E\in\mathscr{F}$, then $$\mu^1_\rect\equiv\mu_\mathcal{N}=\Haus^1\res\left\{x\in E: J_2(\mu,x)<\infty\right\},$$
$$\mu^1_\pu\equiv\mu_\mathcal{N}^\perp=\Haus^1\res\left\{x\in E: J_2(\mu,x)=\infty\right\}.$$ Pajot also obtained analogous results for $m$-dimensional Hausdorff measures restricted to compact $m$-dimensional Ahlfors regular sets using higher-dimensional beta numbers, which are defined using approximation by $m$-planes instead of approximation by lines.

Although the definition of $J_2(\mu,x)$ makes sense for any Radon measure, it  does not allow one to identify 1-rectifiable measures in general.
In \cite{Lerman}, Lerman explored modifications to the definition of $\beta^h_2(\mu,x,r)$ and $J_2(\mu,x)$ that expanded the class of measures to which Pajot's theorem applies. Further work in this direction (see \cite{BS,BS2,BS3}) led to a solution of the identification problem for 1-rectifiable Radon measures, which we will describe in detail in the next section. In the special case when $\mu$ is a \emph{pointwise doubling} Radon measure (i.e.~$\limsup_{r\downarrow 0} \mu(B(x,2r))/\mu(B(x,r))<\infty$ at $\mu$-a.e.~$x\in\RR^n$), the method of \cite{BS3} can be used to show that the \emph{density-normalized $L^2$ Jones function} $$\widetilde J_2(\mu,x) := \int_0^1 \beta^h_2(\mu,x,r)^2\cdot\left[\frac{r}{\mu(B(x,r))}\right]^2\,\frac{dr}{r}$$ determines the 1-rectifiable and purely 1-unrectifiable part of $\mu$: $$\mu^1_\rect\equiv\mu_\mathcal{N}=\mu\res\left\{x\in \RR^n: \widetilde J_2(\mu,x)<\infty\right\},$$
$$\mu^1_\pu\equiv\mu_\mathcal{N}^\perp=\mu\res\left\{x\in \RR^n: \widetilde J_2(\mu,x)=\infty\right\}.$$ For a 1-dimensional Ahlfors regular measure, it is clear that $J_2(\mu,x)\sim \widetilde J_2(\mu,x)$. Therefore, Badger and Schul's pointwise doubling theorem directly generalizes the case $m=1$ of Pajot's theorem to a wider class of measures, including measures like $\mu_0$ from Example \ref{ex:3} that are mutually singular with $\Haus^1$. Martikainen and Orponen \cite{MO} have constructed a family of pointwise non-doubling Borel probability measures $\mu_\varepsilon$ on $\RR^2$ such that $$\tJ_2(\mu_\varepsilon,x)\leq \varepsilon\ll 1\quad\text{for all }x\in\RR^2$$ and $\mu_\varepsilon$ is purely 1-unrectifiable. Thus, for general Radon measures, pointwise control on the density-normalized Jones function $\tJ_2(\mu,x)$ is also not enough to identify $\mu^1_\rect$.
\end{example}

\begin{example}Let $(\mathbb{X},\mathcal{M})=(\mathbb{X},\mathcal{B}_\mathbb{X})$ be a \textbf{connected} metric space equipped with its Borel $\sigma$-algebra, let $\mathcal{N}$ denote the collection of rectifiable curves in $\mathbb{X}$, and let $\mathscr{F}$ denote the collection of Radon measures on $\mathbb{X}$ whose supports are $\mathbb{X}$ (i.e.~ $\mu(B)>0$ for every ball $B$ in $\mathbb{X}$) and are doubling in the sense that $$\sup\left\{\frac{\mu(B(x,2r))}{\mu(B(x,r))}:x\in\mathbb{X}, r>0\right\}<\infty.$$ Azzam and Mourgoglou \cite{AM15} proved that $$\mu^1_\rect\equiv\mu_\mathcal{N}=\mu\res\left\{x\in\mathbb{X}: \liminf_{r\downarrow 0}\frac{\mu(B(x,r))}{2r}>0\right\},$$ $$\mu^1_\pu\equiv\mu_{\mathcal{N}}^\perp=\mu\res\left\{x\in\mathbb{X}: \liminf_{r\downarrow 0}\frac{\mu(B(x,r))}{2r}=0\right\}.$$  Standard examples (self-similar Cantor sets in $\RR^2$ of Hausdorff dimension one) show that the connectedness hypothesis cannot be dropped from Azzam and Mourgolgou's theorem. For examples in $\RR^n$ of 1-rectifiable doubling measures whose support is the whole of $\RR^n$, see Garnett, Killip, and Schul \cite{GKS}.\end{example}

The examples above illustrate different solutions of Problem \ref{prob:ident} when $\mathcal{N}$ is the collection of rectifiable curves in $\RR^n$ and $\mathscr{F}$ is one of several sets of $\sigma$-finite Borel measures on $\RR^n$. Additional results are available when $\mathcal{N}$ is the collection of images of Lipschitz maps $f:[0,1]^m\rightarrow\RR^n$ \textbf{and} $\mu\ll\Haus^m$ (see \S3), or when $\XX=\mathbb{H}^n$ is the $n$-th Heisenberg group (see \S4), but in general we currently know far less than one should like. For example, even the following deceptively simple problem is presently open (cf. Example \ref{ex:3}).

\begin{problem}\label{prob:lines} Let $\mathcal{N}$ denote the set of lines (1-dimensional affine subspaces) in $\RR^2$. Identify the Radon measures on $\RR^2$ that are carried by $\mathcal{N}$ or singular to $\mathcal{N}$.\end{problem}

The rest of this survey is organized, as follows. In \S2, we present the solution of the identification problem for 1-rectifiable Radon measures from \cite{BS3}. In \S3, we review the current state of affairs on identification problems for $m$-rectifiable measures when $m\geq 2$.
In \S4, we discuss further directions and open problems on generalized rectifiability, including fractional rectifiability and higher order rectifiability in $\RR^n$---and other spaces.

%In the Appendix, we record a few elementary observations about Definition \ref{def:carry} and Proposition \ref{p:decomp} in the hope that they may elucidate these concepts.

\section{Solution of the identification problem for 1-rectifiable measures}

In this section, we will present the solution to the identification problem for 1-rectifiable Radon measures from \cite{BS3} (see Theorem \ref{t:big}). The basic strategy is to promote a characterization of subsets of rectifiable curves, called the Analyst's Traveling Salesman theorem, to a characterization of 1-rectifiable measures. For any $E\subset \RR^n$ and bounded set $Q\subset\RR^n$ of positive diameter (such as a ball or a cube), the \emph{Jones' beta number} $\beta_E(Q)$ is the quantity in $[0,1]$ defined by $$\beta_E(Q):=\inf_\ell \sup_{x\in E\cap Q} \frac{\dist(x,\ell)}{\diam Q},$$ where $\ell$ ranges over all straight lines in $\RR^n$, if $E\cap Q\neq\emptyset$, and by $\beta_E(Q):=0$ if $E\cap Q=\emptyset$. At one extreme, if $\beta_E(Q)=0$, then $E\cap Q$ is a subset of some straight line $\ell$ passing through $Q$. At the other extreme, if $\beta_E(Q)\sim 1$, then $E\cap Q$ is ``far away" from being contained in any line passing through $Q$. The following theorem was first conceived and proved with by Jones \cite{Jones-TSP} when $n=2$ and later extended to higher dimensions $n\geq 3$ by Okikiolu \cite{Ok-TST}. For further information, see the survey \cite{TST-survey} by Schul as well as the subsequent developments in the Heisenberg group by Li and Schul \cite{LS1,LS2} and in Laakso type spaces by David and Schul \cite{DS-tst}.

\begin{theorem}[Analyst's Traveling Salesman theorem] \label{t:tst} Let $n\geq 2$ and let $E\subset\RR^n$ be a bounded set. Then $E$ is contained in a rectifiable curve in $\RR^n$ if and only if $$S_E:=\sum_{Q\text{ dyadic}} \beta_E(3Q)^2\diam Q <\infty,$$ where the sum ranges over all dyadic cubes and $3Q$ denotes the concentric dilate of $Q$. Moreover, if $S_E<\infty$, there exists a rectifiable curve $\Gamma$ containing $E$ such that $$\Haus^1(\Gamma)\lesssim_n \diam E+ S_E.$$\end{theorem}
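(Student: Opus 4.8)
The plan is to prove the two implications separately, with the forward direction (necessity of $S_E<\infty$) resting on a Pythagorean length-excess estimate together with a Carleson-type packing argument, and the reverse direction (sufficiency) resting on an explicit construction of a curve through $E$ whose length is controlled by $\diam E + S_E$. The reverse direction is both the harder implication and the one that produces the quantitative bound $\Haus^1(\Gamma)\lesssim_n \diam E + S_E$, so I would treat it as the main content.

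For necessity, suppose $E \subseteq \Gamma$ with $\Gamma$ a rectifiable curve of length $L := \Haus^1(\Gamma)$. Since $E\cap 3Q \subseteq \Gamma \cap 3Q$, the definition of the beta number gives $\beta_E(3Q)\le \beta_\Gamma(3Q)$, so it suffices to show $\sum_Q \beta_\Gamma(3Q)^2 \diam Q \lesssim_n L$. The mechanism is that whenever $\Gamma$ deviates from its best-fitting line in $3Q$ by a transversal amount $\sim\beta_\Gamma(3Q)\diam Q$, the portion of $\Gamma$ in $3Q$ must exceed the corresponding straight segment by an excess of order $\beta_\Gamma(3Q)^2\diam Q$, via the first-order expansion $\sqrt{1+t^2}-1\sim t^2$. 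First I would make this rigorous by selecting, for each cube, two points of $\Gamma\cap 3Q$ realizing the deviation and comparing the connecting sub-arc to the chord. The remaining, and genuinely delicate, step is to sum these excesses without overcounting, since a single unit of arc length can be charged by cubes of many scales and positions. I would control this overlap with a stopping-time argument, showing that at each fixed scale the relevant cubes are essentially disjoint along $\Gamma$ and that the excesses form a Carleson packing with respect to $\Haus^1\res\Gamma$, so that the total telescopes to $\lesssim_n L$.

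For sufficiency, assume $S_E<\infty$; I may assume $E$ is compact (pass to $\overline E$, which leaves each $\beta_E(3Q)$ unchanged) and, after rescaling, that $\diam E \sim 1$. The idea is to build a nested sequence of connected finite graphs $G_k$ whose vertices form successively finer nets of $E$: for each $k$ let $X_k$ be a maximal $2^{-k}$-separated subset of $E$, and form $G_{k}$ from $G_{k-1}$ by inserting the new net points and joining them by short edges. The crucial local estimate is that inside a cube $Q$ of side $2^{-k}$ the set $E\cap 3Q$ lies within $\beta_E(3Q)\diam Q$ of a line $\ell_Q$; ordering the net points of $Q$ along $\ell_Q$ and connecting them consecutively, the length added in $Q$ beyond the unavoidable ``along-line'' cost is again of order $\beta_E(3Q)^2\diam Q$ by the Pythagorean expansion. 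Summing the along-line cost telescopes against $\diam E$, while the transversal cost sums to $S_E$, giving $\ell(G_k)\lesssim_n \diam E + S_E$ uniformly in $k$. I would then pass to a limit: the closures $\overline{G_k}$ subconverge in the Hausdorff metric to a compact connected set $K\supseteq E$ with $\Haus^1(K)\le \liminf_k \ell(G_k)\lesssim_n \diam E + S_E$ (lower semicontinuity of $\Haus^1$ along Hausdorff-convergent continua). Finally, a compact connected set of finite $\Haus^1$ measure is a rectifiable curve in the sense of Example \ref{ex:1rect}: it admits a Lipschitz surjection $f:[0,1]\to K$ of length at most $2\Haus^1(K)$, which yields the desired $\Gamma\supseteq E$ with $\Haus^1(\Gamma)\lesssim_n \diam E + S_E$.

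The main obstacle in both directions is the bookkeeping of overlaps, but it is most severe in the construction. Choosing the edges of $G_k$ so that (i) the graph stays connected across scales, (ii) each new net point is connected at cost comparable to $2^{-k}$ plus its transversal deviation, and (iii) the per-cube estimate can be summed with only bounded multiplicity requires a careful scheme for deciding which cubes are ``active'' and how net points are matched to the lines $\ell_Q$. In particular one must handle cubes where $E$ is sparse, manage the interaction of consecutive scales, and ensure that the along-line length does not accumulate beyond $\diam E$. This combinatorial control of multiplicity---rather than the elementary Pythagorean geometry---is where the real difficulty of the theorem lies.
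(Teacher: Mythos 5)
The paper does not prove Theorem \ref{t:tst}: it is quoted as background, with the proof attributed to Jones \cite{Jones-TSP} for $n=2$ and Okikiolu \cite{Ok-TST} for $n\geq 3$. So there is no in-paper argument to compare against; what I can say is that your outline follows the standard strategy of those references --- Pythagorean excess $\sqrt{1+t^2}-1\sim t^2$ plus a Carleson packing for necessity, a multiscale net/graph construction with G\'o\l\c{a}b-type lower semicontinuity and the Wa\.{z}ewski parameterization (the result cited via \cite{AO} and \cite{Schul-Hilbert} in \S3) for sufficiency. The overall architecture is right, and the closing step (a continuum of finite $\Haus^1$ measure is a Lipschitz image of $[0,1]$ with length at most twice its measure) is correctly identified.

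That said, two places where you defer to ``bookkeeping'' are in fact where the known proofs spend most of their effort, and as written the steps do not yet close. In the necessity direction, choosing in each $3Q$ two points realizing the deviation and comparing sub-arc to chord does not directly yield a summable family of excesses: $\beta_\Gamma(3Q)$ is a supremum over all of $\Gamma\cap 3Q$, the curve may enter $3Q$ in many disjoint arcs, the sub-arc joining your two chosen points need not stay in $3Q$, and a fixed piece of arc is charged by cubes at every scale, so per-scale disjointness alone does not produce a Carleson bound. Jones's argument handles this by decomposing $\Gamma$ into a controlled filtration of arcs and running the excess estimate relative to that decomposition; some such device is needed, not merely a stopping time on cubes. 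In the sufficiency direction, the assertion that ``the along-line cost telescopes against $\diam E$'' hides the key increment estimate $\ell(G_k)-\ell(G_{k-1})\lesssim \sum_{\side Q=2^{-k}}\beta_E(3Q)^2\diam Q$, which requires a case split between flat cubes (where new net points must be inserted by subdividing an existing nearly collinear edge, so that only the second-order transversal cost is new) and non-flat cubes (where edges of length $\sim 2^{-k}$ are charged to $\beta_E(3Q)^2\diam Q\gtrsim \epsilon^2 2^{-k}$), together with a scheme guaranteeing connectivity across scales with bounded multiplicity. Naming these two mechanisms --- the arc filtration for the upper bound on $S_E$ and the flat/non-flat dichotomy for the construction --- is what would turn your outline into a proof.
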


The characterization of 1-rectifiable measures from \cite{BS3} has two main components: \begin{itemize}
\item the lower 1-dimensional Hausdorff density $\lD1(\mu,\cdot)$ of a measure $\mu$ (a common notion in geometric measure theory); and,
\item a density-normalized Jones function $J^*_2(\mu,\cdot)$ associated to ``anisotropic" $L^2$ beta numbers $\beta^*_2(\mu,\cdot)$ (which are the main innovation in \cite{BS3}).
\end{itemize} (Below I attempt to provide an intuitive explanation of some ideas behind the definition of $\beta^*_2(\mu,x)$ and $J_2^*(\mu,x)$ that R. Schul and I had in mind when we wrote, but did not expressly describe in \cite{BS3}.)

For any Radon measure $\mu$ on $\RR^n$ and $x\in\RR^n$, the \emph{lower 1-dimensional Hausdorff density} of $\mu$ at $x$ is the quantity $\lD1(\mu,x)$ in $[0,\infty]$ given by $$\lD1(\mu,x):=\liminf_{r\downarrow 0} \frac{\mu(B(x,r))}{2r}.$$ The following lemma shows that to identify the 1-rectifiable part of $\mu$, it suffices to focus on the set of points where the lower density is positive.

\begin{lemma}[see {\cite[Lemma 2.7]{BS}}] \label{l:larp} Let $\mu$ be a Radon measure on $\RR^n$. Then $$\mu^1_\rect \leq \mu\res\{x: \lD1(\mu,x)>0\}\quad\text{ and }\quad \mu\res\{x:\lD1(\mu,x)=0\}\leq \mu^1_\pu.$$ That is, $\mu\res\{x:\lD1(\mu,x)=0\}$ is purely 1-unrectifiable, and if $\mu$ is 1-rectifiable, then $\lD1(\mu,x)>0$ at $\mu$-a.e. $x\in\RR^n$. \end{lemma}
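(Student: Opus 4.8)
The plan is to reduce both displayed inequalities to a single statement about the rectifiable part and then to prove that statement by comparing $\mu$ against one-dimensional Hausdorff measure on each carrying curve. Writing $Z:=\{x:\lD1(\mu,x)=0\}$, the two inequalities are together equivalent to the single assertion $\mu^1_\rect(Z)=0$: since $\mu=\mu^1_\rect+\mu^1_\pu$, if $\mu^1_\rect(Z)=0$ then $\mu\res Z=\mu^1_\pu\res Z\le\mu^1_\pu$, while $\mu^1_\rect=\mu^1_\rect\res(\RR^n\setminus Z)\le\mu\res(\RR^n\setminus Z)=\mu\res\{x:\lD1(\mu,x)>0\}$. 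Because $\mu^1_\rect\le\mu$ forces $\lD1(\mu^1_\rect,x)\le\lD1(\mu,x)$ pointwise, it suffices to prove the cleaner claim that $\lD1(\mu^1_\rect,x)>0$ for $\mu^1_\rect$-a.e.\ $x$. As $\mu^1_\rect$ is carried by countably many rectifiable curves $\{\Gamma_i\}$, I would disjointify them into Borel pieces $E_i\subseteq\Gamma_i$ and set $\sigma_i:=\mu^1_\rect\res E_i$; since $\sigma_i\le\mu^1_\rect$ gives $\lD1(\sigma_i,x)\le\lD1(\mu^1_\rect,x)$, the problem reduces to showing that a \emph{finite} Borel measure $\sigma$ carried by a \emph{single} rectifiable curve $\Gamma$ (i.e.\ $\sigma(\RR^n\setminus\Gamma)=0$) satisfies $\lD1(\sigma,x)>0$ for $\sigma$-a.e.\ $x$.

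For the core of the argument I would set $\lambda:=\Haus^1\res\Gamma$, which is a finite Radon measure because $\Gamma$ is rectifiable. Applying the Besicovitch differentiation theorem to the pair $(\sigma,\lambda)$, the ratio $D_\lambda\sigma(x):=\lim_{r\downarrow0}\sigma(B(x,r))/\lambda(B(x,r))$ exists for $\sigma$-a.e.\ $x$ and lies in $(0,\infty]$: it is finite and positive at $\sigma$-a.e.\ point of the absolutely continuous part $\sigma\ll\lambda$, and it equals $+\infty$ at $\sigma$-a.e.\ point of the singular part $\sigma\perp\lambda$. The geometric input is the standard connectedness bound $\Haus^1(\Gamma\cap B(x,r))\ge r$, valid for $x\in\Gamma$ and all $0<r<\diam\Gamma$ (a subcontinuum joining $x$ to $\partial B(x,r)$ has length at least $r$), so that $\lambda(B(x,r))/(2r)\ge\tfrac12$ for all small $r$; the degenerate case $\diam\Gamma=0$ makes $\sigma$ atomic with $\lD1=\infty$. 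Multiplying the two estimates, for $\sigma$-a.e.\ $x$ one obtains
$$\lD1(\sigma,x)=\liminf_{r\downarrow0}\frac{\sigma(B(x,r))}{2r}=\liminf_{r\downarrow0}\frac{\sigma(B(x,r))}{\lambda(B(x,r))}\cdot\frac{\lambda(B(x,r))}{2r}\ge \tfrac12\,D_\lambda\sigma(x)>0,$$
which is exactly the reduced claim (the factored liminf is justified because $\sigma(B(x,r))/\lambda(B(x,r))$ converges).

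The main obstacle is the second paragraph, specifically the control of the part of $\sigma$ that is singular with respect to arclength. The subtle point is that a measure concentrated on an $\Haus^1$-null subset of $\Gamma$ must still have positive — indeed infinite — lower density, and this is precisely what the differentiation theorem delivers by forcing the ball ratio against $\lambda$ to blow up $\sigma_s$-a.e. Pairing this with the lower bound on $\Haus^1(\Gamma\cap B(x,r))$ is the only place where the rectifiability (equivalently, the connectedness and finite length) of the carrying sets genuinely enters the argument; a purely unrectifiable measure such as $\mu_2$ in Example~\ref{ex:3} fails the geometric bound and indeed has $\lD1=0$ everywhere. The remaining ingredients — the measurable disjointification of the curves and the harmless interchange of open and closed balls in the density and in the geometric estimate — are routine.
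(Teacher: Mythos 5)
Your argument is correct, but it takes a genuinely different route from the one the paper relies on. The paper defers to \cite[Lemma 2.7]{BS}, whose proof goes through packing measures: by the Taylor--Tricot density theorem \cite{TT}, if $\lD1(\mu,x)<\varepsilon$ for every $x$ in a Borel set $E$, then $\mu(E)\lesssim\varepsilon\,\mathcal{P}^1(E)$; since a rectifiable curve $\Gamma=f([0,1])$ has $\mathcal{P}^1(\Gamma)\leq \mathrm{Lip}(f)\,\mathcal{P}^1([0,1])<\infty$, letting $\varepsilon\downarrow0$ gives $\mu(\Gamma\cap\{\lD1(\mu,\cdot)=0\})=0$ for every rectifiable curve, which is exactly the statement that $\mu\res\{\lD1(\mu,\cdot)=0\}$ is purely $1$-unrectifiable. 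You replace this with Besicovitch differentiation of $\sigma$ against $\lambda=\Haus^1\res\Gamma$ combined with the connectedness bound $\Haus^1(\Gamma\cap B(x,r))\geq r$; every step checks out, including the reduction to $\mu^1_\rect(\{\lD1(\mu,\cdot)=0\})=0$, the disjointification, the blow-up of the derivative on the part of $\sigma$ singular to arclength, and the factored liminf. What each buys: your route is self-contained modulo standard differentiation theory and isolates exactly where one-dimensionality enters (the length-at-least-radius estimate for continua), but for that reason it does not extend to $m\geq 2$, since Lipschitz images of $[0,1]^m$ need not satisfy $\Haus^m(\Gamma\cap B(x,r))\gtrsim r^m$ at every point of the image; the packing-measure argument uses no connectedness, is quantitative, and transfers verbatim to give Lemma \ref{l:marp} for all $m$ because $\mathcal{P}^m(f([0,1]^m))<\infty$ for Lipschitz $f$.
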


\begin{proof} This is an easy consequence of the relationship between pointwise control on the lower density $\lD1(\mu,\cdot)$ along a set $E$ and the 1-dimensional packing measure $\mathcal{P}^1$ of $E$ (see Taylor and Tricot \cite{TT}); the finiteness of the packing measure $\mathcal{P}^1$ on bounded sets in $\RR$; and, the interaction of packing measures with Lipschitz maps. See \cite{BS} for details.\end{proof}

For any Radon measure $\mu$ on $\RR^n$, bounded Borel set $Q\subset\RR^n$ with positive diameter (typically we take $Q$ to be a ball or a cube), and straight line $\ell$ on $\RR^n$, we define the quantity $\beta_2(\mu,Q,\ell)$ by $$\beta_2(\mu,Q,\ell)^2=\int_Q \left(\frac{\dist(y,\ell)}{\diam Q}\right)^2 \frac{d\mu(y)}{\mu(Q)}.$$ The \emph{non-homogeneous 1-dimensional $L^2$ Jones beta number} $\beta_2(\mu,Q)$ is given by $$\beta_2(\mu,Q):=\inf_\ell \beta_2(\mu,Q,\ell),$$ where the infimum runs over all straight lines in $\RR^n$.

\begin{remark} When $Q=B(x,r)$, we have $$\beta^h_2(\mu,x,r)^2= 4\,\frac{\mu(B(x,r))}{r}\cdot\beta_2(\mu,B(x,r))^2.$$ Thus, the two variants of beta numbers are comparable at scales where $\mu(B(x,r))\sim r$. One advantage of $\beta_2(\mu,B(x,r))$ over $\beta_2^h(\mu,x,r)$ is that the former takes values in $[0,1]$ and when $\mu=\mathscr{L}^n$ is the Lebesgue measure (for example), we have  $$\beta_2(\mathscr{L}^n,B(x,r))\sim 1\quad\text{for all }r>0,$$ while the latter takes values in $[0,\infty)$ and when $\mu=\mathscr{L}^n$ we have  $$\beta_2^h(\mathscr{L}^n,x,r)\sim r^{(n-1)/2}\rightarrow 0\quad\text{as } r\rightarrow 0.$$ We interpret this to mean that for general Radon measures, for which there is no \emph{a priori} control on the coarse density ratios $\mu(B(x,r))/r$, the beta numbers $\beta_2(\mu,B(x,r))$ with non-homogeneous scaling do better at indicating at a glance how well $\mu\res B(x,r)$ can be approximated by a measure supported on a line. The main reason that the numbers $\beta_2^h(\mu,x,r)$ persist in the literature is the historical accident of being initially defined that way by David and Semmes, who made a comprehensive investigation  into boundedness of singular integrals, where it was natural to restrict attention to Ahlfors regular measures (see \cite{DS91,DS93}).\end{remark}

Before we define the anisotropic beta number $\beta^*_2(\mu,Q)$ from \cite{BS3}, we give two lemmas in order to motivate its definition. The following observation, an elegant application of Jensen's inequality, is due to Lerman \cite{Lerman}. Practically speaking, it allows one to control the distance from the $\mu$ center of mass of a window $Q$ to a straight line $\ell$ in terms of the quantity $\beta_2(\mu,Q,\ell)$.

\begin{lemma}[control on the center of mass] \label{l:lerman} Let $\mu$ be a Radon measure on $\RR^n$, let $Q\subset\RR^n$ be a bounded Borel set of positive diameter such that $\mu(Q)>0$, and let $$z_Q:=\int_Q z\,\frac{d\mu(z)}{\mu(Q)}\in \RR^n$$ denote the center of mass of $Q$ with respect to $\mu$. For every straight line $\ell$ in $\RR^n$, $$\dist(z_Q,\ell)\leq \beta_2(\mu,Q,\ell)\diam Q.$$\end{lemma}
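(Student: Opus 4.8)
The plan is to exploit the convexity of the function $y\mapsto\dist(y,\ell)$ together with the fact that $z_Q$ is the barycenter of the probability measure $\nu:=\mu(Q)^{-1}\,\mu\res Q$. The key structural observation is an affine-linear representation of the distance: writing $\ell=\{a+tv:t\in\RR\}$ with $|v|=1$ and letting $P$ denote orthogonal projection onto $v^\perp$, one has
\[
\dist(y,\ell)=|P(y-a)|.
\]
Thus the distance to $\ell$ is the Euclidean norm of an affine function of $y$, so it interacts predictably with averaging. This is exactly what makes the Jensen-type estimate go through.

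First I would compute the image of the barycenter. Since $P$ is linear and $\nu$ is a probability measure,
\[
P(z_Q-a)=\int_Q P(y-a)\,d\nu(y),
\]
so that $\dist(z_Q,\ell)=\bigl|\int_Q P(y-a)\,d\nu(y)\bigr|$. Applying the triangle inequality for vector-valued integrals (equivalently, Jensen's inequality for the convex function $|\cdot|$) then gives
\[
\dist(z_Q,\ell)\leq\int_Q |P(y-a)|\,d\nu(y)=\int_Q \dist(y,\ell)\,d\nu(y).
\]
This is the heart of Lerman's observation: the distance from the center of mass to $\ell$ is controlled by the $\nu$-average of the pointwise distances.

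Next I would pass from the $L^1$ average to the $L^2$ average that appears in $\beta_2$. Because $\nu$ is a probability measure, Cauchy--Schwarz (again a special case of Jensen, now for $t\mapsto t^2$) yields
\[
\int_Q \dist(y,\ell)\,d\nu(y)\leq\left(\int_Q \dist(y,\ell)^2\,d\nu(y)\right)^{1/2}=\beta_2(\mu,Q,\ell)\,\diam Q,
\]
where the final equality is just the definition of $\beta_2(\mu,Q,\ell)$. Combining the two displays gives the claim.

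The only points requiring care are that $z_Q$ is well defined---which holds because $Q$ is bounded and $\mu(Q)\in(0,\infty)$, so $y\mapsto y$ is $\nu$-integrable---and the elementary identity $\dist(y,\ell)=|P(y-a)|$. I do not expect a genuine obstacle here; the single place demanding attention is the justification that the barycenter commutes with the linear projection $P$ and that the triangle inequality applies in the vector-valued integral, but both are standard once the affine representation of $\dist(\cdot,\ell)$ is in hand. The entire argument amounts to two applications of convexity, precisely as the statement advertises.
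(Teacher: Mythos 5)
Your argument is correct and is essentially the paper's proof: the paper applies Jensen's inequality once to the convex function $\dist(\cdot,\ell)^2$, while you unpack that same convexity step into the affine representation $\dist(y,\ell)=|P(y-a)|$, the triangle inequality for vector-valued integrals, and Cauchy--Schwarz. The two routes are mathematically identical in content, yours just being a more explicit (and slightly longer) rendering of the same Jensen-type estimate.
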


\begin{proof} For every affine subspace $\ell$ in $\RR^n$, the function $\dist(\cdot,\ell)^2$ is convex. Thus, $$\dist(z_Q,\ell)^2 = \dist\left(\int_Q z\,\frac{d\mu(z)}{\mu(Q)}\right)^2\leq \int_Q \dist(z,\ell)^2 \frac{d\mu(z)}{\mu(Q)} = \beta_2(\mu,Q,\ell)^2(\diam Q)^2$$ by Jensen's inequality.\end{proof}

Given two windows $R$ and $Q$ with $R\subset Q$, the approximation number $\beta_2(\mu,Q)$ controls the approximation number $\beta_2(\mu,R)$ if one has control on $\diam Q / \diam R$ \textbf{and} $\mu(Q) / \mu(R)$.

\begin{lemma}\label{l:double} Let $\mu$ be a Radon measure on $\RR^n$, let $R,Q\subset\RR^n$ be a bounded Borel set of positive diameter such that $R\subset Q$ and $\mu(R)>0$. For any straight line $\ell$ in $\RR^n$, $$\beta_2(\mu,R,\ell) \leq \left(\frac{\mu(Q)}{\mu(R)}\right)^{1/2}\frac{\diam Q}{\diam R} \,\beta_2(\mu,Q,\ell).$$
\end{lemma}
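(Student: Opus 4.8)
The plan is to unwind both sides into unnormalized integrals of $\dist(\cdot,\ell)^2$ and then exploit monotonicity of the integral under the inclusion $R\subset Q$. First I would record that the definition of the beta number gives
$$\beta_2(\mu,R,\ell)^2=\frac{1}{(\diam R)^2\,\mu(R)}\int_R \dist(y,\ell)^2\,d\mu(y),$$
and likewise, with $R$ replaced by $Q$,
$$\int_Q \dist(y,\ell)^2\,d\mu(y)=(\diam Q)^2\,\mu(Q)\,\beta_2(\mu,Q,\ell)^2.$$
Since $\mu$ is Radon and $Q$ is bounded, this last quantity is finite, and since $\mu(Q)\geq\mu(R)>0$ and $\diam R,\diam Q>0$, every normalizing factor appearing below is positive and finite.

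The main step is the elementary inequality
$$\int_R \dist(y,\ell)^2\,d\mu(y)\leq \int_Q \dist(y,\ell)^2\,d\mu(y),$$
which is immediate from $R\subset Q$ and the nonnegativity of the integrand $\dist(\cdot,\ell)^2$. Substituting the second display into the right-hand side of this inequality and then dividing by $(\diam R)^2\,\mu(R)$ yields
$$\beta_2(\mu,R,\ell)^2\leq \frac{\mu(Q)}{\mu(R)}\left(\frac{\diam Q}{\diam R}\right)^2\beta_2(\mu,Q,\ell)^2.$$
Taking square roots gives the claimed bound.

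There is no genuine obstacle here: the entire content is monotonicity of the integral, together with the bookkeeping of the scale factor $\diam Q/\diam R$ and the mass factor $(\mu(Q)/\mu(R))^{1/2}$ that arise precisely because $\beta_2(\mu,R,\cdot)$ and $\beta_2(\mu,Q,\cdot)$ are normalized by different denominators. The only point deserving a moment's care is confirming that all denominators are nonzero and finite, which I would justify from $\mu(R)>0$, the positivity of $\diam R$ and $\diam Q$, and the local finiteness of the Radon measure $\mu$ on the bounded set $Q$.
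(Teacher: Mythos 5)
Your proof is correct and is exactly the argument the paper has in mind: the paper's entire proof is the sentence ``This is immediate from monotonicity of the integral,'' and your write-up simply fills in the bookkeeping of the normalizing factors, together with the (correct) observation that $\mu(Q)\geq\mu(R)>0$ keeps everything well defined.
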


\begin{proof} This is immediate from monotonicity of the integral. \end{proof}

Now suppose that a window $Q$ contains subregions $R_1,\dots, R_m$ with $\diam R_i \sim \diam Q$. (For example, imagine that $Q$ is a dyadic cube and $R_1,\dots,R_m$ are its dyadic descendants through some fixed number of generations.) Let $z_{R_1},\dots,z_{R_m}$ denote the $\mu$ centers of mass of $R_1,\dots,R_m$, respectively. Roughly speaking, if $\mu$ is a doubling measure, then we have a uniform bound on $\mu(Q)/\mu(R_i)$, and thus, by Lemmas \ref{l:lerman} and \ref{l:double}, $$\dist(z_{R_i},\ell_Q) \leq 2 \beta_2(\mu,Q) \diam Q\quad\text{for all }1\leq i\leq m,$$ where is $\ell_Q$ is any line such that $\beta_2(\mu,Q,\ell_Q)\leq 2 \beta_2(\mu,Q)$. In other words, from just the doubling of $\mu$ and the definition of $\beta_2(\mu,Q)$, we can find a line $\ell_Q$ such that $\beta_2(\mu,Q)$ controls the distance of $z_{R_1},\dots,z_{R_m}$ to $\ell_Q$. For a general Radon measure $\mu$ on $\RR^n$, it is not possible to uniformly bound the ratios $\mu(Q)/\mu(R_i)$, and we cannot hope to control the distance of the points $z_{R_1},\dots,z_{R_m}$ to a common line using the number $\beta_2(\mu,Q)$ alone. Thus, following \cite{BS3}, we modify the definition of $\beta_2(\mu,Q)$ to impose the missing control:

Let $Q\subset\RR^n$ be a dyadic cube. We say that a dyadic cube $R\subset\RR^n$ is \emph{nearby} $Q$ and write $R\in\Delta^*(Q)$ if \begin{itemize}
\item $\diam R=\diam Q$ or $\diam R=\frac{1}{2}\diam Q$; and,
\item the concentric dilate $3R$ is contained in the concentric dilate $1600\sqrt{n} Q$.
\end{itemize} Given a Radon measure $\mu$ on $\RR^n$ and a dyadic cube $Q\subset\RR^n$, we define $\beta_2^*(\mu,Q)$ by $$\beta_2^*(\mu,Q)^2 := \inf_\ell \sup_{R\in\Delta^*(Q)} \beta_2(\mu,R,\ell)^2 \min\left\{\frac{\mu(3R)}{\diam R},1\right\}.$$ Let us temporarily ignore the truncation weight $\min\{\mu(3R)/\diam R,1\}$ and see what we have gained by using nearby cubes $R$ in the definition of $\beta_2^*(\mu,Q)$. Here we are using a dyadic cube $Q\subset\RR^n$ as a convenient short hand to represent a location and scale in $\RR^n$. Each dyadic cube $Q$ can be viewed as the center of a window $1600\sqrt{n}Q$ that contains the subregions $3R$ for all $R\in\Delta^*(Q)$. The beta number $\beta^*(\mu,Q)$ is ``anisotropic" because one normalizes the defining integrals for the $\beta_2(\mu,3R,\ell)$ independently within each subregion of the window $1600\sqrt{n}Q$, whereas the beta number $\beta_2(\mu,1600\sqrt{n}Q)$ for the full window uses a single normalization. From the definition of $\beta_2^*(\mu,Q)$, we can find a line $\ell$ such that $$ \beta_2(\mu,3R,\ell_Q)\min \left\{\left(\frac{\mu(3R)}{\diam R}\right)^{1/2},1\right\} \leq 2\beta_2^*(\mu,Q)$$ for all $R\in\Delta^*(Q)$. Thus, by Lemma \ref{l:lerman}, we can control the distance of $z_{3R}$ to $\ell_Q$ for all subregions $3R$ of $1600\sqrt{n}Q$ corresponding to $R\in\Delta^*(Q)$ (such that $\mu(3R)/\diam R\gtrsim 1$) using $\beta_2^*(\mu,Q)$. That is, we get uniform control of multiple $\mu$ centers of mass to a common line using a single approximation number, without needing to assume that $\mu$ is doubling!

For every Radon measure $\mu$ on $\RR^n$, we now define the \emph{(dyadic) density-normalized Jones function} $J^*_2(\mu,\cdot):\RR^n\rightarrow[0,\infty]$ associated to the numbers $\beta^*(\mu,Q)$  by $$J_2^*(\mu,x) := \sum_{\stackrel{Q\text{ dyadic}}{\side Q\leq 1}} \beta_2^*(\mu,Q)^2 \frac{\diam Q}{\mu(Q)} \chi_Q(x)\quad\text{for all }x\in\RR^n.$$ Suppose that $Q_0$ is a dyadic cube of side length at most 1 and let $N<\infty$. Integrating $J_2^*(\mu,x)$ on the set $\{x\in Q_0: J_2^{*}(\mu,x)\leq N\}$ with respect to $\mu$, one obtains $$\sum_{Q\in\mathcal{T}} \beta_2^*(\mu,Q)^2\diam Q\leq N\mu(Q_0)<\infty,$$ where $\mathcal{T}$ is the tree of dyadic cubes $Q\subset Q_0$ such that $$Q\cap \{x\in Q_0: J_2^{*}(\mu,x)\leq N\}\neq\emptyset.$$ For each $c>0$, let $\mathcal{T}_c$ be a maximal subtree where $\mu(3Q)/\diam Q\geq c$ for all $Q\in\mathcal{T}_c$. The argument that we sketched in the previous paragraph lets us control the distance of the $\mu$ centers of mass of triples of nearby cubes in $\mathcal{T}_c$ in terms of the numbers $\beta_2^*(\mu,Q)$. A technical extension of Jones' original traveling salesman construction proved in \cite{BS3} then lets us build a rectifiable curve $\Gamma$ containing the leaves of the tree $\mathcal{T}_c$. (The constant $1600\sqrt{n}$ and number of generations of dyadic cubes (2) appearing in the definition of nearby cubes is chosen so that there are sufficiently many data points to enter into the extended traveling salesman construction, see \cite[Proposition 3.6]{BS3}.) Running this procedure over a suitable countable choice of parameters, we arrive at Badger and Schul's solution to the identification problem for 1-rectifiable Radon measures:

\begin{theorem}[see {\cite[Theorem A]{BS3}}]\label{t:big} Let $\mu$ be a Radon measure on $\RR^n$. Then $$\mu^1_\rect = \mu\res\{x\in\RR^n:\lD1(\mu,x)>0\text{ and }J_2^*(\mu,x)<\infty\},$$ $$\mu^1_\pu = \mu\res\{x\in\RR^n : \lD1(\mu,x)=0\text{ or }J_2^*(\mu,x)=\infty\}.$$
\end{theorem}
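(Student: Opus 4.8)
The plan is to deduce both identities at once from the uniqueness of the decomposition in Proposition~\ref{p:decomp}. Write $G:=\{x\in\RR^n:\lD1(\mu,x)>0\text{ and }J_2^*(\mu,x)<\infty\}$ and $B:=\RR^n\setminus G$, so that $\mu=\mu\res G+\mu\res B$. It suffices to prove the two complementary claims that $\mu\res G$ is carried by rectifiable curves and that $\mu\res B$ is singular to rectifiable curves; uniqueness then forces $\mu\res G=\mu^1_\rect$ and $\mu\res B=\mu^1_\pu$, which are exactly the asserted formulas.

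For the singular (necessity) direction I would split $B=B_0\cup B_1$ with $B_0:=\{x:\lD1(\mu,x)=0\}$ and $B_1:=\{x:\lD1(\mu,x)>0\text{ and }J_2^*(\mu,x)=\infty\}$. Lemma~\ref{l:larp} gives that $\mu\res B_0$ is purely $1$-unrectifiable, so it remains to show $\mu\res B_1$ is singular to rectifiable curves; equivalently, wherever a piece of $\mu$ is carried by a rectifiable curve and has positive lower density, its Jones function must be finite $\mu$-a.e., so that no positive-measure subset of $B_1$ can lie on a curve. This is the measure-theoretic counterpart of the lower bound $S_E\lesssim_n\Haus^1(\Gamma)$ in Theorem~\ref{t:tst}. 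By countable additivity of the carrying relation I would reduce to $\mu$ carried by a single rectifiable curve $\Gamma$; on the set where $\lD1(\mu,\cdot)>0$ the density ratios $\mu(3R)/\diam R$ are bounded below, so $\beta_2^*(\mu,Q)$ is comparable to the set beta number $\beta_\Gamma$ at nearby scales, and the finiteness $S_\Gamma<\infty$ transfers to $\sum_{Q\ni x}\beta_2^*(\mu,Q)^2\,\diam Q/\mu(Q)<\infty$ for $\mu$-a.e.\ such $x$. Thus $\mu\res B$ is purely $1$-unrectifiable.

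The carrying (sufficiency) direction is where the real work lies, and here I would implement the construction sketched above. Fix a dyadic cube $Q_0$ of side at most $1$, a threshold $N<\infty$, and a density parameter $c>0$. Integrating $J_2^*(\mu,\cdot)$ against $\mu$ over $\{x\in Q_0:J_2^*(\mu,x)\leq N\}$ yields $\sum_{Q\in\mathcal{T}}\beta_2^*(\mu,Q)^2\diam Q\leq N\mu(Q_0)<\infty$, where $\mathcal{T}$ is the tree of dyadic $Q\subseteq Q_0$ meeting this set. Passing to a maximal subtree $\mathcal{T}_c$ on which $\mu(3Q)/\diam Q\geq c$ makes the truncation weight in $\beta_2^*$ inactive, so Lemmas~\ref{l:lerman} and~\ref{l:double} together with the anisotropic, cube-by-cube normalization let me pin the $\mu$ centers of mass of all nearby cubes $R\in\Delta^*(Q)$ to a single line $\ell_Q$ with error $\lesssim\beta_2^*(\mu,Q)\diam Q$. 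Feeding these center-of-mass data points and the summable weights $\beta_2^*(\mu,Q)^2\diam Q$ into the extended traveling salesman construction of \cite[Proposition~3.6]{BS3} produces a rectifiable curve $\Gamma=\Gamma(Q_0,N,c)$ through the leaves of $\mathcal{T}_c$ with $\Haus^1(\Gamma)\lesssim_n\diam Q_0+\sum_{Q\in\mathcal{T}_c}\beta_2^*(\mu,Q)^2\diam Q<\infty$. Letting $N\uparrow\infty$ and $c\downarrow 0$ along a countable set and ranging $Q_0$ over all dyadic cubes of side at most $1$ produces countably many rectifiable curves; the hypothesis $\lD1(\mu,x)>0$, via its link to the ratios $\mu(3Q)/\diam Q$, guarantees that $\mu$-a.e.\ point of $G$ persists in some $\mathcal{T}_c$ down to scale zero and is therefore captured, so $\mu\res G$ is carried by $\mathcal{N}$.

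I expect the extended traveling salesman construction to be the main obstacle. Unlike Jones' original argument, the points to be connected are $\mu$ centers of mass rather than points of a prescribed set, the beta numbers are normalized anisotropically within each window, and $\mu$ carries no doubling hypothesis; the delicate issue is to verify that at every relevant scale there are enough well-separated center-of-mass data points to glue short arcs while keeping the total length bounded by $\diam Q_0+\sum_Q\beta_2^*(\mu,Q)^2\diam Q$. Arranging the combinatorics of nearby cubes---and in particular choosing the constant $1600\sqrt{n}$ and the two generations allowed in $\Delta^*(Q)$---so as to supply exactly this many usable data points is the technical heart of the argument in \cite{BS3}.
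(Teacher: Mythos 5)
Your overall architecture is the same as the one this survey sketches (and which is carried out in full in \cite{BS3}): split off $\{\lD1(\mu,\cdot)=0\}$ via Lemma \ref{l:larp}, obtain the packing condition $\sum_{Q\in\mathcal{T}}\beta_2^*(\mu,Q)^2\diam Q\leq N\mu(Q_0)$ by integrating $J_2^*$ over a sublevel set, pass to maximal subtrees $\mathcal{T}_c$ on which the truncation weight is inactive, feed the centers of mass into the extended traveling salesman construction of \cite[Proposition 3.6]{BS3}, take a countable union over parameters, and for necessity adapt the ``necessary'' half of Theorem \ref{t:tst} to measures. The sufficiency half of your sketch matches the paper's.

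The necessity half, however, contains two concrete missteps. First, you cannot ``reduce to $\mu$ carried by a single rectifiable curve'': $J_2^*(\mu,x)$ and the numbers $\beta_2(\mu,R,\ell)$ are computed with the \emph{full} measure $\mu$, so replacing $\mu$ by $\mu\res\Gamma$ changes the quantity you must bound --- finiteness of $J_2^*(\mu\res\Gamma,\cdot)$ does not imply finiteness of $J_2^*(\mu,\cdot)$, since the discarded (possibly purely unrectifiable) mass still enters every beta number and every normalization $\diam Q/\mu(Q)$. The statement actually proved in \cite[\S4]{BS3}, and quoted in this survey, is that $\int_\Gamma J_2^*(\mu,x)\,d\mu(x)<\infty$ for an \emph{arbitrary} Radon measure $\mu$ and \emph{any} rectifiable curve $\Gamma$; the localization is only in the domain of integration, never in the measure defining $J_2^*$. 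Second, positive lower density at $x$ does not give a lower bound on $\mu(3R)/\diam R$ for all $R\in\Delta^*(Q)$ with $Q\ni x$: those nearby cubes need not contain $x$ and may carry almost no mass. If your claim were correct, $\beta_2^*$ and $\beta_2^{**}$ would be comparable on $\{\lD1(\mu,\cdot)>0\}$, and since $\mu^1_\rect$-a.e.\ point has positive lower density by Lemma \ref{l:larp}, Theorem \ref{t:big} would immediately yield $J_2^{**}(\mu,x)<\infty$ at $\mu^1_\rect$-a.e.\ $x$ --- which the paper explicitly states is an open problem (see the discussion after Theorem \ref{t:star2}). The truncation weight $\min\{\mu(3R)/\diam R,1\}$ exists precisely to discount low-density nearby cubes that positive lower density at the center point cannot exclude, and making the necessity argument work in their presence is the substance of \cite[\S4]{BS3}.
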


\begin{remark}While Theorem \ref{t:big} cannot be directly applied to $\sigma$-finite Borel measures that are not locally finite, we note that it can be used indirectly, as follows. Suppose that $\nu$ is an infinite, $\sigma$-finite Borel measure on $\RR^n$. Then there exists a countable measurable partition of $\RR^n$ into disjoint Borel sets $\{X_i\}_1^\infty$ such that $0<\nu(X_i)<\infty$ for all $i\geq 1$. Suppose we can find such a partition. Then the measure $$\rho:=\sum_{i=1}^\infty \frac{1}{2^i\nu(X_i)}\nu\res X_i$$ is a Borel probability measure such that $\nu\ll\rho\ll \nu$. Theorem \ref{t:big} identifies a Borel set $A_\rho$ such that $\rho^1_\rect = \rho\res A_\rho$ and $\rho^1_\pu=\rho\res(\RR^n\setminus A_\rho)$. Since $\nu\ll\rho$, it follows that $$\nu^1_\rect=\nu\res A_\rho\quad\text{and}\quad\nu^1_\pu = \nu\res (\RR^n\setminus A_\rho),$$ as well (which we leave as a simple exercise for the reader). In this sense, because we can solve the identification problem for any finite Borel measure $\rho$ on $\RR^n$, we can also solve the problem for any $\sigma$-finite Borel measure $\nu$ on $\RR^n$.\end{remark}

For every Radon measure $\mu$ on $\RR^n$ and dyadic cube $Q\subset\RR^n$, one can define a variant $\beta^{**}_2(\mu,Q)$ of $\beta^*(\mu,Q)$ without the truncation weight by $$\beta_2^{**}(\mu,Q) := \inf_\ell \sup_{R\in\Delta^*(Q)} \beta_2(\mu,3R,\ell).$$ Also define the associated density-normalized Jones function $J^{**}_2(\mu,\cdot):\RR^n\rightarrow[0,\infty]$ by $$J_2^{**}(\mu,x) := \sum_{\stackrel{Q\text{ dyadic}}{\side Q\leq 1}} \beta_2^{**}(\mu,Q)^2 \frac{\diam Q}{\mu(Q)} \chi_Q(x)\quad\text{for all }x\in\RR^n.$$ The comparisons $\beta_2^*(\mu,Q)\leq \beta_2^{**}(\mu,Q)$ and $J_2^*(\mu,x)\leq J_2^{**}(\mu,x)$ hold for all $\mu$, $Q$, and $x$. In \cite{BS3}, the authors proved the following theorems, which also follow from the argument outlined above. (In fact, R. Schul and I discovered Theorem \ref{t:star2} before Theorem \ref{t:big}.)

\begin{theorem}[see {\cite[Theorem D]{BS3}}; the Traveling Salesman Theorem for Measures] \label{t:tstm} Let $\mu$ be a finite Borel measure on $\RR^n$ with bounded support. Then $\mu$ is carried by a rectifiable curve (i.e.~there is a rectifiable curve $\Gamma$ such that $\mu(\RR^n\setminus\Gamma)=0$) if and only if $$S_2^{**}(\mu):=\sum_{Q\text{ dyadic}} \beta_2^{**}(\mu,Q)^2\diam Q<\infty.$$ Moreover, if $S_2^{**}(\mu)<\infty$, then there is a rectifiable curve $\Gamma$ in $\RR^n$ carrying $\mu$ such that $\Haus^1(\Gamma)\lesssim_n \diam(\spt\mu) + S_2^{**}(\mu)$\end{theorem}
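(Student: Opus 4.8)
The statement is a biconditional, so the plan is to prove the two implications separately, with the quantitative length bound emerging from the harder (sufficiency) direction.

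\emph{Necessity (the easy direction).} Suppose $\mu$ is carried by a rectifiable curve; I want $S_2^{**}(\mu)<\infty$. The plan is to dominate each $\beta_2^{**}(\mu,Q)$ by the geometric (sup-type) beta number $\beta_{\spt\mu}$ of the bounded set $\spt\mu$ on a slightly enlarged window, and then quote the necessity half of Theorem \ref{t:tst}. Concretely, fix a dyadic cube $Q$ and let $\ell_Q$ be a line nearly optimal for $\beta_{\spt\mu}(1600\sqrt n\,Q)$. For every $R\in\Delta^*(Q)$ with $\mu(3R)>0$ we have $3R\subset 1600\sqrt n\,Q$ and $\diam Q/\diam R\leq 2$, so for $\mu$-a.e.\ $y\in 3R$ (which lies in $\spt\mu\cap 1600\sqrt n\,Q$),
$$\frac{\dist(y,\ell_Q)}{\diam(3R)} \leq \frac{\beta_{\spt\mu}(1600\sqrt n\,Q)\,\diam(1600\sqrt n\,Q)}{\diam(3R)} \lesssim_n \beta_{\spt\mu}(1600\sqrt n\,Q).$$
Integrating gives $\beta_2(\mu,3R,\ell_Q)\lesssim_n \beta_{\spt\mu}(1600\sqrt n\,Q)$ uniformly in $R$, hence $\beta_2^{**}(\mu,Q)\lesssim_n\beta_{\spt\mu}(1600\sqrt n\,Q)$. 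Summing, $S_2^{**}(\mu)\lesssim_n\sum_Q\beta_{\spt\mu}(1600\sqrt n\,Q)^2\diam Q$. Since $\spt\mu$ is bounded and contained in a rectifiable curve, Theorem \ref{t:tst}, together with the standard fact that enlarging the dilation constant in the traveling salesman sum changes it by at most a dimensional factor, shows this last sum is finite. I expect the only mild annoyance here to be the change-of-dilation comparison, which is routine bounded-overlap bookkeeping.

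\emph{Sufficiency, the geometric input.} Now assume $S_2^{**}(\mu)<\infty$; I must build a rectifiable curve carrying $\mu$ with the stated length bound. The heart of the matter is to convert the analytic flatness encoded by $\beta_2^{**}$ into genuine geometric flatness of a cloud of points on which the traveling salesman construction can act. The natural points are the $\mu$-centers of mass $z_{3R}$. For each $Q$, let $\ell_Q$ nearly attain the infimum defining $\beta_2^{**}(\mu,Q)$. Then for every $R\in\Delta^*(Q)$ with $\mu(3R)>0$, Lemma \ref{l:lerman} and the definition of $\beta_2^{**}$ give
$$\dist(z_{3R},\ell_Q) \leq \beta_2(\mu,3R,\ell_Q)\,\diam(3R) \lesssim_n \beta_2^{**}(\mu,Q)\,\diam Q.$$
Crucially, and in contrast to the truncated number $\beta_2^*$, this control is unconditional: no lower bound on the density ratio $\mu(3R)/\diam R$ is needed. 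Thus every $Q$ furnishes a single line $\ell_Q$ to which all the centers of mass of its nearby triples lie close, with error summable against $\diam Q$.

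\emph{Sufficiency, the construction (the main obstacle).} With these flatness estimates in hand, I would feed the multiresolution family of centers of mass into the extended Jones traveling salesman construction of \cite{BS3}, the refinement of the argument underlying Theorem \ref{t:tst}: one connects the points $z_{3R}$ across consecutive generations, uses the estimate above to bound the length of the edges added at scale $\diam Q$ by $O_n\big(\beta_2^{**}(\mu,Q)^2\,\diam Q\big)$ beyond a base cost of order $\diam(\spt\mu)$, and passes to a limit of the resulting approximating polygonal curves. Summing the increments yields a rectifiable curve $\Gamma$ with
$$\Haus^1(\Gamma) \lesssim_n \diam(\spt\mu) + \sum_{Q\text{ dyadic}}\beta_2^{**}(\mu,Q)^2\,\diam Q = \diam(\spt\mu)+S_2^{**}(\mu).$$
This is the step I expect to be the main obstacle: verifying that the centers of mass at successive scales are close enough to be joined with controlled length, and that the edge-length bookkeeping sums correctly, is precisely where the constants $1600\sqrt n$ and the two generations of nearby cubes in $\Delta^*(Q)$ are used, since they guarantee that each window carries enough overlapping data points to drive the construction. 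Rather than redevelop this machinery, I would invoke \cite[Proposition 3.6]{BS3}. Finally, I would check that $\Gamma$ carries $\mu$: for $\mu$-a.e.\ $x$ and each generation $k$, the dyadic cube $Q(x,k)$ of side $2^{-k}$ containing $x$ has $z_{Q(x,k)}\in\overline{Q(x,k)}$, so $|z_{Q(x,k)}-x|\leq\diam Q(x,k)\to 0$; since the construction places each such center of mass on, or within a distance tending to $0$ with the scale of, the closed set $\Gamma$, we get $x=\lim_k z_{Q(x,k)}\in\Gamma$, whence $\mu(\RR^n\setminus\Gamma)=0$, completing the sufficiency direction and the biconditional together with the length estimate.
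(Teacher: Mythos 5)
Your proposal is correct and follows essentially the same route the paper outlines: for sufficiency you use Lemma \ref{l:lerman} together with the absence of the truncation weight in $\beta_2^{**}$ to get unconditional control of the centers of mass $z_{3R}$, $R\in\Delta^*(Q)$, by a single line with error $\lesssim_n \beta_2^{**}(\mu,Q)\diam Q$, and then invoke the extended traveling salesman construction of \cite[Proposition 3.6]{BS3}, exactly as the discussion preceding Theorem \ref{t:tstm} does. For necessity, your direct domination $\beta_2^{**}(\mu,Q)\lesssim_n \beta_{\spt\mu}(1600\sqrt{n}\,Q)$ (valid because $\spt\mu\subseteq\Gamma$ when $\mu$ is carried by a single closed curve $\Gamma$), combined with the necessary half of Theorem \ref{t:tst} and the routine change-of-dilation comparison, is the intended argument, and it is worth noting that this is precisely where the single-curve hypothesis enters: for measures carried by countably many curves this domination fails, which is why Theorem \ref{t:big} must instead insert the truncation weight and integrate $J_2^*$ along curves as described after Theorem \ref{t:star2}.
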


\begin{theorem}[see {\cite[\S5]{BS3}}] \label{t:star2} Let $\mu$ be a Radon measure on $\RR^n$. Then we have $\mu\res \left\{x: J_2^{**}(\mu,x)<\infty\right\}$ is 1-rectifiable. (Hence $\mu\res \left\{x: J_2^{**}(\mu,x)<\infty\right\}\leq\mu^1_\rect$.)
\end{theorem}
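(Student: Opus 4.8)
The plan is to cover $E:=\{x\in\RR^n: J_2^{**}(\mu,x)<\infty\}$, up to a $\mu$-null set, by countably many rectifiable curves. First I would fix a countable enumeration of pairs $(Q_0,N)$, where $Q_0$ ranges over dyadic cubes with $\side Q_0\le 1$ and $N$ over positive integers, and set $G_{Q_0,N}:=\{x\in Q_0:J_2^{**}(\mu,x)\le N\}$, so that $E=\bigcup_{Q_0,N}G_{Q_0,N}$. Since $\mu$ is Radon and $Q_0$ is bounded, each $\mu\res G_{Q_0,N}$ is a finite measure with bounded support, the setting of Theorem \ref{t:tstm}. I would not, however, apply Theorem \ref{t:tstm} to the restriction $\nu:=\mu\res G_{Q_0,N}$ directly: the numbers $\beta_2(\nu,3R,\ell)$ are normalized by $\nu(3R)$ rather than $\mu(3R)$ and so do not compare cleanly to $\beta_2(\mu,3R,\ell)$ on cubes where $\nu(3R)\ll\mu(3R)$. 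Instead I would run the curve construction that underlies Theorem \ref{t:tstm} on the tree data of $\mu$ itself.

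The second step turns the pointwise bound $J_2^{**}\le N$ on $G_{Q_0,N}$ into summable Jones data. Integrating the defining series of $J_2^{**}$ against $\mu$ over $G_{Q_0,N}$ gives
$$\sum_{\stackrel{Q\text{ dyadic}}{\side Q\le 1}}\beta_2^{**}(\mu,Q)^2\,\frac{\diam Q}{\mu(Q)}\,\mu(Q\cap G_{Q_0,N})\le N\,\mu(Q_0)<\infty.$$
To discard the density factor $\mu(Q\cap G_{Q_0,N})/\mu(Q)$, I would apply the Lebesgue density theorem for $\mu$: at $\mu$-a.e.\ $x\in G_{Q_0,N}$ this ratio tends to $1$ as the dyadic cubes $Q\ni x$ shrink to $x$. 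A stopping-time decomposition then writes $G_{Q_0,N}$, modulo a $\mu$-null set, as a countable union of pieces $G^{(k)}$, each carried by a tree $\mathcal{T}^{(k)}$ of dyadic descendants of a top cube on which $\mu(Q\cap G_{Q_0,N})\ge\tfrac12\mu(Q)$. On each such tree the display yields $\sum_{Q\in\mathcal{T}^{(k)}}\beta_2^{**}(\mu,Q)^2\diam Q\le 2N\,\mu(Q_0)<\infty$.

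With summable data on $\mathcal{T}^{(k)}$, the third step builds a curve. For each $Q\in\mathcal{T}^{(k)}$ I would choose a line $\ell_Q$ nearly attaining the infimum defining $\beta_2^{**}(\mu,Q)$. Because $\beta_2^{**}$ carries no truncation weight, Lemma \ref{l:lerman} applies unconditionally and shows that every $\mu$ center of mass $z_{3R}$ with $R\in\Delta^*(Q)$ lies within $\lesssim\beta_2^{**}(\mu,Q)\diam Q$ of $\ell_Q$, with no doubling or lower-density hypothesis. Feeding these lines, scales, and controlled centers of mass into the technical extension of Jones' traveling salesman construction from \cite{BS3} produces a rectifiable curve $\Gamma^{(k)}$ passing through the centers of mass of the cubes of $\mathcal{T}^{(k)}$ with $\Haus^1(\Gamma^{(k)})\lesssim_n\diam Q_0+\sum_{Q\in\mathcal{T}^{(k)}}\beta_2^{**}(\mu,Q)^2\diam Q<\infty$.

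Finally I would check that $\Gamma^{(k)}$ carries $\mu\res G^{(k)}$. For $\mu$-a.e.\ $x\in G^{(k)}$ the shrinking dyadic cubes containing $x$ belong to $\mathcal{T}^{(k)}$, their $\mu$ centers of mass converge to $x$, and each stays within a distance controlled by $\beta_2^{**}(\mu,Q)$ of $\Gamma^{(k)}$; since $\sum_{Q\in\mathcal{T}^{(k)}}\beta_2^{**}(\mu,Q)^2\diam Q<\infty$ forces these distances to vanish along $\mu$-a.e.\ $x$, each such $x$ lies on the closed curve $\Gamma^{(k)}$, so $\mu\res G^{(k)}(\RR^n\setminus\Gamma^{(k)})=0$. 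As $E$ is a countable union of the $G_{Q_0,N}$ and each $G_{Q_0,N}$ is, mod $\mu$-null, a countable union of the $G^{(k)}$, the measure $\mu\res E$ is carried by countably many rectifiable curves and hence is 1-rectifiable. The main obstacle is the construction input of \cite{BS3}: the extension of Jones' argument that assembles tree data with controlled centers of mass into one rectifiable curve of controlled length, together with the a.e.\ convergence argument placing the mass on $\Gamma^{(k)}$. By comparison, the decomposition and the center-of-mass estimate from Lemma \ref{l:lerman} are routine.
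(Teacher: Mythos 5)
Your proposal is correct and follows essentially the same route as the paper's outline of the argument from \cite[\S5]{BS3}: integrate $J_2^{**}$ over a sublevel set inside a dyadic cube to obtain a summable $\beta$-sum over a tree of dyadic cubes, control the $\mu$ centers of mass via Lemma \ref{l:lerman}, and feed this data into the extended traveling salesman construction to produce curves carrying the measure. Your explicit use of the Lebesgue density theorem and a stopping time to pass to subtrees on which $\mu(Q\cap G_{Q_0,N})\geq\tfrac12\mu(Q)$ correctly supplies a step the survey glosses over, and you rightly note that the further decomposition into subtrees with $\mu(3Q)/\diam Q\geq c$ is only needed for the truncated numbers $\beta_2^{*}$ and not for $\beta_2^{**}$, which carries no truncation weight.
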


One difference between Theorem \ref{t:big} and Theorem \ref{t:star2} is that the latter does not require a bound on the lower Hausdorff density of $\mu$. It is currently an open problem to decide whether or not for every Radon measure $\mu$ on $\RR^n$, $$J_2^{**}(\mu,x)<\infty\quad\text{at $\mu^1_\rect$-a.e.~$x\in\RR^n$}.$$ In other words, it is currently unknown whether or not the sufficient condition for a measure to be 1-rectifiable in Theorem \ref{t:star2} is also a necessary condition. To overcome this gap and obtain a characterization, R. Schul and I (motivated by our earlier work) inserted the truncation weight $$\min\{\mu(3R)/\diam R,1\}$$ into the definition of $\beta^{**}(\mu,Q)$ to obtain $\beta^*(\mu,Q)$. This was plausible from the point of view of retaining the sufficient condition for rectifiability established in Theorem \ref{t:star2} by Lemma \ref{l:larp}. With the addition of the truncation weight, one can then follow the proof of the main theorem in \cite{BS} (using the ``necessary" half of Theorem \ref{t:tst}) and show that $$\int_\Gamma J_2^*(\mu,x)\,d\mu(x)<\infty$$ for any rectifiable curve $\Gamma$ in $\RR^n$, from which it follows that $J_2^*(\mu,x)<\infty$ at $\mu^1_\rect$-a.e.~$x$. See \cite[\S4]{BS3} for details. The idea of discounting the linear approximation numbers at scales of small density also appears in the recent work  of Naber and Valtorta \cite{NV} on a measure-theoretic version of the Reifenberg algorithm.

\begin{remark} The resolution of the identification problem for 1-rectifiable Radon measures provides a template for attacking similar problems. To solve the identification problem for Radon measures carried by a family of sets $\mathcal{N}$ in a space $\XX$, there are three basic steps.
First, find a suitable characterization of \emph{subsets} of the sets in $\mathcal{N}$. Second, transform the result for sets into a characterization of \emph{doubling measures} carried by $\mathcal{N}$. Third, introduce \emph{anisotropic normalizations} to promote the characterization for doubling measures to a characterization for Radon measures. For example, I expect it should be possible to follow this plan to solve the identification problem for Radon measures in $\RR^n$ that are carried by $m$-dimensional Lipschitz graphs.\end{remark}

\section{Recent progress on higher dimensional rectifiability}

Throughout this section, we let $m$ and $n$ denote integers with $1\leq m\leq n-1$. Depending on the context, one encounters three possible definitions of $m$-rectifiable measures in $\RR^n$, which coincide for $m$-dimensional Hausdorff measures on $m$-sets, but differ for locally finite measures in general.  Let $\mu$ be a Radon measure on $\RR^n$. In decreasing order of generality, we say that (using the terminology in Definition \ref{def:carry}) \begin{enumerate}
\item $\mu$ is \emph{(Lipschitz) image $m$-rectifiable} if $\mu$ is carried by images of Lipschitz maps $f:[0,1]^m\rightarrow\RR^n$;
\item $\mu$ is \emph{(Lipschitz) graph $m$-rectifiable} if $\mu$ is carried by isometric copies of graphs of Lipschitz maps $g:\RR^{m}\rightarrow\RR^{n-m}$;
\item $\mu$ is \emph{$C^1$ $m$-rectifiable} if $\mu$ is carried by $m$-dimensional embedded $C^1$ submanifolds of $\RR^n$.
\end{enumerate} That is, $(3)\Rightarrow(2)\Rightarrow(1)$ for every Radon measure $\mu$, and moreover, both implications are trivial.
For restrictions of $m$-dimensional Hausdorff measures to $m$-sets and absolutely continuous measures, the three notions of $m$-dimensional rectifiability are equivalent:

\begin{theorem}\label{t:equiv} Suppose $E\subset\RR^n$ is $\mathcal{H}^m$ measurable and $0<\Haus^m(E)<\infty$. If $\mu=\Haus^m\res E$ is image or graph $m$-rectifiable, then $\mu$ is $C^1$ $m$-rectifiable. More generally, if $\mu$ is a Radon measure on $\RR^n$ and $\mu\ll\Haus^m$, then $\mu$ is $m$-rectifiable with respect to one of the definitions (1), (2), or (3) if and only if $\mu$ is $m$-rectifiable with respect to each of (1), (2), and (3).\end{theorem}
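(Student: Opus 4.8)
The plan is to reduce everything to the single nontrivial implication $(1)\Rightarrow(3)$, since the two general implications $(3)\Rightarrow(2)\Rightarrow(1)$ are trivial, and since for absolutely continuous measures $\mu\ll\Haus^m$ the equivalence of the three notions follows once we know it for the model case $\mu=\Haus^m\res E$. So the crux is: if $\mu=\Haus^m\res E$ with $0<\Haus^m(E)<\infty$ is image $m$-rectifiable, then $\mu$ is $C^1$ $m$-rectifiable. First I would unwind the definition of carried: image $m$-rectifiability of $\Haus^m\res E$ means there are countably many Lipschitz maps $f_i\colon[0,1]^m\to\RR^n$ with $\Haus^m\bigl(E\setminus\bigcup_i f_i([0,1]^m)\bigr)=0$. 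The goal is to replace the Lipschitz images by $C^1$ submanifolds up to an $\Haus^m$-null set.

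The key mechanism is the classical $C^1$ approximation of Lipschitz maps, which I would invoke in the form: for a Lipschitz map $f\colon[0,1]^m\to\RR^n$ and any $\varepsilon>0$, there is a $C^1$ map $g\colon\RR^m\to\RR^n$ such that $\Haus^m\bigl(\{x\in[0,1]^m: f(x)\neq g(x)\}\bigr)<\varepsilon$; more precisely, one can cover $[0,1]^m$ up to a null set by countably many pieces on each of which $f$ agrees with a $C^1$ (indeed $C^1$, via the Whitney extension theorem applied at points of approximate differentiability) map. The steps in order would be: (a) by Rademacher's theorem, $f_i$ is differentiable $\mathscr{L}^m$-a.e.\ on $[0,1]^m$; (b) by the Whitney extension / Lusin-type approximation theorem for Lipschitz functions, decompose $[0,1]^m=\bigcup_k K_{i,k}\cup Z_i$ with $\mathscr{L}^m(Z_i)=0$, where on each compact $K_{i,k}$ the restriction $f_i|_{K_{i,k}}$ extends to a globally defined $C^1$ map $g_{i,k}\colon\RR^m\to\RR^n$; (c) discard the images of the degenerate set where $Df_i$ has rank $<m$, since that portion carries no $\Haus^m$ mass of $E$ by the area formula; (d) on the remaining pieces, $Dg_{i,k}$ has rank $m$, so after a further countable decomposition $g_{i,k}$ is a $C^1$ embedding on a neighborhood, and its image is (locally) an $m$-dimensional embedded $C^1$ submanifold. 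The countable collection of these submanifold pieces then carries $\mu$.

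For the general absolutely continuous case, I would argue that if $\mu\ll\Haus^m$ is $m$-rectifiable in any one of the senses (1), (2), (3), then the underlying carrying sets have the form of countably many Lipschitz images (resp.\ graphs, resp.\ $C^1$ submanifolds). Writing $A_\mu=\bigcup_i N_i$ for the carrying set, one has $\mu(\RR^n\setminus A_\mu)=0$; since $\mu\ll\Haus^m$, one may restrict attention to the $\Haus^m$-$\sigma$-finite part of $A_\mu$ and apply the case-$\Haus^m\res E$ result to each piece of a decomposition of $A_\mu$ into sets of finite $\Haus^m$ measure. This transfers the $C^1$ carrying from the Hausdorff measure to $\mu$ because $\mu\ll\Haus^m$ forces $\mu$ to vanish on any set that $\Haus^m\res A_\mu$-almost-all points miss. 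Thus $(1)\Rightarrow(3)$ upgrades from the model case to all absolutely continuous $\mu$, and combined with the trivial implications this gives the stated equivalence.

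The main obstacle I expect is step (b)--(d): carefully producing genuine embedded $C^1$ \emph{submanifolds} rather than merely $C^1$ images. A $C^1$ image can have self-intersections, folds, or critical points, so the reduction to honest submanifolds requires (i) discarding the critical set via the area formula (the image of $\{\operatorname{rank}Dg<m\}$ is $\Haus^m$-null), and (ii) a localization so that on each small piece $g$ is an immersion that is also injective, hence an embedding onto a submanifold. Handling self-intersections cleanly — ensuring that after countable subdivision each remaining piece is an embedded submanifold and that these pieces still carry the full measure — is the delicate bookkeeping. Everything else is a routine application of Rademacher's theorem, the Whitney extension theorem, the area formula, and the absolute continuity $\mu\ll\Haus^m$.
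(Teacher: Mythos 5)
Your proposal is correct and is in substance the same route the paper takes: the paper's proof of the first assertion is simply a citation of Federer \cite[3.2.29]{Federer}, whose classical proof is exactly the Rademacher--Whitney--Lusin approximation plus area-formula argument you outline (including the localization of a rank-$m$ $C^1$ map to an embedding), and the paper's second assertion is handled, as in your last paragraph, by reducing to the finite-$\Haus^m$-measure case on the $\sigma$-finite carrying set and using $\mu\ll\Haus^m$ (the paper phrases this via the Radon--Nikodym theorem). So you have correctly unpacked the cited result rather than diverged from it; no gap.
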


\begin{proof} For the first part, see Federer \cite[3.2.29]{Federer}. The second part follows from the first with the aid of the Radon-Nikodym theorem.\end{proof}

In \cite{MM1988}, Mart\'in and Mattila constructed, for all $0<s<1$, sets $E_s,F_s\subset\RR^2$ with $$0<\Haus^s(E_s)<\infty\quad\text{and}\quad0<\Haus^s(F_s)<\infty$$  such that $\Haus^s\res E_s$ is image $1$-rectifiable, but not graph 1-rectifiable, and $\Haus^s\res F_s$ is graph 1-rectifiable, but not $C^1$ 1-rectifiable. Thus, when working with general Radon measures, which are not necessarily absolutely continuous with respect to $\Haus^m$, it is important to distinguish between image, graph, and $C^1$ rectifiability.

\begin{remark}\label{rk:why} It would be nice if there were a universal convention about which version of rectifiability (image, graph, or $C^1$) is the default definition. My own preference is that image rectifiability should be default, and henceforth, will say simply that a Radon measure $\mu$ on $\RR^n$ is \emph{$m$-rectifiable} if $\mu$ is image $m$-rectifiable. There are three basic reasons. First, image rectifiability is the definition that is consistent with the convention that a $1$-rectifiable measure is a  measure carried by rectifiable curves, which is used implicitly and explicitly in the work of Besicovitch \cite{Bes28,Bes38} and Morse and Randolph \cite{MR}. Second, in the monograph \cite{Federer} that gave the field of geometric measure theory its name, Federer makes the definition that \emph{$E\subset\RR^n$ is countably $(\mu,m)$ rectifiable} if $\mu\res E$ is image $m$-rectifiable in the sense above. (That is, even though Federer defines rectifiability as a property of a set rather than as a property of a measure, he uses Lipschitz images in the definition.) Third, of the three definitions, image rectifiability is the most general. For a setting where graph rectifiability is the most natural definition, see \cite{7author}. \end{remark}

For any Radon measure $\mu$ on $\RR^n$ and integer $1\leq m\leq n-1$, let $\mu=\mu^m_\rect+\mu^m_\pu$ denote the decomposition from Proposition \ref{p:decomp}, where $\mu^m_\rect$ is carried by images of Lipschitz maps $f:[0,1]^m\rightarrow\RR$ and $\mu^m_\pu$ is singular to images Lipschitz maps $f:[0,1]^m\rightarrow\RR^n$ in the sense of Definition \ref{def:carry}. The following fundamental problem in geometric measure theory about the structure of measures in Euclidean space is wide open.

\begin{problem}[identification problem for $m$-rectifiable measures] \label{p:hd} Let $2\leq m\leq n-1$. Find geometric or measure-theoretic properties that identify $\mu^m_\rect$ and $\mu^m_\pu$ for every Radon measure $\mu$ on $\RR^n$. (Do not assume that $\mu\ll\Haus^m$.) \end{problem}

Given a Radon measure $\mu$ on $\RR^n$ and $s>0$, the lower and upper Hausdorff $s$-densities of $\mu$ are defined by $$\lD{s}(\mu,x):=\liminf_{r\downarrow 0} \frac{\mu(B(x,r))}{\omega_s r^s}\quad\text{and}\quad \uD{s}:=\limsup_{r\downarrow 0} \frac{\mu(B(x,r))}{\omega_s r^s}$$ for all $x\in\RR^n$, where $\omega_s$ is a constant depending on the normalization used in the definition of the $s$-dimensional Hausdorff measure. In particular, $\omega_s=2^s$ when  $\mathcal{H}^s$ is defined by $$\Haus^s(E)=\lim_{\delta\downarrow 0} \inf \left \{\sum_{i=1}^\infty (\diam E_i)^s: E\subset\bigcup_{i=1}^\infty E_i\text{ and }\diam E_i\leq \delta\right\}.$$
As with the case $m=1$, rectifiability of a measure restricts the appropriate lower density.

\begin{lemma}[see {\cite[Lemma 2.7]{BS}}] \label{l:marp} Let $\mu$ be a Radon measure on $\RR^n$. Then $$\mu^m_\rect \leq \mu\res\{x: \lD{m}(\mu,x)>0\}\quad\text{ and }\quad \mu\res\{x:\lD{m}(\mu,x)=0\}\leq \mu^m_\pu.$$ That is, $\mu\res\{x:\lD{m}(\mu,x)=0\}$ is purely $m$-unrectifiable, and if $\mu$ is $m$-rectifiable, then $\lD{m}(\mu,x)>0$ at $\mu$-a.e. $x\in\RR^n$. \end{lemma}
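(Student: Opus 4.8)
The plan is to observe that the two displayed inequalities are equivalent and that both reduce to a single singularity statement, which I would then prove via the relationship between the lower density $\lD{m}(\mu,\cdot)$ and the $m$-dimensional packing measure $\mathcal{P}^m$, exactly as in the case $m=1$ (Lemma \ref{l:larp}).

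First, set $E_0 := \{x \in \RR^n : \lD{m}(\mu,x) = 0\}$. By Proposition \ref{p:decomp} applied to the family $\mathcal{N}$ of images of Lipschitz maps $f : [0,1]^m \to \RR^n$, there is a Borel set $A_\mu = \bigcup_i \Sigma_i$ (a countable union of such images) with $\mu^m_\rect = \mu\res A_\mu$ and $\mu^m_\pu = \mu\res(\RR^n \setminus A_\mu)$. Both claimed inequalities are then equivalent to $\mu(E_0 \cap A_\mu) = 0$: the inequality $\mu^m_\rect \le \mu\res(\RR^n \setminus E_0)$ says precisely $\mu^m_\rect(E_0) = \mu(A_\mu \cap E_0) = 0$, while $\mu\res E_0 \le \mu^m_\pu$ says $\mu(E_0 \cap B) \le \mu((\RR^n \setminus A_\mu) \cap B)$ for all Borel $B$, which forces $\mu(E_0 \cap A_\mu)=0$ upon taking $B = A_\mu$ (and conversely). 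Hence it suffices to show $\mu(E_0 \cap \Sigma) = 0$ for every single Lipschitz image $\Sigma = f([0,1]^m)$, since then $\mu(E_0 \cap A_\mu) \le \sum_i \mu(E_0 \cap \Sigma_i) = 0$.

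Second, fix such a $\Sigma$ and set $A := E_0 \cap \Sigma$. The key input, due to Taylor and Tricot \cite{TT}, is a comparison between lower density and packing measure of the same shape that pairs upper density with Hausdorff measure: there is a constant $c = c(n,m)$ so that if $\lD{m}(\mu,x) \le \lambda$ for every $x$ in a Borel set $A$, then $\mu\res A \le c\,\lambda\,\mathcal{P}^m \res A$. Since $\lD{m}(\mu,x) = 0 \le \lambda$ for every $x \in A$ and every $\lambda > 0$, this yields $\mu(A) \le c\,\lambda\,\mathcal{P}^m(A)$ for all $\lambda > 0$; provided $\mathcal{P}^m(A) < \infty$, letting $\lambda \downarrow 0$ gives $\mu(A) = 0$.

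Third, it remains to verify $\mathcal{P}^m(\Sigma) < \infty$ (so in particular $\mathcal{P}^m(A) < \infty$). Two standard properties of packing measure, again from \cite{TT}, suffice: it does not increase dimension under Lipschitz maps, with the scaling $\mathcal{P}^m(g(S)) \le L^m \mathcal{P}^m(S)$ for an $L$-Lipschitz map $g$; and it is locally finite on $\RR^m$, where $\mathcal{P}^m$ is in fact comparable to Lebesgue measure $\mathscr{L}^m$, so that $\mathcal{P}^m([0,1]^m) < \infty$. Writing $L$ for the Lipschitz constant of $f$, we obtain $\mathcal{P}^m(\Sigma) = \mathcal{P}^m(f([0,1]^m)) \le L^m \mathcal{P}^m([0,1]^m) < \infty$, which completes the argument. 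The main obstacle is entirely contained in the packing-measure facts invoked in the second and third steps—the lower-density/packing comparison and the Lipschitz and local-finiteness behavior of $\mathcal{P}^m$; once these are in hand the proof is dimension-agnostic and identical to the case $m=1$, so no new idea is needed to pass from rectifiable curves to $m$-dimensional Lipschitz images.
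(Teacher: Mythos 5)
Your proof is correct and takes essentially the same route the paper indicates: it is the $m$-dimensional version of the argument sketched for Lemma \ref{l:larp}, resting on exactly the three ingredients named there --- the Taylor--Tricot comparison between pointwise bounds on the lower density and the packing measure $\mathcal{P}^m$, the finiteness of $\mathcal{P}^m$ on bounded subsets of $\RR^m$, and the behavior of packing measure under Lipschitz maps. The reduction via Proposition \ref{p:decomp} to showing $\mu(E_0\cap\Sigma)=0$ for a single Lipschitz image $\Sigma$, followed by letting $\lambda\downarrow 0$, correctly fills in the details the paper defers to \cite{BS}.
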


For absolutely continuous measures,  which are the Radon measures such that $\mu\ll\Haus^m$, or equivalently (see the exercises in \cite[Chapter 6]{Mattila}), the Radon measures such that $$\uD{m}(\mu,x)=\limsup_{r\downarrow 0} \frac{\mu(B(x,r))}{\omega_m r^m}<\infty\quad\text{at $\mu$-a.e. $x\in\RR^n$},$$ several solutions to Problem \ref{p:hd} are available. At a high level, one of the reasons that Problem \ref{p:hd} is easier for absolutely continuous measures is that for such measures the notions of Lipschitz image, Lipschitz graph, and $C^1$ rectifiability coincide. The following is the combined effort of several mathematicians (see below for a detailed citation).

\begin{theorem}[assorted characterizations of absolutely continuous rectifiable measures] \label{t:assort}Let $1\leq m\leq n-1$ be integers and let $\mu$ be a Radon measure on $\RR^n$. Assume $\mu\ll\Haus^m$ ($\Leftrightarrow$ $\uD{m}(\mu,x)<\infty$ at $\mu$-a.e.~$x\in\RR^n$.) Then the following are equivalent: \begin{enumerate}
\item $\mu$ is $m$-rectifiable (carried by Lipschitz images of $[0,1]^m$);
\item there is a unique $\mu$ approximate tangent $m$-plane at $\mu$-a.e. $x\in\RR^n$;
\item $\mu$ is weakly $m$-linearly approximable at $\mu$-a.e.~$x\in\RR^n$, i.e.~ $\lD{m}(\mu,x)>0$ and $$\mathrm{Tan}(\mu,x)\subseteq \{c\mathcal{H}^m\res L:c>0, L\in G(n,m)\}\quad\text{at $\mu$-a.e.~$x\in\RR^n$};$$
\item the $m$-dimensional Hausdorff density of $\mu$ exists and is positive $\mu$-a.e.: $$\liminf_{r\downarrow 0} \frac{\mu(B(x,r))}{\omega_m r^m}=\limsup_{r\downarrow 0}\frac{\mu(B(x,r))}{\omega_m r^m}>0\quad\text{at $\mu$-a.e. $x\in\RR^n$};$$
\item $\lD{m}(\mu,x)>0$ at $\mu$-a.e. $x\in\RR^n$ and the coarse density ratios are asymptotically optimally doubling in the sense that $$\lim_{r\downarrow 0}\frac{\mu(B(x,2r))}{\omega_m(2r)^m}-\frac{\mu(B(x,r))}{\omega_m r^m}=0\quad\text{ at $\mu$-a.e. $x\in\RR^n$};$$
\item $\uD{m}(\mu,x)>0$ at $\mu$-a.e. $x\in\RR^n$ and the Jones function associated to the $m$-dimensional homogeneous $L^2$ beta numbers $\beta_2^{m,h}(\mu,x,r)$ is finite $\mu$-a.e.: $$\int_0^1 \beta_2^{m,h}(\mu,x,r)^2\frac{dr}{r}<\infty\quad\text{ at $\mu$-a.e. $x\in\RR^n$.}$$
\end{enumerate}
\end{theorem}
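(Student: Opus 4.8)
The plan is to treat $m$-rectifiability (1) as a hub and to prove each equivalence $(1)\Leftrightarrow(k)$ separately, since the individual spokes draw on very different tools. The standing hypothesis $\mu\ll\Haus^m$ does a great deal of work. By Theorem \ref{t:equiv} it collapses the three notions of rectifiability, so (1) is unambiguous; and it is equivalent to $\uD m(\mu,x)<\infty$ at $\mu$-a.e.\ $x$, which lets me pass freely between statements about $\mu$ and statements about $\Haus^m\res E$ for rectifiable sets $E$ by way of the Radon--Nikodym theorem. Throughout I write $\Theta(x,r):=\mu(B(x,r))/(\omega_m r^m)$ for the coarse $m$-density ratio.

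The ``necessity'' directions $(1)\Rightarrow(k)$ form the classical part. First I would recall the structure theory of rectifiable sets (Federer \cite{Federer}, Mattila \cite{Mattila}): if $E\subset\RR^n$ is $m$-rectifiable with $\Haus^m(E)<\infty$, then at $\Haus^m$-a.e.\ $x\in E$ the set $E$ has a unique approximate tangent $m$-plane and $m$-density equal to one. Transferring these properties to $\mu\ll\Haus^m$ through the Radon--Nikodym density yields $(1)\Rightarrow(2)$; then $(1)\Rightarrow(3)$, since a unique approximate tangent plane forces every tangent measure to have the flat form $c\Haus^m\res L$; and $(1)\Rightarrow(4)$, the existence and positivity of $\lim_{r\downarrow0}\Theta(x,r)$. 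The implication $(4)\Rightarrow(5)$ is then immediate, because convergence of $\Theta(x,\cdot)$ forces $\Theta(x,2r)-\Theta(x,r)\to0$; and $(1)\Rightarrow(6)$ is the necessity half of the Jones-function characterization, due to Tolsa.

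For the ``sufficiency'' directions I would split according to difficulty. The tangential spokes $(2)\Rightarrow(1)$ and $(3)\Rightarrow(1)$ are the Marstrand--Mattila rectifiability criterion (see \cite{Mattila}): once $0<\lD m(\mu,x)\le\uD m(\mu,x)<\infty$ holds $\mu$-a.e.\ and all tangent measures are flat, $\mu$ is rectifiable. The genuinely deep spoke is $(4)\Rightarrow(1)$, which is Preiss's density theorem \cite{Preiss}, whose proof produces tangent measures of tangent measures, establishes their self-similarity, and classifies the resulting uniform measures, ultimately forcing flatness in the density range at hand. The implication $(5)\Rightarrow(1)$ is a \emph{strengthening} of $(4)\Rightarrow(1)$ (since (5) is weaker than (4)), and I would obtain it through the same circle of ideas: asymptotically optimal doubling says precisely that the coarse density ratio stabilizes across consecutive scales, which propagates to $r$-independence of the density ratios of the tangent measures, i.e.\ to their being uniform measures, whence Preiss's classification again applies. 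A purely real-variable argument cannot suffice here, since a bounded function with $\Theta(2r)-\Theta(r)\to0$ need not converge. Finally, $(6)\Rightarrow(1)$ is the sufficiency theorem of Azzam--Tolsa, with the equivalence of the square-function condition to density existence passing through Tolsa--Toro's bridge to Preiss' theorem.

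The main obstacle is unquestionably Preiss's theorem and the tangent-measure machinery surrounding it: the implications $(4)\Rightarrow(1)$, $(5)\Rightarrow(1)$, and the hard half of $(6)\Rightarrow(1)$ all reduce, in the end, to controlling iterated tangent measures and classifying uniform measures of integer density. I would stress the role of the hypothesis $\uD m(\mu,x)<\infty$ everywhere: without it the tangent-measure compactness arguments break down, and indeed none of (2)--(6) characterizes $\mu^m_\rect$ for general Radon measures, which is exactly why Problem \ref{p:hd} remains open. A clean write-up would therefore devote one short lemma to each classical spoke, quote \cite{Preiss} and Azzam--Tolsa as black boxes for the deep spokes, and isolate the single new observation needed to fold (5) into the scheme, namely that asymptotic optimal doubling together with positive lower density forces the tangent measures to be uniform.
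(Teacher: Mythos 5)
Most of your scheme coincides with the proof given in the paper, which likewise assembles the theorem from citations: $(1)\Rightarrow(2)\Rightarrow(4)$ via Rademacher, $(2)\Rightarrow(1)$ by Federer \cite{Fed47}, $(3)\Rightarrow(1)$ by Marstrand--Mattila \cite{Marstrand61,Mattila75}, $(4)\Rightarrow(3)$ by Preiss \cite{Preiss}, $(1)\Rightarrow(6)$ by Tolsa \cite{Tolsa-n}, and $(6)\Rightarrow(1)$ by Azzam--Tolsa \cite{AT}; treating these as black boxes is exactly what the paper does, and your hub-and-spoke organization is a harmless repackaging of its chain of implications. The genuine gap is in the spoke $(5)\Rightarrow(1)$ (equivalently $(5)\Rightarrow(4)$). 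You assert that the ``single new observation'' needed is that asymptotic optimal doubling together with positive lower density forces the tangent measures of $\mu$ to be uniform, after which Preiss's classification applies. That observation is false as stated. Write $\Theta(x,r)=\mu(B(x,r))/(\omega_m r^m)$. If $\nu\in\mathrm{Tan}(\mu,x)$ arises from scales $r_i\downarrow 0$ with normalizations $c_i$ such that $c_ir_i^m$ is bounded above and below (which is what $0<\lD{m}(\mu,x)\leq\uD{m}(\mu,x)<\infty$ buys you), then passing the hypothesis $\Theta(x,2r)-\Theta(x,r)\to 0$ to the limit yields only
$$\frac{\nu(B(0,2r))}{(2r)^m}=\frac{\nu(B(0,r))}{r^m}\quad\text{for all }r>0,$$
and, after the usual locality lemma for tangent measures, the same identity at every $y\in\spt\nu$. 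This says the density ratio of $\nu$ is multiplicatively $2$-periodic in $r$, not that it is constant in $r$; a uniform measure requires a single constant at every point and scale. The obstruction is exactly the phenomenon you yourself flag on the real-variable side --- a bounded $g$ with $g(2r)-g(r)\to 0$ need not converge --- and that oscillation survives the blow-up.

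Bridging from this $2$-periodicity to the existence of the density, and hence to Preiss's theorem, is precisely the content of Tolsa and Toro's theorem \cite{TT-rect}, which is what the paper cites for $(5)\Rightarrow(4)$; their argument requires a genuinely new square-function and stopping-time analysis layered on top of the tangent-measure machinery, not a one-line reduction to the uniform measure classification. Your outline becomes correct once you replace the claimed observation for spoke (5) with a citation to (or a reproduction of) \cite{TT-rect}; as written, that step would fail.
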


\begin{proof} First, let us explain the various bits of notation and terminology used to state the theorem. We say that an $m$-dimensional affine subspace $L$ of $\RR^n$ containing $x\in\RR^n$ is a \emph{$\mu$ approximate tangent $m$-plane at $x$} if $\uD{m}(\mu,x)>0$ and $$\limsup_{r\downarrow 0} \frac{\mu(B(x,r)\setminus X)}{\omega_m r^m}=0$$ for every cone of the form $X=\{y: \dist(y,L)\leq  c \dist(y,L^\perp)\}$, where $c>0$ and $L^\perp$ denotes the unique $(n-m)$-plane  containing $x$ that is orthogonal to $L$. Following \cite{Preiss}, we say that a non-zero measure $\nu$ is a \emph{tangent measure} of $\mu$ at $x$ if there exist sequences $r_i\downarrow 0$ and $0<c_i<\infty$ such that the rescaled measures $\mu_i$, defined by $$\mu_i(E)=c_i\mu(x+r_iE),$$ converge weakly to $\nu$ in the sense of Radon measures. We denote the set of all tangent measures of $\mu$ at $x$ by $\mathrm{Tan}(\mu,x)$. As usual, the set $G(n,m)$ denotes the Grassmannian of $m$-dimensional linear subspaces of $\RR^n$. Following \cite{DS91}, the homogeneous $m$-dimensional $L^2$ Jones beta number $\beta_2^{m,h}(\mu,x,r)$ is defined by \begin{equation}\label{eq:beta-m}\beta_2^{m,h}(\mu,x,r)^2:=\inf_L \int_{B(x,r)} \left(\frac{\dist(x,L)}{r}\right)^2\frac{d\mu(x)}{r^m},\end{equation} where the infimum runs over all $m$-dimensional affine subspaces of $\RR^n$.

The implications $(1)\Rightarrow(2)\Rightarrow(4)$ essentially follow from Rademacher's theorem on almost everywhere differentiability of Lipschitz maps and may be considered elementary. The implication $(2)\Rightarrow(1)$ was proved by Federer \cite{Fed47}. The implication $(2)\Rightarrow(3)$ is trivial. The implication $(3)\Rightarrow(1)$ was proved by Mattila \cite{Mattila75} (extending the special case $m=2$ and $n=3$ by Marstrand \cite{Marstrand61}). The implication $(4)\Rightarrow(3)$ was proved by Preiss \cite{Preiss}. Thus, the equivalence of (1), (2), (3), and (4) was finally completed after a span of 40 years from the 1940s to the 1980s. The essential difficulty in the final implication $(4)\Rightarrow(3)$ is the (surprising!) existence of non-flat $m$-uniform measures in $\RR^n$, i.e.~Radon measures in $\RR^n$ such that $$\mu(B(x,r))=cr^m\quad\text{for all $x\in\spt\mu$ and $r>0$},$$ for which the support of $\mu$ is \emph{not} an $m$-plane. For further background and the most recent developments on the uniform measure classification problem, including new examples of $3$-uniform measures in $\RR^n$, for each $n\geq 5$, see Nimer \cite{nimer3}. For a friendly presentation of the proof of $(1)\Leftrightarrow(2)\Leftrightarrow(3)\Leftrightarrow(4)$,  see the monograph \cite{DeLellis} by De Lellis.

The equivalence of (5) and (6) with (1), (2), (3), and (4) are recent developments. The implication $(4)\Rightarrow(5)$ is trivial. The implication $(5)\Rightarrow(4)$ was proved by Tolsa and Toro \cite{TT-rect}. The implication $(1)\Rightarrow(6)$ was proved by Tolsa \cite{Tolsa-n}. With the additional assumption $\lD{m}(\mu,x)>0$ $\mu$-a.e., the implication $(6)\Rightarrow(1)$ was proved by Pajot \cite{Pajot97} (for Radon measures of the form $\mu=\Haus^m\res E$, $E$ compact) and Badger and Schul \cite{BS2}. With the \emph{a priori} weaker assumption $\uD{m}(\mu,x)>0$ $\mu$-a.e., the implication $(6)\Rightarrow(1)$ was proved by Azzam and Tolsa \cite{AT}. To obtain the last result, Azzam and Tolsa carry out an intricate stopping time argument using David and Mattila's ``dyadic" cubes \cite{DMcubes} and David and Toro's extension of Reifenberg's algorithm to sets with holes \cite{DT}.
\end{proof}

For related work on rectifiability of  absolutely continuous measures and the theory of mass transport, see Tolsa \cite{Tolsa12}, Azzam, David, and Toro \cite{ADT1,ADT2}, and Azzam, Tolsa, and Toro \cite{ATT-alphas}. For the connection between rectifability of sets and Menger-type curvatures, see  L\'eger \cite{Leger}, Lerman and Whitehouse \cite{LW-I,LW-II}, Meurer \cite{Meurer}, and Goering \cite{max-menger}. For related results about discrete approximation and rectifiability of varifolds, see Buet \cite{Buet15}.
\bigskip

In recent work \cite{ENV}, Edelen, Naber, and Valtorta provide new sufficient conditions for qualitative and quantitative rectifiability of Radon measures $\mu$ on $\RR^n$ that do not require absolute continuity, $\mu\ll\Haus^m$ ($\Leftrightarrow \uD{m}(\mu,x)<\infty$ at $\mu$-a.e.~$x\in\RR^n$.) For simplicity, we state a special case of their main result.

\begin{theorem}[see Edelen-Naber-Valtorta {\cite{ENV}}] \label{t:env} Let $\mu$ be a Radon measure on $\RR^n$. Assume that $\lD{m}(\mu,x)<\infty$ and $\uD{m}(\mu,x)>0$ at $\mu$-a.e. $x\in\RR^n$. Then $$\int_0^1 \beta_2^{m,h}(\mu,x,r)^2\frac{dr}{r}<\infty\quad\text{ at $\mu$-a.e. $x\in\RR^n$}$$ implies $\mu$ is $m$-rectifiable (and hence $\lD{m}(\mu,x)>0$ $\mu$-a.e.)\end{theorem}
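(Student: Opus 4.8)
The plan is to build the $m$-rectifiable carrying sets by a Reifenberg-type construction driven by the beta numbers, after reducing to a situation where all relevant quantities are uniformly controlled. Since both the hypotheses and the conclusion are $\mu$-a.e.\ statements and $m$-rectifiability is stable under countable unions, I would first decompose $\RR^n$ (up to a $\mu$-null set) into countably many bounded Borel pieces $E_k$ on each of which the three quantities are uniformly controlled: $\int_0^1 \beta_2^{m,h}(\mu,x,r)^2\,\frac{dr}{r}\le M$, the lower density satisfies $\liminf_{r\downarrow 0}\mu(B(x,r))/r^m\le M$, and the upper density satisfies $\limsup_{r\downarrow 0}\mu(B(x,r))/r^m\ge\theta>0$, with $M<\infty$ and $\theta>0$ fixed on $E_k$. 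It then suffices to prove that $\mu\res E_k$ is $m$-rectifiable for each $k$.

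The engine for the core construction is the discrete/rectifiable Reifenberg theorem of Naber and Valtorta \cite{NV}: if a measure enjoys a lower mass bound at the relevant scales and the square beta numbers sum controllably against the measure, then its support is contained in a rectifiable set satisfying a packing and mass estimate. To feed this machine I would, at $\mu$-a.e.\ point of $E_k$, extract from the finite lower density a sequence of good scales $r_i\downarrow 0$ at which $\mu(B(x,r_i))\le M r_i^m$, so that the measure is not too large; from the finiteness of the Jones function, the beta numbers are small at all but a summably-controlled set of scales, which supplies the flatness input at each scale; and from positive upper density, a definite fraction of the mass concentrates at some good scale, which guarantees the construction captures positive mass. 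Gluing the $m$-plane approximations furnished by the small beta numbers across scales, via a partition of unity as in David--Toro's Reifenberg algorithm for sets with holes \cite{DT}, produces a bi-Lipschitz image of a subset of $\RR^m$---hence a Lipschitz image of $[0,1]^m$---carrying a definite portion of $\mu\res E_k$. A standard exhaustion then recovers all of $\mu\res E_k$: remove the rectifiable piece already captured and repeat on the remainder, whose mass decreases by a fixed factor at each stage.

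The main obstacle is the bookkeeping of the measure in the absence of any upper density bound. The natural packing estimate that makes the Reifenberg construction close up, namely $\sum_Q \beta_2^{m,h}(\mu,\cdot,\cdot)^2\,\mu(3Q)\lesssim\mu(B)$, is automatic under $\mu\ll\Haus^m$ (i.e.\ $\uD{m}<\infty$), as in the absolutely continuous theory of Azzam--Tolsa \cite{AT}, but it is genuinely delicate here because $\uD{m}$ may be infinite on a set of positive measure. The heart of the Edelen--Naber--Valtorta argument, which I would follow, is a stopping-time decomposition over David--Mattila's $\mu$-adapted dyadic cubes \cite{DMcubes} that isolates maximal families of cubes of controlled density: one stops when the density ratio first becomes too large, thereby confining the analysis to regions where effective upper regularity holds, and then shows that the stopped high-density regions carry only a negligible share of the mass not already accounted for on lower levels. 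Making this stopping-time scheme compatible with the Reifenberg construction---so that the mass captured on each good sub-family is a definite fraction of the total while the leftover high-density set is $\mu$-null---is the crux; once it is in place, the finiteness of $\int_0^1 \beta_2^{m,h}(\mu,x,r)^2\,\frac{dr}{r}$ and the good scales from $\lD{m}(\mu,x)<\infty$ supply exactly the per-scale flatness and mass control needed to run \cite{NV}, and Lemma \ref{l:marp} then upgrades the resulting $m$-rectifiability to the parenthetical conclusion $\lD{m}(\mu,x)>0$ $\mu$-a.e.
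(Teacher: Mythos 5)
This theorem is quoted in the survey from the Edelen--Naber--Valtorta paper \cite{ENV}; the text offers no proof of its own, only the remark that the argument rests on a quantitative Reifenberg algorithm in the spirit of \cite{DT} and \cite{NV}, and that Tolsa \cite{Tolsa-betas} later gave a second proof closer in method to \cite{AT}. Your outline is consistent with that description at the level of strategy: the reduction to bounded pieces with uniform bounds $M$ and $\theta$, the use of $\lD{m}(\mu,x)<\infty$ to extract a sequence of scales with $\mu(B(x,r_i))\le M r_i^m$, the use of $\uD{m}(\mu,x)>0$ to secure a lower mass bound along some scales, the invocation of the discrete/rectifiable Reifenberg theorem, the remove-and-repeat exhaustion, and the final appeal to Lemma \ref{l:marp} for the parenthetical conclusion are all the right moves. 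One small historical point: you have in effect merged the two known proofs --- the David--Mattila cubes and the stopping time on density ratios belong to the Tolsa/Azzam--Tolsa line, while \cite{ENV} runs a more direct covering and packing argument; either route is legitimate, but they are distinct.

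The genuine gap is that the one step you correctly identify as ``the crux'' is exactly the step you do not carry out. Everything before it is soft: restricting to $E_k$, choosing good scales, and noting that small beta numbers give flat approximating planes is routine. The entire content of the theorem is the claim that, with $\uD{m}$ possibly infinite on a set of positive measure, one can still close the packing estimate $\sum \beta_2^{m,h}(\mu,\cdot,\cdot)^2\,\mu(\cdot)\lesssim \mu(B)$ on the good region while showing that the stopped high-density cubes carry a negligible (or at least recoverable) share of the mass, and that the lower mass bound needed by the Reifenberg machine --- which \cite{NV} requires at essentially \emph{all} scales of the construction, not merely along the sequence of scales that $\uD{m}(\mu,x)>0$ provides --- can actually be arranged. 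Writing ``once it is in place, the finiteness of the Jones function and the good scales supply exactly the per-scale flatness and mass control needed'' assumes the conclusion of the hard estimate rather than proving it. As a roadmap your proposal is faithful to the literature, but as a proof it defers the theorem's actual difficulty to an unproven compatibility claim.
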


\begin{remark} Edelen, Naber, and Valtorta's proof of Theorem \ref{t:env} is based on an updated, quantitative version of the Reifenberg algorithm (cf.~\cite{DT} and \cite{NV}), the original version of which allows one to parameterize sets which are sufficiently ``locally flat" at all locations and scales. A second proof of Theorem \ref{t:env} has been provided by Tolsa \cite{Tolsa-betas} using a method closer to that of \cite{AT}. It is evident that the sufficient condition for a measure to be $m$-rectifiable in Theorem \ref{t:env} is not necessary in view of the example of Martikainen and Orponen \cite{MO} from the case $m=1$. \end{remark}

At the root, the main challenge in solving Problem \ref{p:hd} is the lack of a good Lipschitz parameterization theorem for sets of dimension $m\geq 2$. If $f:[0,1]^m\rightarrow\RR^n$ is Lipschitz, then $\Gamma=f([0,1]^m)$ is compact, connected, locally connected, and $\mathcal{H}^m(\Gamma)<\infty$. When $m=1$, the converse of this fact is also true. In fact, it has been known for over 90 years that if $\Gamma\subset\RR^n$ is a nonempty continuum with $\Haus^1(\Gamma)<\infty$, then $\Gamma$ is a Lipschitz image of $[0,1]$ (see \cite{AO} where the theorem is attributed to a paper of Wa\.{z}ewski from 1927, and for a simple proof, see the appendix of \cite{Schul-Hilbert}.) The situation in higher dimensions is quite different. For example, let $C\subset[0,1]^2$ be any self-similar Cantor set of Hausdorff dimension 1 (so that $\Haus^1\res C$ is purely 1-unrectifiable by Hutchinson \cite[\S5.4]{Hutch}.) Then $\Haus^2\res (C\times[0,1])$ is purely 2-unrectifiable by Theorem \ref{t:assort}(2). By adjoining a square to the base of $C\times[0,1]$, we obtain a set $K$, $$K:=(C\times[0,1])\cup([0,1]^2\times\{0\})\subset\RR^3,$$ which is compact, connected, path-connected, and Ahlfors 2-regular (hence $\Haus^2(\Gamma)<\infty$), but $K$ is not locally connected, and therefore, $K$ is not a Lipschitz image of $[0,1]^2$ nor a continuous image of $[0,1]$. From this one sees that being a continuum with finite or Ahlfors regular $\Haus^2$ measure does not ensure a set is locally connected. In fact, it turns out that even with an added assumption of local connectedness, one still cannot guarantee the existence of a Lipschitz parameterization for Ahlfors regular contiuum when $m\geq 2$. The following result will appear in forthcoming work by the author, Naples, and Vellis.

\begin{proposition}[see {\cite[\S9.2]{BNV}}; ``Cantor ladders"] For all $m\geq 2$, there exist compact, connected, locally connected, $m$-Ahlfors regular sets $G\subset\RR^n$ such that $G$ is not contained in the image of any Lipschitz map $f:[0,1]^m\rightarrow\RR^{m+1}$.\end{proposition}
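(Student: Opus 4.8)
The plan is to build $G$ as a single self-similar fractal — a ``ladder'' whose rails and rungs are reproduced at every generation — rather than by adjoining connecting pieces to a product such as $C\times[0,1]$. The reason for insisting on one unified self-similar pattern is a measure obstruction. If one starts from the purely unrectifiable product $C\times[0,1]$ and attaches genuinely $m$-dimensional rungs densely enough to force local connectedness (vertically spaced comparably to each gap's width, since points straddling a gap of scale $g$ sit at distance $\sim g$ and must be joined by a short path), the total $\Haus^m$ of the rungs diverges and $m$-Ahlfors regularity is destroyed; thinner rungs, on the other hand, ruin the lower regularity bound at points in their interiors. By contrast, in a genuine self-similar construction the rungs are themselves scaled copies of the generating pattern, so their measure is already accounted for by the open set condition and regularity is automatic. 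Concretely, I would fix an iterated function system of $N$ similarities of ratio $N^{-1/m}$ on $\RR^{m+1}$ whose generating pattern (i) has its pieces touching along a connected chain with rungs joining the rails at every generation (the Cantor-type branching of the rails giving the name), and (ii) places the pieces at transversal angles, so that flat $m$-planes never arise as blow-ups. I would note at the outset that $m\geq 2$ is essential: for $m=1$ a compact connected set of finite $\Haus^1$ is a Lipschitz image of $[0,1]$ (Wa\.{z}ewski), so no connected $1$-Ahlfors regular purely $1$-unrectifiable set exists.

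The elementary properties would be read off from the construction. Self-similarity with the open set condition gives that $G$ is $m$-Ahlfors regular of similarity dimension $m$, so $\Haus^m(G\cap B(x,r))\sim r^m$ at the appropriate scales, and compactness is immediate. Connectedness and, more importantly, local connectedness should be arranged so that $G$ is in fact a Peano continuum: if the chain connecting the pieces is robust at every scale — any two pieces of a given generation are joined through a bounded-length chain of touching rungs of that generation — then one can realize $G$ as a continuous image of $[0,1]^m$, and a continuous image of the Peano continuum $[0,1]^m$ is again compact, connected, and locally connected (Hahn--Mazurkiewicz). Equivalently, one verifies uniform local connectedness directly from the chain condition. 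This is precisely the gain over the example $K=(C\times[0,1])\cup([0,1]^2\times\{0\})$, which is connected but fails to be locally connected because widely separated rails are joined only through the base.

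For non-rectifiability I would argue at the level of measures. Since $G$ is $m$-Ahlfors regular, $\Haus^m\res G$ is a Radon measure with $\Haus^m\res G\ll\Haus^m$ and $\uD{m}(\Haus^m\res G,x)<\infty$ a.e., so Theorem \ref{t:assort} applies. Using the self-similarity I would show that at $\Haus^m$-a.e.\ $x\in G$ the blow-ups of $G$ reproduce the generating pattern up to a similarity, so every tangent measure is a non-flat scaled copy of $\Haus^m\res G$ and is never of the form $c\Haus^m\res L$ with $L\in G(n,m)$; equivalently, $G$ has no approximate tangent $m$-plane at a.e.\ point. Any $m$-rectifiable part of positive measure would, by the equivalence $(1)\Leftrightarrow(2)$ of Theorem \ref{t:assort}, have to possess approximate tangent $m$-planes at a.e.\ of its points, and those points are density points inherited by $\Haus^m\res G$; hence the rectifiable part must vanish, i.e.\ $\Haus^m\res G$ is purely $m$-unrectifiable.

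The conclusion is then formal. If $G$ were contained in $f([0,1]^m)$ for some Lipschitz $f$, then $f([0,1]^m)$ is an $m$-rectifiable set and $\Haus^m\res G$ is carried by it, so $\Haus^m(G\cap f([0,1]^m))=\Haus^m(G)>0$, contradicting pure $m$-unrectifiability; hence no such $f$ exists. The main obstacle is the simultaneous balancing, inside one generating pattern, of three competing demands: the rungs must be plentiful enough at every scale to force local connectedness, self-similar and sparse enough to preserve $m$-Ahlfors regularity, and transversal enough (never organized into a flat sheet) to keep all blow-ups non-flat. Verifying that the tangent measures really are never flat — so that the very rungs inserted to achieve connectivity do not inadvertently restore an approximate tangent plane — is the technical heart of the argument.
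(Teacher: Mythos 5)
Your stated reason for abandoning the paper's construction --- the ``measure obstruction'' --- is not real, and this matters because the paper's proof is exactly the construction you reject. Local connectedness does not require that two points straddling a gap of width $g$ be joined by a connected set of diameter $\sim g$; it only requires that for each $\epsilon>0$ there exist \emph{some} $\delta(\epsilon)>0$, possibly much smaller than $\epsilon$, such that points at distance $<\delta$ lie in a connected subset of diameter $<\epsilon$. This frees one to place the generation-$k$ rungs --- squares of diameter $\sim 4^{-k}$ covering the $4^{k}$ clumps of $C$ at scale $4^{-k}$ --- at heights forming only a $2^{-k}$-net of $[0,1]$ rather than a $4^{-k}$-net. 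The added $\Haus^2$ mass at generation $k$ is then $\sim 4^{k}\cdot 2^{k}\cdot 16^{-k}=2^{-k}$, which is summable, and taking the generation-$k$ heights to be dyadic rationals of level $k$ (so that an interval of length $2r$ meets at most one rung height of level $\leq \log_2(1/(2r))$) preserves the local upper bound $\Haus^2(G\cap B(x,r))\lesssim r^2$. This is the ``careful selection of parameters'' the paper alludes to, and it explains why the actual proof simply adjoins rungs to $K$: the non-coverability comes for free because $G\supset C\times[0,1]$, containment in a Lipschitz image passes to subsets, and $\Haus^2\res(C\times[0,1])$ is already known to be purely $2$-unrectifiable. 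Note that the paper's $G$ is \emph{not} purely $2$-unrectifiable (it contains flat squares); the proposition does not require it to be, which is a much weaker demand than the one you impose on yourself.

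The genuine gap is in your replacement construction. You never exhibit an IFS, and the simultaneous requirements --- open set condition at similarity dimension exactly $m$, a connected attractor, and non-flat blow-ups almost everywhere --- are precisely the hard part, which you defer as ``the technical heart.'' Two specific warnings. First, Theorem \ref{t:mm} (Mart\'in--Mattila) assumes the pieces $f_i(S)$ are pairwise disjoint, which forces total disconnectedness, so it cannot be invoked for a connected attractor; you would need Mattila's dichotomy for self-similar sets of integral dimension (purely unrectifiable or contained in an $m$-plane) or a genuine tangent-measure computation. Second, the assertion that \emph{every} tangent measure at a.e.\ point is a scaled isometric copy of $\Haus^m\res G$ is false in general for self-similar measures, whose tangent measures can record partial configurations of neighboring cylinders; fortunately you only need that \emph{some} tangent measure at a.e.\ point is non-flat, since Theorem \ref{t:assort}$(1)\Rightarrow(3)$ combined with the density-point argument forces all tangent measures to be flat a.e.\ on any rectifiable piece of positive measure. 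As written, the proposal replaces a short, essentially complete argument with a harder, open-ended construction whose key verifications are missing.
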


\begin{proof}[Proof Sketch ($m=2$)] Start with the set $K$ above. One can transform $K$ into a locally connected set $G$ by adjoining sufficiently many squares $S_{j,k}\times\{t_k\}\subset\RR^2\times\RR$, $1\leq j\leq J_k$ on a countable dense set of heights $t_k\in [0,1]$, with diameters of $S_{j,k}$ vanishing as $k\rightarrow\infty$. By carefully selecting parameters, one can ensure that $G$ is Ahlfors $2$-regular. However, since $G$ contains $C\times[0,1]$ and $\Haus^2\res(C\times[0,1])$ is purely 2-unrectifiable, it is not possible that $G$ is contained in the image of a Lipschitz map $f:[0,1]^2\rightarrow\RR^3$.\end{proof}

In order to attack Problem \ref{p:hd}, it would be useful to first have new sufficient criteria for identifying Lipschitz images. In the author's view, any solution of the following problem would be interesting (even if very far from a necessary condition).

\begin{problem} \label{prob:squares} For each $m\geq 2$, find sufficient geometric, metric, and/or topological conditions that ensure a set $\Gamma\subset\RR^n$ is (contained in) a Lipschitz image of $[0,1]^m$.\end{problem}

Lipschitz images of $[0,1]^m$ represent only one possible choice of model sets to build a theory of higher dimensional rectifiability, and it may be worthwhile to explore other families of sets for which it is possible to solve Problem \ref{prob:ident}. One promising alternative is a class of surfaces that support a traveling salesman type theorem, recently identified by Azzam and Schul \cite{AS}, which are lower regular with respect to the \emph{$m$-dimensional Hausdorff content} $\Haus^m_\infty(E):=\inf\{\sum (\diam E_i)^m:E\subset\bigcup_i E_i\}$. For a complete description, we refer the reader to \cite{AS}. Also see Villa \cite{Villa}, which characterizes the existence of approximate tangent $m$-planes of content lower regular sets.

%For  $E,Q\subset\RR^n$ and an $m$-dimensional plane $L$, define the quantity $\beta_p^{AS,m}(E,Q,L)$ by $$\beta_p^{AS,m}(E,Q,L)^p(\tfrac{1}{2}\diam Q)^m = \int_0^1 \Haus^m_\infty(\{x\in E\cap Q: \dist(x,L)>t(\tfrac12\diam Q)\})t^{p-1}\,dt,$$ where $\Haus^m_\infty(F):=\inf\{\sum (\diam F_i)^m:F\subset\bigcup_i F_i\}$ denotes the \emph{$m$-dimensional Hausdorff content} of $F\subset\RR^n$. The \emph{$m$-dimensional Azzam-Schul $L^p$ beta number of $E$ in $Q$} is the quantity given by $$\beta_p^{AS,m}(E,Q):=\inf_L \beta_p^{AS,m}(E,Q,L),$$ where the infimum ranges over all $m$-dimensional affine subspaces of $\RR^n$. In \cite{AS}, Azzam and Schul characterize subsets of a class of a family of surfaces that have finite $\Haus^m$ measure and are lower regular with respect to $\Haus^m_\infty$.

%\begin{theorem}[see {\cite[Corollary III]{AS}}]

\section{Fractional rectifiability and other frontiers}

\subsection{Fractional rectifiability}

Let $1\leq m\leq n-1$ be integers and let $s$ be a real number with $s\geq m$. Recall that a map $f:[0,1]^m\rightarrow\RR^n$ is $(m/s)$-H\"older continuous if there is a constant $H<\infty$ such that $$|f(x)-f(y)|\leq H|x-y|^{m/s}\quad\text{for all }x,y\in[0,1]^m.$$ Alternatively, one may view the map $f$ as a Lipschitz map from the snowflaked metric space $([0,1]^m,|\cdot|^{m/s})$ to $\RR^n$. Below we call the image $\Gamma=f([0,1]^m)$ of such a map a \emph{$(m/s)$-H\"older $m$-cube}, or when $m=1$, we may call $\Gamma$ a \emph{$(1/s)$-H\"older curve}.
From the definition of Hausdorff measures, it easily follows that $$\Haus^s(\Gamma)\lesssim_{m,s,H} \Haus^m ([0,1]^m)<\infty.$$ Thus, every $(m/s)$-H\"older $m$-cube $\Gamma\subset\RR^n$ is a compact, connected, locally connected set with $\Haus^s(\Gamma)<\infty$. Mart\'in and Mattila \cite{MM1993} suggested using images of $(m/s)$-H\"older maps from $\RR^m$ as a basis for studying ``fractional rectifiability" of $\Haus^s$ measures restricted to \emph{$s$-sets} $E\subset\RR^n$, i.e.~ $\Haus^s$ measurable sets such that $0<\Haus^s(E)<\infty$. In \cite{BV}, V. Vellis and I revisited Mart\'in and Mattila's idea in the context of arbitrary Radon measures:

By Proposition \ref{p:decomp}, any Radon measure $\mu$ on $\RR^n$ can be decomposed as $$\mu=\mu_{m\rightarrow s}+\mu_{m\rightarrow s}^\perp,$$ where $\mu_{m\rightarrow s}$ is carried by $(m/s)$-H\"older $m$-cubes and $\mu_{m\rightarrow s}^\perp$ is singular to $(m/s)$-H\"older $m$-cubes. When $s=m$, the measures $$\mu_{m\rightarrow m}=\mu^{m}_\rect\quad\text{and}\quad\mu_{m\rightarrow m}^\perp=\mu^m_{\pu}$$ are the $m$-rectifiable and purely $m$-unrectifiable parts of $\mu$, respectively.
The existence of $(m/n)$-H\"older surjections $f:[0,1]^m\rightarrow[0,1]^n$, commonly called \emph{space-filling maps} (see \cite[Chapter VII, \S3]{ss-reals} for the case $m=1$ and \cite[\S9.1]{semmes-buffalo}, \cite{stong} for $m\geq 2$),  guarantees that $\mu_{m\rightarrow s}=\mu$ and $\mu_{m\rightarrow s}^\perp=0$ for all $s\geq n$. Also, Theorem \ref{t:big} above identifies $\mu_{1\rightarrow 1}$ and $\mu_{1\rightarrow 1}^\perp$ for all Radon measures. All other cases of the following problem are open.

\begin{problem}[identification problem for fractional rectifiability] \label{p:fd} Let $1\leq m\leq n-1$ be integers and let $s\in[m,n)$. Find geometric or measure-theoretic properties that identify $\mu_{m\rightarrow s}$ and $\mu_{m\rightarrow s}^\perp$ for every Radon measure $\mu$ on $\RR^n$. (Do not assume that $\mu\ll\Haus^s$.) \end{problem}

Hausdorff measures on self-similar sets provide essential examples of rectifiable and purely unrectifiable behavior in fractional dimensions. For further results in this direction, see Mart\'in and Mattila \cite{MM2000} and Rao and Zhang \cite{Rao-Zhang}.

\begin{theorem}[see \cite{MM1993}] \label{t:mm} Let $1\leq m\leq n-1$ and let $S=\bigcup_{i=1}^k f_i(S)\subset\RR^n$ be a self-similar set of Hausdorff dimension $s\in[m,n)$ in the sense of Hutchinson \cite{Hutch}. Assume that $$f_i(S)\cap f_j(S)=\emptyset\quad\text{for all }i\neq j,$$ Then $\Haus^s\res S$ is singular to $(m/s)$-H\"older $m$-cubes.
\end{theorem}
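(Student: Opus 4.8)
The plan is to argue by contradiction, exploiting the tension between the total disconnectedness and uniform gaps forced by the strong separation condition and the connectedness of a H\"older $m$-cube, transported to the parameter domain $[0,1]^m$. First I would record the structural consequences of the hypothesis $f_i(S)\cap f_j(S)=\emptyset$. Writing $\lambda_i\in(0,1)$ for the contraction ratio of $f_i$, Hutchinson's theory gives that $s$ solves $\sum_{i=1}^k\lambda_i^s=1$, that $\mu:=\Haus^s\res S$ is Ahlfors $s$-regular, and---crucially---that there is a uniform gap $g_0:=\min_{i\neq j}\dist(f_i(S),f_j(S))>0$. For a word $w=w_1\cdots w_p$ I write $f_w=f_{w_1}\circ\cdots\circ f_{w_p}$, $\lambda_w=\lambda_{w_1}\cdots\lambda_{w_p}$, and $S_w=f_w(S)$; the children $S_{wi}$ of the cylinder $S_w$ are then separated by at least $\lambda_w g_0$. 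Suppose, for contradiction, that some $(m/s)$-H\"older $m$-cube $\Gamma=f([0,1]^m)$ satisfies $\Haus^s(S\cap\Gamma)>0$, and set $E:=S\cap\Gamma$.

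The key mechanism I would isolate is a separation estimate in the parameter domain. If $H$ is the H\"older constant of $f$ and $u,u'\in[0,1]^m$ satisfy $f(u)\in S_{wi}$ and $f(u')\in S_{wj}$ with $i\neq j$, then $\lambda_w g_0\leq|f(u)-f(u')|\leq H|u-u'|^{m/s}$, so $|u-u'|\geq(\lambda_w g_0/H)^{s/m}$. Consequently the preimages $K_w:=f^{-1}(S_w)\subseteq[0,1]^m$ form a hierarchically separated, Cantor-type family: the children $K_{wi}$ of $K_w$ are pairwise separated by at least $(\lambda_w g_0/H)^{s/m}$. Moreover, on each $K_w$ the rescaled map $f_w^{-1}\circ f$ is again $(m/s)$-H\"older (with constant $\lambda_w^{-1}H$), which lets the whole argument be renormalized scale by scale; and on cylinders where $E$ has density close to $1$, the H\"older inequality forces a matching lower bound $\diam(K_w\cap f^{-1}E)\gtrsim\lambda_w^{s/m}$ on the diameters of the occupied preimages.

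Next I would reduce to a single good scale using densities. Setting $a_w:=\mu(E\cap S_w)/\mu(S_w)$, self-similarity and finite additivity give the martingale identity $a_w=\sum_i\lambda_i^s\,a_{wi}$ with $\sum_i\lambda_i^s=1$, so by martingale convergence (equivalently, the Lebesgue density theorem for the regular measure $\mu$) one has $a_{w}\to 1$ along the cylinders shrinking to $\mu$-a.e.\ density point of $E$. Fixing such a point and a deep level $p$, all but an $\varepsilon$-fraction of the cylinders $S_w$ with $|w|=p$ meet $E$, hence a comparable number of preimages $K_w$ are nonempty. Selecting one parameter point from each occupied $K_w$ produces a large family of points in $[0,1]^m$ that are pairwise separated at the scale dictated by the estimate above, and a volume/packing count in $[0,1]^m$ is then meant to bound their number against the exponentially many cylinders they represent, producing the contradiction.

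The hard part is that at the critical exponent this count is exactly balanced and a crude packing bound does not close. Because $\sum_i\lambda_i^s=1$, the sibling separation $(\lambda_w g_0/H)^{s/m}$, the forced piece diameters, and the packing capacity of $[0,1]^m$ all scale as the same power of $\lambda_w$ as the number of cylinders, so one cannot win by comparing a single scale; worse, the least-separated pairs (those splitting only at the last generation through the smallest ratio) make a one-point-per-piece estimate inconclusive, and since the occupied preimages merely partition the fixed-measure set $f^{-1}(E)$ there is no measure to be lost either. I therefore expect the genuine obstacle to be an honest multiscale estimate that accounts simultaneously for the moats removed at every generation---a Cantor-type volume computation in $[0,1]^m$---or, equivalently, a tangent-measure/blow-up argument at a density point showing that a connected H\"older image cannot reproduce the full hierarchical gap structure of $S$ at $\mu$-a.e.\ location and scale. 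Carrying out this borderline analysis, where the self-similar structure must be used beyond leading order, is the step I would budget the most effort for; the separation estimate and the density reduction above are the comparatively routine scaffolding around it.
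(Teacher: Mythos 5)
The survey states this theorem without proof, citing Mart\'in--Mattila, so your outline has to stand entirely on its own merits, and as written it does not: the step you explicitly defer is not a technical loose end but the whole theorem. Your scaffolding is correct and worth keeping --- the separation estimate $|u-u'|\geq(\lambda_w g_0/H)^{s/m}$ for parameters mapping into distinct children of $S_w$, the lower bound $\diam(K_w\cap f^{-1}(E))\gtrsim\lambda_w^{s/m}$ on heavily occupied cylinders (via $\Haus^s(f(B))\leq H^s\Haus^m(B)\leq H^s(\diam B)^m$ in the parameter space), and the density-point reduction are all fine. And your diagnosis of the obstruction is accurate: at generation $p$ a covering of $[0,1]^m$ at the separation scale admits at most $N_p\lesssim (H/g_0)^s k^p$ image locations against $k^p$ cylinders of measure $k^{-p}\Haus^s(S)$ each, so every single-scale comparison returns only $\Haus^s(E)\leq C(H,g_0)\,\Haus^s(S)$, with no decay in $p$. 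Correctly identifying where the difficulty lies is not the same as resolving it.

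Worse, the specific repair you propose --- ``an honest multiscale estimate that accounts for the moats removed at every generation, a Cantor-type volume computation in $[0,1]^m$'' --- demonstrably cannot close the gap for $m\geq 2$ in the form you describe. Disjointness plus positive separation forces no loss of $\Haus^m$-measure across generations (fat Cantor sets carry hierarchical gaps at every scale and still have positive measure), and in $\RR^m$ with $m\geq 2$ a family of pairwise $c\rho$-separated subsets of a ball of radius $\rho$ can each still have diameter comparable to $\rho$ (think of parallel slabs), so neither a volume count nor a diameter count registers the moats. Only for $m=1$ does the moat accounting succeed, because on a line the diameters of the occupied pieces and the gaps between them compete additively inside $[0,1]$ while $\sum_{|v|=j}\lambda_v^{s}=1$ makes the cumulative gap length diverge in $j$. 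For $m\geq 2$ a genuinely different mechanism is needed; the one underlying the known proofs is a density-extremum argument in the parameter domain: transport $\mu\res E$ to a measure $\nu$ on $[0,1]^m$ via a Borel right inverse of $f$, observe that the H\"older bound forces $\nu(B(u,\rho))\leq H^s\Haus^m(B(u,\rho))$ for every ball, pass to a point and scale where the density ratio $\nu(B(u,\rho))/(2\rho)^m$ is within $\epsilon$ of its finite positive supremum, note that near-extremality pins down the minimal cylinder $S_w$ capturing $f(B(u,\rho))\cap S$ to satisfy $\lambda_w^s\approx(2\rho)^m$, and then use the splitting of the mass of $B(u,\rho)$ among the $k$ children of $S_w$, whose preimages are separated by a definite fraction of $\rho$, to contradict extremality. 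Developing that argument (or the equivalent blow-up version) is the content of the theorem, and it is absent from your proposal.
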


\begin{theorem}[see Remes \cite{Remes}] Let $S=\bigcup_1^kf_i(S)\subset\RR^n$ be a self-similar set of Hausdorff dimension $s\in[1,n)$ that satisfies the open set condition. If $S$ is compact and connected, then $S$ is a $(1/s)$-H\"older curve.\end{theorem}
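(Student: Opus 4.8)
The goal is to produce a surjective $(1/s)$-H\"older map $f:[0,1]\to S$; equivalently, a Lipschitz surjection from the snowflaked interval $([0,1],|\cdot|^{1/s})$, whose Hausdorff dimension is exactly $s$, onto $(S,|\cdot|)$. The plan is to build $f$ by hand from the self-similar combinatorics, matching ``parameter time'' to $\Haus^s$ mass. The two facts I would extract first are: (i) from the open set condition, Hutchinson's theorem \cite{Hutch} gives the Moran relation $\sum_{i=1}^k r_i^s=1$ (where $r_i$ is the contraction ratio of $f_i$), finiteness $0<\Haus^s(S)<\infty$, and Ahlfors $s$-regularity $\Haus^s(S\cap B(x,r))\sim r^s$ for $x\in S$ and $0<r\le\diam S$; and (ii) from connectedness of $S$, a standard criterion of Hata, namely that the \emph{touching graph} on the symbols $\{1,\dots,k\}$---with an edge between $i$ and $j$ whenever $f_i(S)\cap f_j(S)\ne\emptyset$---is connected. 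By self-similarity, the touching graph on the children of any cylinder $S_\omega:=f_\omega(S)$ is the image of the first-level graph under the similarity $f_\omega$, hence connected as well.

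Next I would set up the coding and the parameter intervals. Writing $r_\omega:=r_{\omega_1}\cdots r_{\omega_n}$ for a word $\omega$ of length $n$, the Moran relation gives $\sum_{|\omega|=n}r_\omega^s=1$, and more generally $\sum_{\omega\in\mathcal A}r_\omega^s=1$ for any \emph{cut} (finite antichain meeting every infinite branch) $\mathcal A$ in the word tree. The plan is to assign to each cylinder $S_\omega$ a closed parameter interval $I_\omega\subseteq[0,1]$ with $|I_\omega|=r_\omega^s$, nested so that $I_{\omega i}\subseteq I_\omega$ and $\bigcup_i I_{\omega i}=I_\omega$ up to endpoints. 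The crucial requirement is the \emph{order}: within each $I_\omega$ the children intervals must be listed so that consecutive pieces $S_{\omega i}$ and $S_{\omega(i+1)}$ touch, and so that the first and last children meet prescribed ``entry'' and ``exit'' points of $S_\omega$ that serve as junctions with the neighboring cylinders one level up. Using connectivity of the touching graph I would fix a tour of the children realizing these adjacencies, chosen recursively and consistently so that the level-$(n+1)$ order refines the level-$n$ order. One then defines $f$ on the endpoints of the $I_\omega$ (routing through the shared junction points in $S_{\omega i}\cap S_{\omega(i+1)}$) and extends by a limit: since $\diam S_\omega=r_\omega\diam S\to0$ along every branch and the intervals $I_{\omega|n}$ shrink to a point, $f$ is well defined and continuous, and every $p\in S$ is hit via its coding, giving surjectivity.

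The H\"older estimate is where the exponent $1/s$ and the mass-matching $|I_\omega|=r_\omega^s$ conspire, and to make it uniform across widely varying ratios $r_i$ I would work with \emph{Moran covers} rather than words of a fixed length. Given $x,y\in[0,1]$, put $\delta:=|x-y|^{1/s}$ and use the cut $\mathcal A_\delta:=\{\omega:r_\omega\le\delta<r_{\omega^-}\}$, where $\omega^-$ is the parent word, so that every $\omega\in\mathcal A_\delta$ has $r_\omega\sim\delta$ and hence $|I_\omega|=r_\omega^s\sim\delta^s=|x-y|$. Since all cut intervals have comparable length, $x$ and $y$ lie in the same or in boundedly many consecutive intervals $I_\omega$ with $\omega\in\mathcal A_\delta$; because consecutive pieces touch and each has diameter $\sim\delta$, the triangle inequality gives $|f(x)-f(y)|\lesssim\delta=|x-y|^{1/s}$. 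The numerology $(|I_\omega|)^{1/s}=(r_\omega^s)^{1/s}=r_\omega\sim\diam S_\omega$ is exactly the balance that forces the correct H\"older exponent.

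The main obstacle is the recursive ordering in the middle step: turning Hata's \emph{qualitative} connectivity into a \emph{quantitative}, self-consistent tour. Concretely, each cylinder must be parameterized as an arc from a prescribed entry port to a prescribed exit port, assembled from the children's arcs; keeping these ports compatible across all infinitely many levels, while ensuring that the number of pieces one chains through between consecutive junctions (and the number of times any region is revisited) stays uniformly bounded, is the delicate part. A spanning tree of the touching graph yields a depth-first walk that visits every child but may traverse some junctions twice; one must organize this so that the total parameter length remains $1$ and the multiplicity is bounded independently of the level, for otherwise the H\"older constant degenerates. The unequal contraction ratios compound this bookkeeping, which is precisely why the Moran-cover formulation is needed to keep all active pieces at comparable scale. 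Once the bounded-multiplicity tour is in place, the estimates above are routine.
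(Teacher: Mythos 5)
A preliminary remark: the paper does not prove this statement---it is quoted from Remes \cite{Remes}---so the only available comparison is with the strategy of that work, which is indeed the one you outline: code $S$ by words, assign parameter intervals of length $r_\omega^s$ using the Moran relation, order the cylinders via connectivity of the touching graph, and obtain the H\"older estimate from Moran covers. Your reductions are sound as far as they go: Hata's criterion, the inheritance of connectivity of the touching graph by every cylinder via $S_{\omega i}\cap S_{\omega j}=f_\omega(S_i\cap S_j)$, the identity $\sum_{\omega\in\mathcal{A}}r_\omega^s=1$ over cuts, and the Moran-cover computation showing that $|I_\omega|=r_\omega^s$ is exactly the normalization that produces the exponent $1/s$.

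The gap is the one you name yourself and then set aside: the existence of a self-consistent tour with multiplicity bounded uniformly in the level. This is not routine bookkeeping; it is the theorem. The touching graph need not admit a Hamiltonian path (consider a ``star'': one central piece meeting $k-1$ pairwise disjoint outer pieces), so some child must be revisited, and the obvious device---replace every occurrence of a word $\omega$ at level $n$ by the same depth-first walk through its children---makes multiplicities compound across levels: if symbol $i$ appears $m_i\geq 2$ times in the level-one walk, then $\omega$ appears $\prod_j m_{\omega_j}$ times at level $|\omega|$, the total parameter mass $\sum_{|\omega|=n}m_\omega r_\omega^s=\bigl(\sum_i m_i r_i^s\bigr)^n$ diverges, and the H\"older constant degenerates with it. Even the natural repair (designate one ``principal'' occurrence of each word that tours all children, and let the other occurrences run only a simple path from entry child to exit child) gives $m_{\omega j}\leq m_\omega+O(k)$, i.e.\ multiplicity growing linearly in the level, which still ruins both the normalization of the total parameter length and the lower bound $|I|\gtrsim r_\omega^s$ on each individual interval assigned to $\omega$ that your Moran-cover argument silently uses (two parameters at distance $\delta^s$ could otherwise straddle unboundedly many short intervals). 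Closing this requires an additional idea---roughly, managing the entry and exit ports recursively so that secondary occurrences propagate with multiplicity one, for instance by forcing their entry and exit to land in a single child after refinement---and that port management is the actual content of Remes's construction (and of its descendants in \cite{BV}). As written, your proposal correctly frames the problem and states the right estimates, but it does not yet constitute a proof.
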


For Radon measures, extreme behavior of the lower and upper Hausdorff densities force a measure to carried by or singular to $(1/s)$-H\"older curves and  $(m/s)$-H\"older $m$-cubes.

\begin{theorem}[see Badger-Vellis \cite{BV}; behavior under extreme Hausdorff densities] \label{t:bv} Let $1\leq m\leq n-1$ be integers, let $s\in[m,n)$, and let $t\in[0,s)$. If $\mu$ is a Radon measure on $\RR^n$, then \begin{enumerate}
\item $\mu\res\left\{x:\lD{s}(\mu,x)=0\right\}$ is singular to  $(m/s)$-H\"older $m$-cubes;
\item $\nu$ is carried by $(1/s)$-H\"older curves, where $$\nu\equiv\mu\res\left\{x:\int_0^1 \frac{r^s}{\mu(B(x,r))}\frac{dr}{r}<\infty\text{ and }\limsup_{r\downarrow 0}\frac{\mu(B(x,2r))}{\mu(B(x,r))}<\infty\right\};$$
\item $\rho\equiv \mu\res\{x:0<\lD{t}(\mu,x)\leq\uD{t}(\mu,x)<\infty\}$ is carried by $(m/s)$-H\"older $m$-cubes;
\item moreover, if $0\leq t<1$, then $\rho$ is carried by bi-Lipschitz embeddings $f:[0,1]\rightarrow\RR^n$.
\end{enumerate}
\end{theorem}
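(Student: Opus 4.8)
The plan is to separate the singularity statement (1), which follows from packing-measure considerations as in Lemma \ref{l:marp}, from the three carrying statements (2)--(4), which I would obtain from a single multiresolution threading construction whose cost is governed by a scale-summed budget.

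For (1), I would work through the snowflake viewpoint. A $(m/s)$-H\"older $m$-cube $\Gamma=f([0,1]^m)$ is, by definition, a Lipschitz image of the snowflaked cube $([0,1]^m,|\cdot|^{m/s})$; snowflaking by the exponent $m/s$ multiplies dimension by $s/m$, so this metric space carries finite $s$-dimensional packing measure (equal to $\mathcal{P}^m([0,1]^m)<\infty$), and since packing measure does not increase beyond a Lipschitz factor under Lipschitz maps, $\mathcal{P}^s(\Gamma)<\infty$. On the other hand, the Taylor--Tricot comparison \cite{TT} gives, for every $\lambda>0$ and every Borel set $B$ on which $\lD{s}(\mu,\cdot)\le\lambda$, the bound $\mu(B)\lesssim_s\lambda\,\mathcal{P}^s(B)$. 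Applying this to $B=\{x:\lD{s}(\mu,x)=0\}\cap\Gamma$ with $\lambda\downarrow 0$ and using $\mathcal{P}^s(\Gamma)<\infty$ yields $\mu(B)=0$; hence $\mu\res\{\lD{s}(\mu,\cdot)=0\}$ is singular to $(m/s)$-H\"older $m$-cubes. This is exactly the $s=m$, Lipschitz, argument of Lemma \ref{l:marp} transplanted to the snowflake.

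For (2)--(4) the engine is a H\"older traveling-salesman construction: given a bounded set $A$ with a choice, at each dyadic scale $2^{-k}$, of $N_k$ pieces of diameter $\sim 2^{-k}$ covering $A$, one threads the pieces by a single map whose modulus of continuity is controlled by the budget $\sum_k N_k\,(2^{-k})^{s}$ for a $(m/s)$-H\"older $m$-cube, and by $\sum_k N_k\,2^{-k}$ for a Lipschitz curve (the case $m=1$). The snowflake bookkeeping is that a spatial piece of diameter $\delta$ must be reached by a sub-cube of the domain of side $\sim\delta^{s/m}$, hence of domain $\Haus^m$-measure $\sim\delta^{s}$ (respectively, by a domain interval of length $\sim\delta$ in the Lipschitz case); summability of the budget is precisely what makes the map well defined and H\"older (respectively Lipschitz). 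I would feed each hypothesis into this engine. For (3), I first stratify $A=\{0<\lD{t}\le\uD{t}<\infty\}$ into countably many bounded subsets on which $\mu(B(x,r))\ge c\,r^{t}$ holds for all $r\le r_0$; this lower regularity is a packing estimate that caps the number of disjoint scale-$2^{-k}$ pieces by $N_k\lesssim 2^{kt}$, so the budget is $\sum_k 2^{kt}(2^{-k})^{s}=\sum_k 2^{k(t-s)}<\infty$ exactly because $t<s$, and the engine returns a $(m/s)$-H\"older $m$-cube. Finiteness of $\uD{t}$ (equivalently $\rho\ll\Haus^t$) transfers ``covered up to an $\Haus^t$-null set'' into ``$\rho$-null'', giving that $\rho$ is carried by countably many such cubes. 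For (4), the same pieces are threaded by a curve ($m=1$) with Euclidean length in place of snowflaked length, so the budget becomes $\sum_k 2^{kt}\cdot 2^{-k}=\sum_k 2^{k(t-1)}<\infty$ when $t<1$; the threading curve then has finite length and is Lipschitz. For (2), the density integral plays the role of the budget directly: integrating the pointwise condition against $\mu$ gives $\int\!\big(\int_0^1 r^{s}/\mu(B(x,r))\,dr/r\big)\,d\mu(x)=\int_0^1\!\big(\int d\mu(x)/\mu(B(x,r))\big)\,r^{s}\,dr/r$, and $\int d\mu/\mu(B(\cdot,r))\sim N(r)$ is the scale-$r$ covering number, so finiteness of the integral is finiteness of the budget $\sum_k N(2^{-k})2^{-ks}$; the pointwise doubling hypothesis supplies the scale-to-scale regularity needed to locate the pieces (through their $\mu$ centers of mass, via Lemma \ref{l:lerman}) during the construction, and the engine produces a $(1/s)$-H\"older curve carrying $\nu$.

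The hard part is the construction engine itself, in two places. First, extracting the \emph{upper} H\"older/Lipschitz bound requires choosing a good (dyadic, space-filling-type) ordering of the multiresolution pieces so that consecutive pieces are spatially close relative to their separation in the parameter, and controlling the modulus of continuity from the crude budget alone---without any \emph{a priori} Ahlfors regularity of $A$---is where most of the effort goes. Second, and most delicate, is upgrading Lipschitz to \emph{bi-Lipschitz} in (4): injectivity and the non-self-approach of the arc do not follow from the budget, and must be extracted from the quantitative separation of the scale-$2^{-k}$ pieces furnished by the positive lower density, arranged along a tree so that the parameterization admits a lower bound as well as an upper one.
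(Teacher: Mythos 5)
This survey does not actually prove Theorem \ref{t:bv}; the statement is given with a pointer to \cite{BV} and no argument, so there is no in-paper proof to compare against. Judged on its own terms, your architecture is the right one and does match the strategy of \cite{BV}. Part (1) is essentially complete as you wrote it: a $(m/s)$-H\"older $m$-cube is a Lipschitz image of the snowflake $([0,1]^m,|\cdot|^{m/s})$, which has finite $s$-dimensional packing measure, and the Taylor--Tricot comparison $\mu(B)\lesssim_s \lambda\,\mathcal{P}^s(B)$ on $\{\lD{s}(\mu,\cdot)\le\lambda\}$ with $\lambda\downarrow 0$ finishes it; this is the honest analogue of Lemma \ref{l:marp}. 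For (2)--(4), localizing to pieces where the hypothesis is uniform, extracting nested $2^{-k}$-nets $V_k$, bounding $\sum_k \#V_k\, 2^{-ks}$ (by the lower density bound in (3)--(4), by the Tonelli computation with the density integral in (2)), and feeding this into a H\"older parameterization theorem is exactly how \cite{BV} proceeds.

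The gap is that the parameterization engine you black-box is essentially the entire content of \cite{BV}: the ``H\"older Traveling Salesman'' theorem for curves, its genuinely $m$-dimensional analogue, and the bi-Lipschitz upgrade in (4) are all nontrivial theorems, not routine bookkeeping. Two specific cautions. First, for (3) with $m\ge 2$ you cannot reduce to curves: precomposing a $(1/\sigma)$-H\"older curve with a coordinate projection $[0,1]^m\to[0,1]$ yields an $(m/s)$-H\"older $m$-cube only when $\sigma\le s/m$, while the curve budget $\sum_k 2^{kt}2^{-k\sigma}$ converges only for $t<\sigma$; since the hypothesis allows $s/m\le t<s$ (and indeed the set can have Hausdorff dimension $t>s/m$, exceeding what countably many such curves can cover), a genuinely $m$-dimensional threading---assigning each scale-$2^{-k}$ net point a domain sub-cube of side $\sim 2^{-ks/m}$ and controlling the ordering---is unavoidable, as your bookkeeping implicitly concedes. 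Second, the appeal to Lemma \ref{l:lerman} in part (2) is misplaced: no $L^2$ beta numbers or centers of mass enter the H\"older constructions of \cite{BV}; the pointwise doubling hypothesis there is used only to convert $\int d\mu/\mu(B(\cdot,r))$ into a covering number and to make consecutive nets interleave. None of this is a wrong turn, but as written the proposal establishes (1) and reduces (2)--(4) to theorems it does not prove.
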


\begin{example}[$2^n$-corner Cantor Sets] Let $E_t\subset\RR^n$ be a $2^n$-corner Cantor set of Hausdorff dimension $0<t<n$, generated starting from a cube of side length 1 by replacing it with $2^n$ subcubes of side length $2^{-n/t}$, placed at the corners of the original cube (see Figure \ref{fig:8} for a depiction of the case $n=3$). Then $\Haus^t\res E_t$ is Ahlfors $t$-regular, i.e.~ $\Haus^t(E_t\cap B(x,r))\sim r^t$ for all $x\in E_t$ and $0<r\leq 1$. Thus, for any integer $1\leq m\leq n-1$ such that $t\geq m$, \begin{itemize}
\item $\Haus^t\res E_t$ is singular to $(m/t)$-H\"older $m$-cubes by Theorem \ref{t:mm}; and,
\item $\Haus^t\res E_t$ is carried by $(m/s)$-H\"older $m$-cubes for all $s>t$ by Theorem \ref{t:bv}(3).\end{itemize} Moreover, if $0<t<1$, then $\Haus^t\res E_t$ is carried by bi-Lipschitz curves by Theorem \ref{t:bv}(4).
\end{example}

\begin{figure}\begin{center}\includegraphics[width=.5\textwidth]{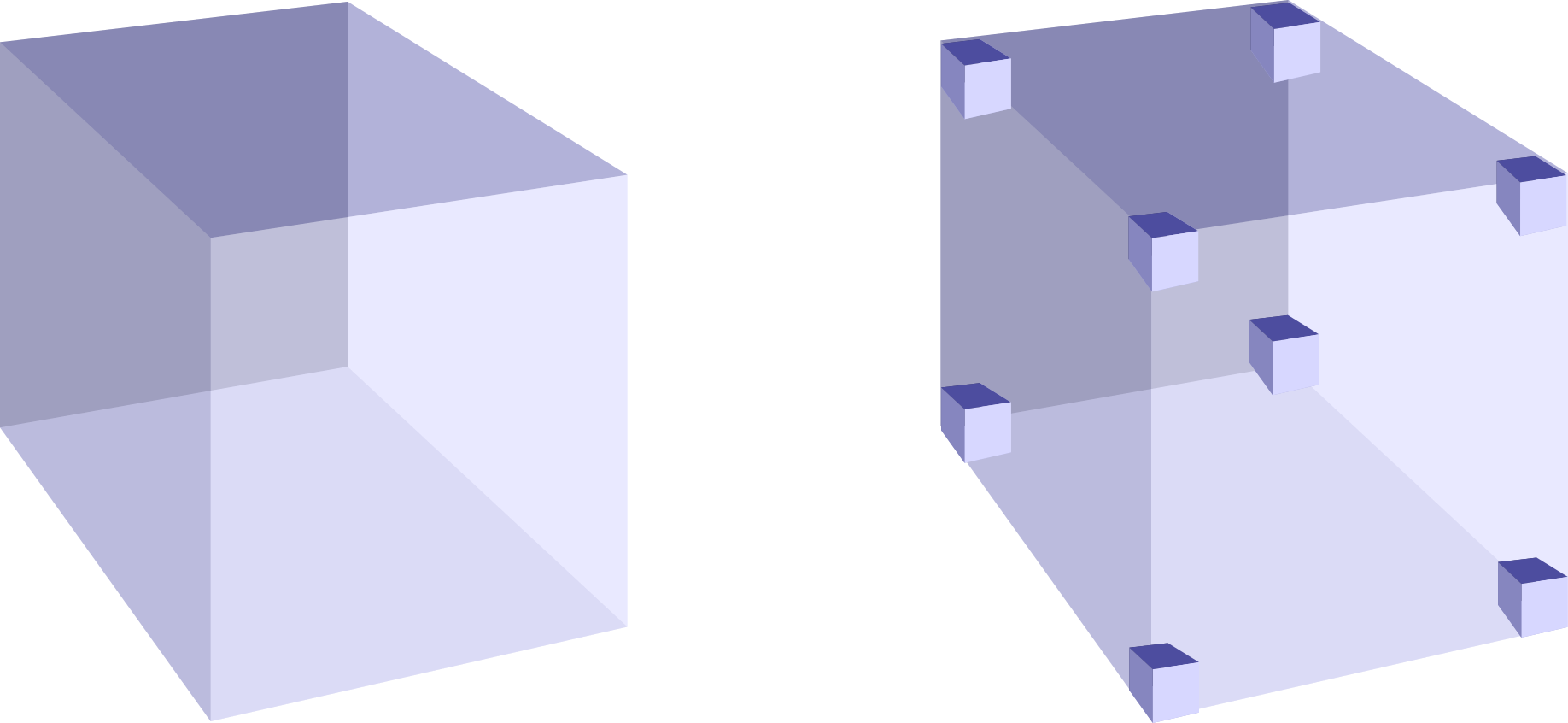}\end{center}
\caption{Generators for an Eight Corner Cantor set} \label{fig:8}
\end{figure}

The following problem is open, but from the point of view of topological dimension may be more tractable than the corresponding problem for Lipschitz squares (see Problem \ref{prob:squares}).

\begin{problem} For all $1<s<2$, find sufficient geometric, metric, and/or topological conditions that ensure a set $\Gamma\subset\RR^n$ is (contained in) a $(1/s)$-H\"older curve.\end{problem}

\subsection{Higher-order rectifiability}

Let $1\leq m\leq n-1$ and $k\geq 1$ be integers, and let $\alpha\in[0,1]$. We say that a Radon measure $\mu$ on $\RR^n$ is \emph{$C^{k,\alpha}$ $m$-rectifiable} if $\mu$ is carried by $m$-dimensional $C^{k,\alpha}$ embedded submanifolds of $\RR^n$. In the case $\alpha=0$, we also say that $\mu$ is $C^k$ $m$-rectifiable. The study of higher-order rectifiability of measures was initiated by Anzellotti and Serapioni \cite{AS94}. In general, different orders of rectifiability give rise to different classes of measures.

\begin{theorem} Let $1\leq m\leq n-1$ and $k,l\geq 1$ be integers, and let $\alpha,\beta\in[0,1]$. \begin{enumerate}
\item If $k+\alpha < l+\beta$, then there exists an $m$-set $E\subset\RR^n$ such that $\Haus^m\res E$ is $C^{k,\alpha}$ $m$-rectifiable and purely $C^{l,\beta}$ $m$-unrectifiable (i.e.~singular to $C^{l,\beta}$ submanifolds).
\item If $\mu$ is a Radon measure on $\RR^n$ and $\mu\ll\Haus^m$, then $\mu$ is $C^{k,1}$ rectifiable if and only if $\mu$ is $C^{k+1}$ rectifiable.\end{enumerate}
\end{theorem}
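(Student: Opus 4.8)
The plan is to treat the two parts by quite different means: Part (1) is a single, carefully engineered example, while Part (2) is a higher-order analogue of the graph $\Leftrightarrow C^1$ equivalence recorded in Theorem \ref{t:equiv} (indeed Part (2) with $k=0$ \emph{is} the $C^{0,1} \Leftrightarrow C^1$ case of that theorem).

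\emph{Part (1).} Since $l+\beta > k+\alpha$ means $C^{l,\beta}$ is the strictly smoother class, the $C^{k,\alpha}$-rectifiability will be automatic and only the pure $C^{l,\beta}$-unrectifiability is at stake. I would first reduce to codimension one: build a function $g:[0,1]^m\to\RR$ and set $E=\mathrm{graph}(g)\subset\RR^{m+1}\subseteq\RR^n$. Then $0<\Haus^m(E)<\infty$, and since $E$ is the graph of a $C^{k,\alpha}$ function it is a $C^{k,\alpha}$ embedded $m$-manifold, so $\Haus^m\res E$ is trivially $C^{k,\alpha}$ $m$-rectifiable. For $g$ I would take a lacunary (Weierstrass-type) function
$$g(x) = \sum_{j=0}^\infty \lambda^{-j(k+\alpha)}\,\varphi(\lambda^j x),$$
where $\varphi$ is a fixed smooth, nonconstant, $\ZZ^m$-periodic building block (adjusted at the endpoint cases $\alpha\in\{0,1\}$) and $\lambda$ is a large integer. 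Differentiating $k$ times reduces $g$ to a Weierstrass function of exponent $\alpha$, so $g\in C^{k,\alpha}$ but is nowhere more regular; more precisely, at every $x_0$ and every polynomial $P$ of degree $l$, the oscillation $\sup_{|x-x_0|\le r}|g(x)-P(x-x_0)|$ is bounded below by a constant times $r^{k+\alpha}$ along a fixed positive fraction of each scale.

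\emph{Pure $C^{l,\beta}$-unrectifiability.} I argue by contradiction: suppose a $C^{l,\beta}$ submanifold $M$ meets $E$ in a set $S$ with $\Haus^m(S)>0$. Because $k\ge 1$, $E$ is a $C^1$ graph, so at an $\Haus^m$-density point $x_0$ of $S$ the manifolds $E$ and $M$ share a tangent $m$-plane; projecting to the coordinate $(m+1)$-plane containing $E$ and writing both as graphs over this tangent plane yields functions $g$ and $\tilde h$ with $\tilde h\in C^{l,\beta}$ and $g=\tilde h$ on a set $A$ of density one at $x_0$. Letting $P$ be the degree-$l$ Taylor polynomial of $\tilde h$ at $x_0$, the $C^{l,\beta}$ estimate gives $|g(x)-P(x-x_0)|\lesssim |x-x_0|^{l+\beta}$ for all $x\in A$, which contradicts the lower oscillation bound for the lacunary $g$ since $l+\beta>k+\alpha$. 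The main obstacle in Part (1) is exactly to make this contradiction airtight: I must choose $\varphi$ and $\lambda$ so that the scale-$\lambda^{-j}$ roughness of $g$ \emph{persists along every positive-density subset} $A$, rather than being erasable by restricting to a set of density close to one. This robustness of the $(k+\alpha)$-order roughness against \emph{all} $C^{l,\beta}$ manifolds — not merely graphs in one fixed coordinate system — is the heart of the statement.

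\emph{Part (2).} One direction is immediate and does not use $\mu\ll\Haus^m$: a $C^{k+1}$ submanifold is $C^{k,1}$ (its order-$k$ derivatives are $C^1$, hence locally Lipschitz), so $C^{k+1}$-rectifiability implies $C^{k,1}$-rectifiability. For the converse I would localize: $\mu$ is carried by countably many graphs of $C^{k,1}$ functions $g$ over $m$-planes, and each such graph is a bi-Lipschitz image of its domain, so $\Haus^m$-null subsets of a graph pull back to $\mathscr{L}^m$-null sets; since $\mu\ll\Haus^m$, it suffices to cover $\mathscr{L}^m$-a.e.\ point of each domain by pieces on which $g$ is $C^{k+1}$. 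On the domain $D^kg$ is Lipschitz, so by Rademacher's theorem $D^{k+1}g$ exists $\mathscr{L}^m$-a.e., and integrating the resulting first-order expansion of $D^kg$ gives a degree-$(k+1)$ Peano–Taylor expansion of $g$ with $o(|h|^{k+1})$ remainder a.e. The hard step is the $C^{k+1}$ Lusin/Whitney extension: by Egorov's theorem the remainder is uniform on a compact set $K$ of almost full measure, so the associated family of Taylor polynomials is a $C^{k+1}$ Whitney field on $K$, and the Whitney extension theorem produces $\tilde g\in C^{k+1}$ with $g=\tilde g$ on $K$. Exhausting each domain by such sets covers $\mathscr{L}^m$-a.e.\ point, hence (via $\mu\ll\Haus^m$) $\mu$-a.e.\ point, so $\mu$ is $C^{k+1}$ $m$-rectifiable. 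The principal obstacle here is verifying that a.e.\ order-$(k+1)$ Peano differentiability genuinely yields a Whitney field to which the $C^{k+1}$ extension theorem applies, and confirming that absolute continuity legitimately upgrades the ``up to $\Haus^m$-null'' covering to a covering of $\mu$-almost every point.
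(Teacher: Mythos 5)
The paper itself disposes of this theorem by citation: part (1) is \cite[Proposition 3.3]{AS94} and part (2) is Federer \cite[3.1.5]{Federer} plus the Radon--Nikodym theorem, exactly the absolute-continuity upgrade you describe at the end. Your outline is therefore a reconstruction of the proofs behind those citations rather than a different route, and for part (2) it is essentially the right reconstruction: the easy direction, the reduction via bi-Lipschitz graph charts and $\mu\ll\Haus^m$ to an $\mathscr{L}^m$-a.e.\ statement on the domain, and the Rademacher--Egorov--Whitney scheme are all standard. The one step you wave at --- that uniform order-$(k+1)$ Peano expansions on a compact set $K$ form a Whitney field, i.e.\ satisfy the compatibility conditions $D^\gamma P_y(x)-D^\gamma P_x(x)=o(|x-y|^{k+1-|\gamma|})$ between \emph{pairs} of points of $K$, not just a remainder estimate at each single point --- is true here but deserves a sentence: it follows because $g$ is defined on a full open set with $D^kg$ genuinely Lipschitz, so one can run the argument on $D^kg$ (a Lipschitz map, hence $C^1$-Lusin approximable) and descend; pointwise Peano differentiability on an arbitrary closed set does not by itself yield a Whitney field.

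Part (1) contains a genuine gap, and it sits exactly where you locate it. Your contradiction needs a \emph{quantitative, measure-theoretic} non-degeneracy statement: for every degree-$l$ polynomial $P$ (with constant uniform in $P$, which your normalization must address since the Taylor coefficients of $\tilde h$ at $x_0$ are not a priori bounded), the set $\{x\in B(x_0,r): |g(x)-P(x-x_0)|\geq c\,r^{k+\alpha}\}$ has $\mathscr{L}^m$-measure at least $\theta r^m$ for a fixed $\theta>0$ and all small $r$. Only then does density one of $A$ at $x_0$ force $A$ to meet that set and produce the contradiction; the sup-norm lower bound $\sup_{B(x_0,r)}|g-P|\gtrsim r^{k+\alpha}$ that lacunarity readily gives is consistent with the deviation being concentrated on a null set that $A$ avoids. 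You state that this robustness is ``the main obstacle'' but do not supply the choice of $\varphi$ and $\lambda$ or the argument that delivers it, nor the endpoint modification for $\alpha=0$ (where $\sum\lambda^{-jk}\varphi(\lambda^jx)$ fails even to be $C^k$ and a convergence factor must be inserted without destroying the lower bound), nor the re-graphing step (your $g$ is a graph over a coordinate plane, while the comparison with $M$ takes place over the common tangent plane at $x_0$, so the oscillation lower bound must survive a $C^1$ change of graphing plane). As written, the pure $C^{l,\beta}$-unrectifiability --- the entire content of part (1) --- is asserted rather than proved; this is precisely the work done in \cite[Proposition 3.3]{AS94}.
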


\begin{proof} For (1), see \cite[Proposition 3.3]{AS94}. When $\mu=\Haus^m\res E$ and $0<\Haus^m(E)<\infty$, (2) is a consequence of \cite[3.1.5]{Federer}. The assertion for absolutely continuous measures then follows via the Radon-Nikodym theorem. Compare to Theorem \ref{t:equiv} above. \end{proof}

Assume that $M\subset\RR^n$ with $\Haus^m(M)<\infty$. Let $x\in M$ and assume that $M$ admits a unique approximate tangent plane $T_x M$ at $x$. Following \cite{AS94}, we say that a closed set $Q_x M\subset\RR^n$ is the \emph{approximate tangent paraboloid of $M$ at $x$} if \begin{itemize}
\item $Q_xM=\{(y,A(y,y)):y\in T_x M\}$ for some bilinear symmetric form $$A:T_xM \times T_x M \rightarrow (T_xM)^\perp,$$
\item $\Haus^m\res \xi_{x,\rho}(M)$ converges to $\Haus^m\res Q_x M$ weakly in the sense of Radon measures as $\rho\downarrow 0$, where $\xi_{x,\rho}$ denotes the \emph{nonhomogeneous dilation of $M$ at $x$} defined by $$\xi_{x,\rho}:\RR^n\rightarrow\RR^n,\quad \xi_{x,\rho}(y)= \rho^{-1}\pi_{T_xM}(y-x) + \rho^{-2} \pi_{T_xM^\perp}(y-x).$$
    \end{itemize}

\begin{example}[approximate tangent paraboloid to a graph] Let $f:\RR^m\rightarrow\RR^{n-m}$ be a function of class $C^2$ such that $f(0)=0$, $Df(0)=0$, and $$f(x)=\frac{1}{2}\sum_{p,q=1}^n D_{p,q}^2f(0)x_px_q + o(|x|^2)=:g(x)+o(|x|^2).$$ Then the graph of $f$ is a $C^2$ manifold and $Q_{0}(\mathrm{graph}\,f) = \{(x,g(x)):x\in\RR^m\}$.\end{example}

Anzellotti and Serapioni \cite{AS94} give geometric characterizations of $C^{1,\alpha}$ rectifiability when $\alpha\in[0,1]$. In the case $\alpha=1$, their result is the following.

\begin{theorem}[see {\cite[Theorem 3.5]{AS94}}; geometric characterization of $C^2$ rectifiability] Assume $M\subset\RR^n$ with $\Haus^m(M)<\infty$. Then $\Haus^m\res M$ is $C^2$ $m$-rectifiable if and only if $\Haus^m(M\setminus \bigcup_{j=1}^\infty M_j)=0$ for some $M_j\subset\RR^n$ with $M_i\cap M_j=\emptyset$ for all $i\neq j$ and such that for every $j\geq 1$ and $\Haus^m$-a.e. $x\in M_j$, \begin{enumerate}
\item the approximate tangent plane $T_x M_j$ exists,
\item the approximate tangent paraboloid $Q_x M_j$ exists, and
\item (see \cite[2.9.12]{Federer} for definition of approximate limits) $$\mathrm{ap}\,\limsup_{\stackrel{y\rightarrow x}{y\in M_j}} \dfrac{d(T_yM_j,T_xM_j)}{|y-x|}<\infty,\quad d(T_1,T_2)\equiv\sup_{|y|=1} |\pi_{T_1}(y)-\pi_{T_2}(y)|.$$\end{enumerate}
\end{theorem}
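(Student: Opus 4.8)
The plan is to prove the two implications separately, using the equivalence of $C^{1,1}$ and $C^2$ rectifiability for absolutely continuous measures (Federer \cite[3.1.5]{Federer}, invoked in the theorem above) in order to reduce the analytic content of the hard direction to a Whitney extension carried out at the level of $C^{1,1}$.

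For the forward direction, suppose $\Haus^m\res M$ is $C^2$ $m$-rectifiable, so that $\Haus^m(M\setminus\bigcup_j\Sigma_j)=0$ for some $C^2$ submanifolds $\Sigma_j$. Set $M_j:=(M\cap\Sigma_j)\setminus\bigcup_{i<j}\Sigma_i$; these are pairwise disjoint and cover $M$ up to an $\Haus^m$-null set. The key point is that at $\Haus^m$-a.e.\ $x\in M_j$ the set $M_j$ has $\Haus^m$-density $1$ relative to the rectifiable set $\Sigma_j$, so the approximate tangent objects of $M_j$ at $x$ coincide with the genuine tangent objects of the $C^2$ manifold $\Sigma_j$. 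Existence of the tangent plane (1) then follows from the $C^1$ structure, existence of the tangent paraboloid (2) from the second-order Taylor expansion of $\Sigma_j$ (with the bilinear form $A$ recording the second fundamental form), and the bound (3) from the fact that on a $C^2$ manifold the Gauss map $y\mapsto T_y\Sigma_j$ is of class $C^1$, hence locally Lipschitz; passing to the approximate $\limsup$ along $M_j$ is harmless precisely because $M_j$ has full density at $x$.

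For the reverse direction, assume (1)--(3) hold $\Haus^m$-a.e.\ on each $M_j$. First I would use Egorov's and Lusin's theorems to pass, within each $M_j$, to countably many Borel subsets on which the approximate statements upgrade to \emph{uniform} ones: a uniform first-order Taylor estimate for the tangent planes, a uniform version of the Lipschitz bound (3) for $y\mapsto T_yM_j$, and a uniform quadratic approximation supplied by the paraboloids (2). On such a uniform piece $F$, fix a reference $m$-plane close to the tangent planes along $F$, write $F$ as a graph over that plane, and assemble a Whitney jet whose zeroth-order part is the graphing function, whose first-order part is prescribed by $T_yM_j$, and whose second-order part is prescribed by the form $A$ coming from $Q_yM_j$. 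The uniform version of (3) is exactly the estimate needed to check that the prescribed first-derivative field is Lipschitz, i.e.\ the compatibility hypothesis of the Whitney extension theorem (see \cite{Federer}) at the $C^{1,1}$ level. Applying the extension theorem yields a global $C^{1,1}$ function whose graph is a $C^{1,1}$ submanifold containing $F$; exhausting all of the $M_j$ by such pieces shows $\Haus^m\res M$ is $C^{1,1}$ $m$-rectifiable, and the equivalence cited above upgrades this to $C^2$ rectifiability.

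The main obstacle is the reverse direction's passage from the \emph{pointwise, measure-theoretic} hypotheses (1)--(3), phrased through approximate limits and a.e.\ existence, to the \emph{uniform} Taylor and Lipschitz estimates on a positive-measure subset that the Whitney extension theorem demands as input. The delicate issue is that the approximate $\limsup$ in (3) controls the variation of tangent planes only off a density-zero set attached to each $x$, and one must combine these exceptional sets coherently (via Egorov, after a preliminary measurability reduction) so that the resulting first-derivative field is genuinely Lipschitz on a large common subset; verifying that this field then satisfies the Whitney second-order compatibility estimate is the technical heart of the argument.
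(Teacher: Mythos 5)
The paper does not prove this theorem at all: it is quoted verbatim from Anzellotti--Serapioni \cite[Theorem 3.5]{AS94}, and the survey's only ``proof'' is that citation. So there is nothing in the paper to compare your argument against line by line; what I can say is that your two-step strategy is essentially the strategy of the cited source. The forward direction as you describe it is correct and routine: disjointify the covering manifolds, observe that $\Haus^m$-a.e.\ point of $M_j$ is a density point relative to $\Sigma_j$, so the approximate tangent plane and paraboloid of $M_j$ agree with the classical tangent plane and second-order Taylor paraboloid of the $C^2$ manifold, and (3) follows from local Lipschitz continuity of $y\mapsto T_y\Sigma_j$. The reverse direction via Egorov/Lusin reduction to uniform estimates, a Whitney extension at the $C^{1,1}$ level, and the upgrade $C^{1,1}\Leftrightarrow C^2$ from \cite[3.1.5]{Federer} is likewise the standard route.

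The one place where your sketch is genuinely thin, and which you should not leave at the level of ``this is the technical heart,'' is the verification of the \emph{first}-order Whitney compatibility estimate $|f(y)-f(x)-Df(x)(y-x)|\lesssim|y-x|^2$ on the graphed piece $F$. Hypothesis (1) only gives an $o(|y-x|)$ error, and hypothesis (3), being a Lipschitz bound on the derivative field, cannot simply be integrated from $x$ to $y$ because $F$ may be totally disconnected. This is exactly where hypothesis (2) must do real work: the weak convergence of $\Haus^m\res\xi_{x,\rho}(M_j)$ to $\Haus^m\res Q_xM_j$ under the anisotropic dilation (tangential scale $\rho^{-1}$, normal scale $\rho^{-2}$) is what pins the normal deviation of $F$ from $T_xM_j+$(quadratic graph of $A$) down to $o(|y-x|^2)$, and after an Egorov-type uniformization this supplies the quadratic remainder bound. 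Your jet as described prescribes $A$ as the second-order part but never explains how (2) feeds into the zeroth- and first-order compatibility conditions; without that, the Whitney hypothesis is unverified. With that point made explicit, the argument is a faithful reconstruction of the proof in \cite{AS94}.
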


More recently, two new characterizations of $C^{1,\alpha}$ rectifiability of absolutely continuous measures have been provided by Kolasi\'nski \cite{Kol} (using Menger-type curvatures) and by Ghinassi \cite{Ghinassi} (using $L^2$ beta numbers), the latter of which can be stated as follows. See \eqref{eq:beta-m} for the definition of $\beta_2^{m,h}(\mu,x,r)$.

\begin{theorem}[see {\cite[Theorem II]{Ghinassi}}] Let $\mu$ be a Radon measure on $\RR^n$ with $\mu\ll\Haus^m$. If $\lD{m}(\mu,x)>0$ at $\mu$-a.e.~$x\in\RR^n$ and \begin{equation}\label{e:silvia}\int_0^1 \frac{\beta_2^{m,h}(\mu,x,r)^2}{r^{2\alpha}} \frac{dr}{r}<\infty\quad\text{at $\mu$-a.e. }x\in\RR^n,\end{equation} then $\mu$ is $C^{1,\alpha}$ $m$-rectifiable.\end{theorem}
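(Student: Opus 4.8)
The plan is to upgrade the $C^1$ rectifiability that the hypotheses already guarantee to the quantitatively stronger $C^{1,\alpha}$ conclusion, by showing that the approximating tangent planes of $\mu$ vary in an $\alpha$-H\"older fashion and then integrating this plane field via a Whitney-type extension (equivalently, the David--Toro \cite{DT} parameterization with H\"older modulus). First I would observe that since $r^{-2\alpha}\ge 1$ for $r\in(0,1]$, condition \eqref{e:silvia} forces the unweighted Jones function $\int_0^1\beta_2^{m,h}(\mu,x,r)^2\,dr/r$ to be finite $\mu$-a.e.; together with $\lD{m}(\mu,x)>0$ and $\mu\ll\Haus^m$ this already yields $m$-rectifiability and a unique approximate tangent plane $P(x)$ at $\mu$-a.e.\ $x$, by the implication $(6)\Rightarrow(1)$ of Theorem \ref{t:assort} and Theorem \ref{t:equiv}. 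I would then localize: writing $\mu=h\,\Haus^m\res E$ by Radon--Nikodym and applying Egorov's and Lusin's theorems, for each small $\varepsilon>0$ I would extract a compact set $K$ with $\mu(K)>0$ on which (i) the tail $\int_0^{r_0}\beta_2^{m,h}(\mu,x,r)^2 r^{-2\alpha}\,dr/r<\varepsilon$ uniformly, and (ii) two-sided bounds $C^{-1}r^m\le\mu(B(x,r))\le C r^m$ hold for all $x\in K$, $0<r\le r_0$ (the upper bound from $\mu\ll\Haus^m$, the lower from $\lD{m}(\mu,x)>0$). Since $\mu$-a.e.\ point lies in such a $K$ for a suitable countable choice of parameters, it suffices to prove $\mu\res K$ is $C^{1,\alpha}$ $m$-rectifiable and then exhaust. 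On $K$ the homogeneous betas are comparable to the measure-normalized $L^2$ betas, so Lemma \ref{l:lerman} and its relatives apply.

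The geometric heart is the H\"older estimate for the planes. For $x\in K$ let $P(x,r)$ be a near-minimizing $m$-plane for $\beta_2^{m,h}(\mu,x,r)$. Single-scale comparisons, using (ii) to pass between consecutive scales and between nearby centers, give $d(P(x,r),P(x,r/2))\lesssim\beta_2^{m,h}(\mu,x,r)$ and $d(P(x,r),P(y,r))\lesssim\beta_2^{m,h}(\mu,x,Cr)+\beta_2^{m,h}(\mu,y,Cr)$ when $|x-y|\le r$. Telescoping and Cauchy--Schwarz against the weight then yield the key bound
\begin{equation*}
\sum_{2^{-k}\le\rho}\beta_2^{m,h}(\mu,x,2^{-k})\le\Big(\sum_k\beta_2^{m,h}(\mu,x,2^{-k})^2\,2^{2k\alpha}\Big)^{1/2}\Big(\sum_{2^{-k}\le\rho}2^{-2k\alpha}\Big)^{1/2}\lesssim\varepsilon^{1/2}\rho^{\alpha},
\end{equation*}
so that $d(P(x),P(x,\rho))\lesssim\rho^\alpha$ and, comparing at scale $\rho=|x-y|$, $d(P(x),P(y))\lesssim|x-y|^\alpha$ for $x,y\in K$. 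Thus $x\mapsto P(x)$ is an $\alpha$-H\"older $m$-plane field on $K$, and the same telescoping shows that $K$ is approximated near $x$ by $x+P(x)$ with error $O(r^{1+\alpha})$.

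Finally I would build the manifold. Fixing $x_0\in K$ and rotating so that $P(x_0)$ is horizontal, the previous step shows that near $x_0$ the set $K$ is the graph of a function $A$ over $P(x_0)$ satisfying the first-order Taylor bound $|A(y)-A(x)-L_x(y-x)|\lesssim|y-x|^{1+\alpha}$, where the linear maps $L_x$ recording the H\"older field $P(x)$ serve as prescribed derivatives. These are precisely the hypotheses of the Whitney extension theorem for $C^{1,\alpha}$ functions, which produces a global $C^{1,\alpha}$ function whose graph is an $m$-dimensional $C^{1,\alpha}$ submanifold containing $K$; equivalently, one feeds the summable weighted betas into the David--Toro Reifenberg construction \cite{DT} to obtain a $C^{1,\alpha}$ image. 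Running this over the countable family of good sets $K$ carries $\mu$-a.e.\ point by $C^{1,\alpha}$ submanifolds, as desired.

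The main obstacle is the conversion from the $L^2$-averaged flatness encoded by $\beta_2^{m,h}$ to the pointwise, first-order Taylor control needed to verify the Whitney hypotheses: the beta numbers bound the distance to a plane only in an integrated sense, so turning them into uniform ($L^\infty$) estimates at every point of $K$, and ensuring that the telescoped planes $P(x)$ genuinely act as the derivative of a \emph{single} $C^{1,\alpha}$ graph rather than an incoherent family, relies essentially on the two-sided density bounds (ii) and a careful coherence argument across scales and nearby points. This step, together with the uniformity of all constants on the compact good set, is where the real work lies; see \cite{Ghinassi} for the full details.
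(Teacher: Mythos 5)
This theorem is quoted in the survey with a citation to \cite{Ghinassi}; the paper supplies no proof of its own, so the only meaningful comparison is with the strategy of the cited source, and your outline does match it: reduce by Egorov/Lusin to compact pieces with two-sided density bounds and uniformly small weighted Jones function, control the oscillation of near-optimal planes by telescoping plus Cauchy--Schwarz against the weight $r^{-2\alpha}$ to produce an $\alpha$-H\"older plane field, and then realize the piece inside a $C^{1,\alpha}$ surface via a David--Toro type Reifenberg parametrization with H\"older estimates (Ghinassi's Theorem I). Your Cauchy--Schwarz computation and the identification of the main difficulty are both correct.

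One caveat on the step you rightly single out as the crux. If you convert the $L^2$ betas to pointwise distance estimates for individual points of $K$ by the naive Chebyshev argument (a point of $K$ at distance $t$ from the optimal plane at scale $r$ forces $\beta_2^{m,h}(\mu,x,Cr)^2\gtrsim (t/r)^{m+2}$ via the lower density bound), you only get $\dist(y,x+P(x))\lesssim |x-y|^{1+2\alpha/(m+2)}$, and the Whitney route as literally written would then yield $C^{1,2\alpha/(m+2)}$ rather than $C^{1,\alpha}$. The exponent is saved because points of $K$ with positive lower density and finite weighted Jones function are themselves limits of centers of mass along a telescoping chain of scales, and Lemma \ref{l:lerman} bounds each step of that chain \emph{linearly} in $\beta_2$; summing gives $\dist(x,L(x,r))=o(r^{1+\alpha})$ with no power loss, after which your Taylor and plane-coherence estimates go through with exponent $\alpha$. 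Equivalently, in the David--Toro construction the $C^{1,\alpha}$ regularity of the limit surface is driven entirely by the (linear-in-$\beta_2$) displacement of the planes, while the pointwise distance of $K$ to the intermediate surfaces only needs to be $o(r)$ to conclude containment in the limit. With that clarification your argument is sound and is, in substance, the proof in \cite{Ghinassi}.
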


It would be interesting to know whether Ghinassi's sufficient condition is also necessary for $C^{1,\alpha}$ rectifiability (compare to Theorem \ref{t:assort}(6)). The following problem is also open.

\begin{problem}[identification problem for $C^{k,\alpha}$ $m$-rectifiable measures] \label{p:higher} Find geometric or measure-theoretic properties that characterize $C^{k,\alpha}$ $m$-rectifiable measures when $k\geq 2$. (To start, you may assume $\mu\ll\Haus^m$.) \end{problem}

For recent progress on Problem \ref{p:higher} for Hausdorff measures, see Santilli \cite{Santilli}.

\subsection{Rectifiability in other spaces} Following Kirchheim \cite{Kirchheim}, a metric space $(X,\rho)$ is called \emph{$m$-rectifiable} if the $m$-dimensional Hausdorff measure on $X$ is carried by images of Lipschitz maps from subsets of $\RR^m$ into $X$. When $E\subset \RR^n$, the space $(E,d_2)$ equipped with induced Euclidean metric is $m$-rectifiable if and only if $\Haus^m\res E$ is Lipschitz image rectifiable in the sense of \S3. Kirchheim examined the structure of general $m$-rectifiable metric spaces and proved that for those spaces with $\Haus^m(X)<\infty$, the $m$-dimensional Hausdorff density exists and equals 1 at $\Haus^m$ almost every point.

%\begin{theorem}[see {\cite[Theorem 9]{Kirchheim}}] If a metric space $(X,\rho)$ is $m$-rectifiable and $\Haus^m(X)<\infty$, then for $\Haus^m$-a.e. $x\in X$, there exists a norm $\|\cdot\|_X$ on $\RR^m$, a map $\phi_x:X\rightarrow\RR^m$, and a closed set $A_X\subset X$ such that $\phi_x(x)=0$, $$\lim_{r\downarrow 0} \frac{\Haus^m(A_X\cap B(x,r))}{\omega_m r^m}=1\quad\text{and}$$ $$\lim_{r\downarrow 0} \sup\left\{\left|1-\frac{\|\phi_x(y)-\phi_x(z)\|_X}{\rho(y,z)}\right|:y,z\in A_X\cap B_\rho(x,r), y\neq z\right\}=0.$$
%\end{theorem}

\begin{theorem}[see \cite{Kirchheim}] \label{t:kirk} Let $(X,\rho)$ be a metric space and assume that $\Haus^m(X)<\infty$ for some integer $m\geq 1$. If $X$ is $m$-rectifiable, then \begin{equation}\label{e:d1} \lim_{r\downarrow 0} \frac{\Haus^m(B_\rho(x,r))}{\omega_m r^m}=1\quad\text{at $\Haus^m$-a.e. }x\in X.\end{equation}\end{theorem}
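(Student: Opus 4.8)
The plan is to exploit the rectifiable structure to push the density computation onto a finite-dimensional normed space, where the limit will turn out to equal exactly $1$ for a norm-independent reason (the isodiametric inequality). Throughout, write $\mu=\Haus^m\res X$, and recall that $X$ being $m$-rectifiable means $\mu$ is carried by countably many Lipschitz images $f_i(A_i)$ with $A_i\subseteq\RR^m$.

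First I would establish the structural reduction. The key tool is Kirchheim's metric differentiability theorem (the metric analogue of Rademacher's theorem): each Lipschitz $f_i$ is metrically differentiable at $\mathscr{L}^m$-a.e.\ $a\in A_i$, i.e.\ there is a seminorm $N^i_a$ on $\RR^m$ with $\rho(f_i(a'),f_i(a''))=N^i_a(a'-a'')+o(|a'-a|+|a''-a|)$ as $a',a''\to a$. Where $N^i_a$ is degenerate the image is $\Haus^m$-null (by the metric area formula), so after discarding a $\mu$-null set I may assume every $N^i_a$ is a genuine norm; a Lusin-type argument then decomposes $X$, modulo a $\mu$-null set, into countably many disjoint Borel pieces $X_i$, each bi-Lipschitz equivalent to some $A\subseteq\RR^m$ via a chart $g$ whose metric derivative $N_a:=\mathrm{md}\,g_a$ is a norm at a.e.\ $a$. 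Because density is a local $\mu$-a.e.\ notion and the pieces are mutually singular, at $\mu$-a.e.\ $x\in X_i$ the other pieces contribute only $o(r^m)$ to $\mu(B_\rho(x,r))$ (a Besicovitch differentiation argument, transferred to $\RR^m$ through the bi-Lipschitz charts where Besicovitch's covering theorem is available). Hence it suffices to compute, for a single chart $g:A\to X_i$, the limit of $\Haus^m(X_i\cap B_\rho(x,r))/(\omega_m r^m)$ at $\Haus^m$-a.e.\ $x=g(a)$.

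Next I would carry out the local computation in a chart. Let $\nu$ be the pullback measure on $A$, $\nu(E)=\Haus^m(g(E))$; by the metric area formula $\nu=\lambda_{N_\bullet}\,\mathscr{L}^m\res A$, where $\lambda_N$ is the Jacobian of the norm $N$, that is, the constant with $\Haus^m_N=\lambda_N\mathscr{L}^m$ on the normed space $(\RR^m,N)$. I would fix a point $a$ that is simultaneously a point of (nondegenerate) metric differentiability and a Lebesgue point of the density $\lambda_{N_\bullet}$. Metric differentiability forces $g(A)\cap B_\rho(g(a),r)=g\big(\{a':N_a(a'-a)\le r(1+o(1))\}\cap A\big)$, so $\Haus^m(X_i\cap B_\rho(g(a),r))=\nu\big(B_{N_a}(a,r)\big)(1+o(1))$; since $N_a$-balls form a regular differentiation basis, Lebesgue differentiation of $\nu$ at $a$ yields
$$\lim_{r\downarrow 0}\frac{\Haus^m(X_i\cap B_\rho(g(a),r))}{\omega_m r^m}=\frac{\lambda_{N_a}\,\mathscr{L}^m(B_{N_a}(0,1))}{\omega_m}=\frac{\Haus^m_{N_a}(B_{N_a}(0,1))}{\omega_m}.$$
Thus the density at a.e.\ point equals the density, at one of its own points, of the intrinsic Hausdorff measure of the tangent normed space.

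Finally I would prove the key lemma that closes the argument: for every norm $N$ on $\RR^m$, $\Haus^m_N(B_N(0,1))=\omega_m$. The upper bound $\Haus^m_N\le(\omega_m/\mathscr{L}^m(B_N(0,1)))\,\mathscr{L}^m$ follows from a Vitali cover of any set by small $N$-balls. The reverse inequality is the isodiametric inequality in $(\RR^m,N)$, namely $\mathscr{L}^m(S)\le\mathscr{L}^m(B_N(0,\tfrac12\diam_N S))$, which I would obtain by applying Brunn--Minkowski to the difference body: $2^m\mathscr{L}^m(S)\le\mathscr{L}^m(S-S)$ while $S-S\subseteq B_N(0,\diam_N S)=2B_N(0,\tfrac12\diam_N S)$. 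Together these force $\Haus^m_N=(\omega_m/\mathscr{L}^m(B_N(0,1)))\,\mathscr{L}^m$, hence $\Haus^m_N(B_N(0,1))=\omega_m$ and density $1$ everywhere on $(\RR^m,N)$. The heaviest obstacle is the structural step built on metric differentiability and the metric area formula---these are the substantial analytic inputs---whereas the conceptual surprise, that the limiting constant is norm-independent and equal to $1$ rather than some $\lambda_N$-dependent quantity, is precisely the isodiametric identity of the last step; it is the crux that makes the density exactly $1$.
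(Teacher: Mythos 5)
The paper states this theorem only with a citation to Kirchheim and supplies no proof of its own; your argument is a correct reconstruction of Kirchheim's original one --- metric differentiability of Lipschitz maps into metric spaces plus the metric area formula to reduce to bi-Lipschitz charts with norm-valued metric derivatives, Lebesgue differentiation in the chart, and the isodiametric inequality for an arbitrary norm (via Brunn--Minkowski applied to the difference body) to identify the limiting constant as exactly $1$ independently of the tangent norm. This matches the cited proof in both structure and the key lemma, so nothing further is needed.
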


When $(X,\rho)=(E,d_2)$, where $E\subset\RR^n$ is equipped with the Euclidean metric and $\Haus^m(E)<\infty$, the converse of Theorem \ref{t:kirk} is also true. This was proved by Besicovitch \cite{Bes28} when $m=1$ and $n=2$ and by Mattila \cite{Mattila75} for general $m$ and $n$. The converse of Theorem \ref{t:kirk} is also true when $m=1$ for general metric spaces with $\Haus^1(X)<\infty$  by Preiss and Ti\v{s}er \cite{PT92} (recall Example \ref{ex:bes} above). In all other cases it is not presently known whether the converse of Kirchheim's theorem is true or false.

\begin{problem}\label{p:kirk} Let $m\geq 2$. Prove that for every metric space $(X,\rho)$ with $\Haus^m(X)<\infty$ that \eqref{e:d1} implies $X$ is $m$-rectifiable. Or find a counterexample.\end{problem}

Problem \ref{p:kirk} is interesting even in the case when $(X,\rho)=(E,d_p)$ for some $E\subset\RR^n$ and $d_p$ is the distance induced by the $p$-norm, $p\neq 2$. For related work on existence of densities of measures in Euclidean spaces with respect to non-spherical norms, see the series of papers by Lorent \cite{Lorent03,Lorent04,Lorent07}.

Although a density only characterization of rectifiable metric spaces remains illusive, a metric analysis characterization of rectifiable spaces has recently been established by Bate and Li \cite{Bate-Li}. \emph{Lipschitz differentiability spaces} were introduced by Cheeger \cite{Cheeger} and examined in depth by Bate \cite{Bate}. Roughly speaking, these are spaces that have a sufficiently rich curve structure to support a version of Rademacher's theorem; we refer the reader to \cite{Bate} for a detailed description and several characterizations of differentiability spaces. The following theorem is a simplified statement of Bate and Li's main result.

\begin{theorem}[see \cite{Bate-Li}] A metric space $(X,\rho)$ is $m$-rectifiable if and only if $\Haus^m\res X$ is carried by Borel sets $U\subset X$ such that \begin{itemize}
\item $(U,\rho,\Haus^m)$ is an $m$-dimensional Lipschitz differentiability space, and \item $0<\lD{m}(\Haus^m\res U,x)\leq \uD{m}(\Haus^m\res U,x)<\infty$ at $\Haus^m$-a.e. $x\in U$.\end{itemize}\end{theorem}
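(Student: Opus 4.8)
The plan is to prove the two implications separately, treating the forward direction (rectifiable $\Rightarrow$ the two bulleted conditions) as essentially a repackaging of known structure theory, and reserving the bulk of the effort for the reverse direction, which carries the genuinely new content. Throughout I would reduce to the case $\Haus^m(X)<\infty$ by a standard exhaustion and work $\Haus^m$-almost everywhere, so that ``carried by Borel sets $U$'' lets me freely discard null sets and decompose $X$ into countably many measurable pieces.

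For the forward direction, suppose $(X,\rho)$ is $m$-rectifiable. By Kirchheim's structure theory \cite{Kirchheim} for rectifiable metric spaces (the metric differentiation and area formula underlying Theorem \ref{t:kirk}), $\Haus^m\res X$ is carried, up to a null set, by countably many pieces $U_j$ each bi-Lipschitz equivalent to a subset of $\RR^m$. On each $U_j$ the density equality \eqref{e:d1} of Theorem \ref{t:kirk} gives $\lim_{r\downarrow 0}\Haus^m(B_\rho(x,r))/(\omega_m r^m)=1$ for $\Haus^m$-a.e.\ $x$, which in particular yields the two-sided bound $0<\lD{m}(\Haus^m\res U_j,x)\leq\uD{m}(\Haus^m\res U_j,x)<\infty$. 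The Lipschitz differentiability structure on $U_j$ is inherited from $\RR^m$: a bi-Lipschitz image of $(\RR^m,|\cdot|,\mathscr{L}^m)$ is an $m$-dimensional Lipschitz differentiability space because Rademacher's theorem transfers through the bi-Lipschitz chart and the differentiability property is invariant under bi-Lipschitz maps and mutually comparable measures. So the forward direction reduces to citing Kirchheim plus the transfer of differentiability through bi-Lipschitz equivalences.

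The reverse direction is where the work lies. Assume $\Haus^m\res X$ is carried by Borel sets $U$ that are $m$-dimensional Lipschitz differentiability spaces satisfying the two-sided density bound; I may assume $X=U$ is a single such piece. The starting point is Bate's theory \cite{Bate}: an $m$-dimensional Lipschitz differentiability space carries $m$ \emph{independent} Alberti representations, i.e.\ $\Haus^m\res U$ decomposes into integrals of $1$-dimensional Hausdorff measure along Lipschitz curve fragments whose $\phi$-velocities span $\RR^m$ at $\Haus^m$-a.e.\ point, where $\phi\colon U\to\RR^m$ is a differentiability chart. My plan is to use these representations, together with the density bounds, to show that $\phi$ is bi-Lipschitz on pieces that exhaust $\Haus^m$-almost all of $U$; bi-Lipschitz equivalence to subsets of $\RR^m$ then immediately gives Lipschitz-image $m$-rectifiability. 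The upper density bound makes $\Haus^m\res U$ upper Ahlfors-regular, so the metric area formula for the Lipschitz chart $\phi$ controls $\phi$ from above and bounds its multiplicity. The lower density bound is used in the opposite direction: it forces the curve fragments of the Alberti representations to have speed bounded away from zero and prevents $\phi$ from collapsing distance, which together with the independence of the $m$ directions yields a lower bound $\rho(x,y)\gtrsim|\phi(x)-\phi(y)|$ on a set of positive measure.

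The hard part will be extracting this bi-Lipschitz lower bound, i.e.\ showing that the abstract differentiable structure is genuinely ``Euclidean'' rather than that of a fractal differentiability space (such as a Laakso-type space, whose Hausdorff dimension exceeds its chart dimension). The density condition $0<\lD{m}(\Haus^m\res U,x)\leq\uD{m}(\Haus^m\res U,x)<\infty$ is precisely what excludes these pathologies: it pins the Hausdorff dimension to $m$ and rules out the dimension gap that allows non-rectifiable differentiability spaces to exist. Concretely, I would pass to tangents, blowing up $\Haus^m\res U$ at a typical point and analyzing $\mathrm{Tan}(\Haus^m\res U,x)$, and argue that the independent Alberti representations survive the blow-up while the two-sided density bound forces every tangent measure to be a constant multiple of $\Haus^m$ on an $m$-plane; a.e.\ flatness of tangents then upgrades to bi-Lipschitz rectifiability. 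Managing the interaction between the measurable, almost-everywhere-defined Alberti and chart data and the metric blow-up, and quantifying the non-degeneracy uniformly on a positive-measure piece so that one can iterate the extraction to cover $\Haus^m$-almost all of $X$, is the central technical difficulty.
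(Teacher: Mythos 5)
A preliminary but important observation: the paper does not prove this theorem. It is a survey item, stated as ``a simplified statement of Bate and Li's main result'' with the citation \cite{Bate-Li} standing in for the argument, so there is no in-paper proof to compare your attempt against; the only meaningful benchmark is the Bate--Li paper itself. Measured against that, your forward direction is fine and essentially complete: Kirchheim's decomposition of an $m$-rectifiable metric space into countably many bi-Lipschitz images of subsets of $\RR^m$, the density conclusion \eqref{e:d1} of Theorem \ref{t:kirk}, and the bi-Lipschitz invariance of the Lipschitz differentiability property together yield both bullets.

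The genuine gap is the reverse direction, which you correctly identify as the heart of the matter but do not actually carry out. Asserting that the two-sided density bound ``is precisely what excludes'' Laakso-type pathologies and ``pins the Hausdorff dimension to $m$'' describes the conclusion to be reached, not an argument for it. Concretely: (i) the persistence of the $m$ independent Alberti representations and of the non-degeneracy of the chart $\phi$ under blow-up requires a careful simultaneous choice of generic points (density points for the chart, the representations, and the density ratios at once), and this is where the bulk of the technical work in \cite{Bate-Li} lives; (ii) your claim that every tangent measure is a constant multiple of $\Haus^m$ on an $m$-plane is too strong in a general metric space --- the best one can hope for is that tangents are $m$-dimensional normed spaces, and one must then invoke Kirchheim's area/Jacobian machinery to convert ``Banach tangents with the correct $\Haus^m$-density'' into bi-Lipschitz charts; and (iii) the passage from almost-everywhere good tangent behavior back to a countable exhaustion of $X$ by bi-Lipschitz pieces is itself a nontrivial extraction argument that you name as the central difficulty but leave unaddressed. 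As written, the proposal is a plausible road map of the Bate--Li strategy rather than a proof; the lower-density mechanism that rules out non-rectifiable differentiability spaces is exactly the step that still needs to be supplied.
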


A metric space $(X,\rho)$ is called \emph{purely $m$-rectifiable} if $\Haus^m\res X$ is singular to images of Lipschitz maps from subsets of $\RR^m$ into $X$. For example, the (first) Heisenberg group $\mathbb{H}$ with topological dimension 3 and Hausdorff dimension 4 is purely $m$-unrectifiable for all $m=2,3,4$ (see Ambrosio and Kirchheim \cite[\S7]{AK}). A notion of intrinsic rectifiability of sets in Heisenberg groups (i.e.~ rectifiability with respect to $C^1$ images of homogeneous subgroups) was investigated by Mattila, Serapioni, and Serra Cassano \cite{MSSC}. For related developments, see \cite{CT15} and \cite{CFO}.
A characterization of complete, purely $m$-unrectifiable metric spaces with $\Haus^m(X)<\infty$ was recently announced by Bate \cite{Bate-pu}. A related sufficient condition for rectifiability of a metric space (using Bate's theorem) has been announced by David and Le Donne \cite{DL-rect}.

\bibliography{ident}{}
\bibliographystyle{amsbeta}

\end{document}